\newcommand\N{{\mathbb N}}
\newcommand\R{{\mathbb R}}
\newcommand\C{{\mathbb C}}
\newcommand\EEE{{\bf E}}
\def\AA{{\mathcal A}}
\def\BB{{\mathcal B}}
\def\CC{{\mathcal C}}
\def\DD{{\mathcal D}}
\def\EE{{\mathcal E}}
\def\FF{{\mathcal F}}
\def\JJ{{\mathcal J}}
\def\LL{{\mathcal L}}
\def\NN{{\mathcal N}}
\def\OO{{\mathcal O}}
\def\RR{{\mathcal R}}
\def\TT{{\mathcal T}}
\def\UU{{\mathcal U}}
\def\BBB{{\mathscr{B}}}
\def\LLL{{\mathscr{L}}}
\def\eps{{\varepsilon}}
\newtheorem{theo}{Theorem}
\newtheorem{prop}[theo]{Proposition}
\newtheorem{lem}[theo]{Lemma}
\newtheorem{cor}[theo]{Corollary}
\newcommand{\beqn}{\begin{equation}}
\newcommand{\eeqn}{\end{equation}}
\newcommand{\bear}{\begin{eqnarray}}
\newcommand{\eear}{\end{eqnarray}}
\newcommand{\bean}{\begin{eqnarray*}}
\newcommand{\eean}{\end{eqnarray*}}
\def\Nt{|\hskip-0.04cm|\hskip-0.04cm|}
\newcommand{\Black}{\color{black}}
\begin{document}

\title[Discrete, fractional \& classical Fokker-Planck equations]{Uniform semigroup spectral analysis of the discrete, fractional  \& classical Fokker-Planck equations}

\date\today
 
\author[S. Mischler]{St\'{e}phane Mischler}
\address[S. Mischler]{Universit\'e Paris Dauphine \& IUF, Ceremade (UMR 7534), Place du Mar\'echal de Lattre de Tassigny, 75775 Paris Cedex 16 France.}
\email{mischler@ceremade.dauphine.fr}

\author[I. Tristani]{Isabelle Tristani}
\address[I. Tristani]{\'{E}cole Polytechnique, Centre de MathÃÂ©matiques Laurent Schwartz UMR 7640, 91128 Palaiseau Cedex,
France.}
\email{isabelle.tristani@polytechnique.edu}

\begin{abstract}
In this paper, we investigate the spectral analysis and long time asymptotic convergence of semigroups associated to discrete, fractional and classical Fokker-Planck equations in some regime where the corresponding operators are close.  
We successively deal with the discrete and the classical Fokker-Planck model, the fractional and the classical Fokker-Planck model and  finally the fractional and the classical Fokker-Planck model. In each case, we prove uniform spectral estimates using perturbation and/or enlargement arguments.
%
\end{abstract}

\maketitle


\bigskip
\textbf{Keywords}: Fokker-Planck equation; fractional Laplacian; spectral gap; exponential rate of convergence; long-time asymptotic; semigroup; dissipativity.

 \smallskip

\textbf{AMS Subject Classification}: 47G20 Integro-differential operators; 35B40 Asymptotic behavior of solutions; 35Q84 Fokker-Planck equations.

%
%
%
%
%
	
\vspace{0.3cm}



\bigskip


\tableofcontents



\section{Introduction} 
\label{sec:intro}
\setcounter{equation}{0}
\setcounter{theo}{0}
 
 \subsection{Models and main result}
In this paper, we are interested in the spectral analysis and the long time asymptotic convergence of semigroups associated to some discrete, fractional and classical Fokker-Planck equations.
They are simple models for describing the time evolution of a density function $f = f(t,x)$, $t \ge 0$, $x \in \R^d$, of particles 
undergoing both diffusion and (harmonic) confinement mechanisms and write
\beqn\label{eq-intro:dtf=Lambdaf}
 \partial_t f = \Lambda f = \DD f  + \hbox{div}(xf), \quad f(0) = f_0.
\eeqn
The diffusion term may either be a discrete diffusion 
 $$
\DD f = \Delta_\kappa  f :=  \kappa * f -  \|Â \kappa \|_{L^1} f, 
$$
 for a convenient (at least nonnegative and symmetric) kernel $\kappa$. It can also be a fractional diffusion  
\bear \label{eq-intro:opfrac}
(\DD f) (x) &=&  -(-\Delta)^{\alpha \over 2} f(x) 
\\ \nonumber
&:= &
c_\alpha \int_{\R^d} {f(y) - f(x) -\chi(x-y) (x-y)  \cdot \nabla f (x) \over |x-y|^{d+\alpha}} \, dy, 
\eear
with  $\alpha \in (0,2)$, $\chi \in \DD(\R^d)$ radially symmetric satisfying the inequality $\mathds{1}_{B(0,1)}Â \le \chi \le \mathds{1}_{B(0,2)}$, and a convenient normalization constant $c_\alpha > 0$. It can finally be the classical diffusion
$$
\DD f = \Delta f := \sum_{i=1}^d \partial^2_{x_ix_i} f.
$$
The main features of these equations are (expected to be) the same: they are mass preserving, namely 
$$
\langle f(t) \rangle = \langle f_0 \rangle, \quad \forall \, t \ge 0,  \quad \langle f \rangle := \int_{R^d} f \, dx, 
$$
positivity preserving, have
a unique positive stationary state with unit mass and that  stationary state is exponentially stable, in particular
\beqn\label{eq-intro:fTO0atINFINITY}
f(t) \to 0 \quad\hbox{as}\quad t\to\infty, 
\eeqn
for any solution associated to an initial datum $f_0$ with vanishing mass.  Such results can be obtained using different tools
as the spectral analysis of self-adjoint operators, some (generalization of) Poincar\'e inequalities or logarithmic Sobolev inequalities 
as well as the Krein-Rutman theory for positive semigroup.  
 
  \smallskip
 The aim of this paper is to initiate a kind of unified treatment of the above generalized Fokker-Planck equations and more importantly to establish that the convergence
 \eqref{eq-intro:fTO0atINFINITY} is exponentially fast uniformly with respect to the diffusion term for a large class of initial data which are taken in a 
 fixed weighted Lebesgue or weighted Sobolev space $X$.  
 
 \smallskip
 We investigate three regimes where these diffusion operators are close and for which 
 such a uniform convergence can be established.  In Section \ref{sec:DFP-FP}, we first consider  the case when the diffusion operator is discrete 
$$
\DD f  = \DD_\eps f := \Delta_{\kappa_\eps} f, \quad \kappa_\eps :=  {1 \over \eps^2}  \, k_\eps , 
$$
where $k$ is a nonnegative, symmetric, normalized, smooth and decaying fast enough  kernel and where we use the notation $k_\eps (x) = k(x/\eps)/\eps^d$, $\eps>0$. 
In the limit $\eps \to 0$, one then recovers the classical diffusion operator $\DD_0 = \Delta$. 
 
\smallskip
In Section \ref{sec:FFP-FP}, we next consider the case when the diffusion operator is fractional 
$$
\DD f  = \DD_\eps f  := - (-\Delta)^{(2-\eps)/2} f, \quad \eps \in (0,2),
$$
so that in the limit $\eps \to 0$ we also recover the classical diffusion operator $\DD_0 = \Delta$. 

 \smallskip
In Section  \ref{sec:DFFP-FFP}, we finally consider the case when the diffusion operator is a discrete version of the fractional diffusion, namely 
$$
\DD f =  \DD_\eps f  := \Delta_{ \kappa_\eps} f  , 
$$
where $(\kappa_\eps)$ is a family of convenient bounded kernels which converges towards the kernel of the fractional diffusion operator $k_0 := c_\alpha \, |Â \cdot |^{-d-\alpha}$ for some fixed $\alpha \in (0,2)$, in particular, in the limit $\eps \to 0$, one may recover the fractional  diffusion operator $\DD_0 =-(- \Delta)^{\alpha/2}$. 
 
 In order to write a rough version of our main result, we introduce some notation. We define the weighted Lebesgue space $L^1_r$, $r \ge 0$,  as the space of measurable functions $f$ such that  $f \, \langle x \rangle^r \in L^1$, where $\langle x \rangle^2 := 1 + |x|^2$.  For any $f_0 \in L^1_r$, we denote as 
$f(t)$ the solution to the generalized Fokker-Planck equation \eqref{eq-intro:dtf=Lambdaf} with initial datum $f(0) = f_0$ and then we define the semigroup $S_{\Lambda}$ on $X$ by setting $S_\Lambda (t) f_0 := f(t)$. 
 
 \begin{theo}[rough version]\label{theo:RoughVersion} 
 There exist $q > 0$ and $\eps_0 \in (0,2)$ such that for any $\eps \in [0,\eps_0]$, the semigroup $S_{\Lambda_\eps}$ is well-defined on $X := L^1_r$ and there exists a unique positive and normalized stationary solution $G_\eps$ to  \eqref{eq-intro:dtf=Lambdaf}. Moreover, there exist $a < 0$ and $C \ge 1$ such that for any $f_0 \in X$, there holds 
\beqn\label{eq:theo:RoughVersion}
 \|S_{\Lambda_\eps} (t) f_0 -G_\eps \langle f_0 \rangle \|_X \le C \, e^{at} Â \, \|Â f_0 - G_\eps \langle f_0 \rangle \|_X, \quad \forall \, t \ge 0.
\eeqn
 \end{theo}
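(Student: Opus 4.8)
The plan is to treat each of the three regimes by the same abstract scheme: write $\Lambda_\eps = \AAA_\eps + \BBB_\eps$ where $\BBB_\eps$ is a dissipative operator whose semigroup enjoys good decay in a small space and where $\AAA_\eps$ is a suitably regularizing (bounded, confining) operator, then apply the enlargement / factorization theory for spectral gaps of semigroups (in the spirit of Gualdani--Mischler--Mouhot and Mischler--Mouhot). Concretely, I would first establish, uniformly in $\eps \in [0,\eps_0]$, the three structural ingredients that this theory demands: (i) the splitting $\Lambda_\eps = \AAA_\eps + \BBB_\eps$ with $\AAA_\eps$ bounded from $X=L^1_r$ into a smaller reference space $E$ (e.g.\ $L^1_{r'}$ with $r' > r$, or a weighted Sobolev space) with norm bounded independently of $\eps$; (ii) a uniform hypodissipativity estimate for $\BBB_\eps$ on $X$, namely $\|S_{\BBB_\eps}(t)\|_{X\to X}\le C e^{a't}$ with $a'<0$ and constants independent of $\eps$, obtained by a weighted $L^1$ estimate exploiting the confinement term $\diver(xf)$ and the smallness of the loss term coming from the diffusion kernel; and (iii) the convergence $\Lambda_\eps \to \Lambda_0$ in the sense of the relevant operator norms (e.g.\ $\|(\Lambda_\eps-\Lambda_0)\|_{E\to X}\to 0$), which is where the ``regime'' hypotheses on $\kappa_\eps$ or on $2-\eps$ enter.

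Granting these, the argument runs as follows. For $\eps=0$ the classical (or fractional) Fokker--Planck operator is already known to generate a positive semigroup on $X$ with a unique normalized equilibrium $G_0$ and a spectral gap: $\Sigma(\Lambda_0)\cap\{\mathrm{Re}\,\lambda > a\} = \{0\}$, the eigenvalue $0$ being simple with eigenspace $\mathbb{R}G_0$ and with spectral projector $\Pi_0 f = G_0\langle f\rangle$. The enlargement theorem then upgrades this to a spectral gap of $\Lambda_0$ on the \emph{large} space $X=L^1_r$ with the same $a$, using ingredients (i) and (ii) at $\eps=0$. Next, a uniform version of the enlargement argument together with the convergence in (iii) gives: for $\eps_0$ small enough and all $\eps\in[0,\eps_0]$, $\Lambda_\eps$ has in the half-plane $\{\mathrm{Re}\,\lambda > a\}$ exactly one eigenvalue, which — since mass conservation forces $0\in\Sigma(\Lambda_\eps)$ with $G_\eps$ in the kernel — must be $0$, simple, with projector $\Pi_\eps f = G_\eps\langle f\rangle$, and the resolvent $(\lambda-\Lambda_\eps)^{-1}$ is bounded on $\{\mathrm{Re}\,\lambda\ge a'\}\setminus\{0\}$ uniformly in $\eps$. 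Existence, positivity and uniqueness of $G_\eps$ come from Krein--Rutman applied to $S_{\Lambda_\eps}$ (positivity of the semigroup plus the spectral information just obtained); normalization is fixed by $\langle G_\eps\rangle=1$.

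Finally, the exponential decay estimate \eqref{eq:theo:RoughVersion} is obtained from the inverse Laplace / Dunford integral representation
\[
S_{\Lambda_\eps}(t)\,(\mathrm{Id}-\Pi_\eps) = \frac{1}{2i\pi}\int_{\mathrm{Re}\,\lambda = a}e^{\lambda t}\,(\lambda-\Lambda_\eps)^{-1}\,(\mathrm{Id}-\Pi_\eps)\,d\lambda,
\]
combined with the factorization $\Lambda_\eps = \AAA_\eps+\BBB_\eps$ to gain the integrability in $\lambda$ needed to make the contour integral converge (one writes $(\lambda-\Lambda_\eps)^{-1}$ as a finite sum of terms of the form $(\lambda-\BBB_\eps)^{-1}(\AAA_\eps(\lambda-\BBB_\eps)^{-1})^{n}$ plus a remainder controlled by the resolvent bound above). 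Tracking constants through this representation yields $\|S_{\Lambda_\eps}(t)(\mathrm{Id}-\Pi_\eps)\|_{X\to X}\le C e^{at}$ with $C\ge 1$ and $a<0$ independent of $\eps\in[0,\eps_0]$, which applied to $f_0-G_\eps\langle f_0\rangle = (\mathrm{Id}-\Pi_\eps)f_0$ is exactly \eqref{eq:theo:RoughVersion}.

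**Main obstacle.**
The delicate point is the \emph{uniformity in $\eps$} of the hypodissipativity of $\BBB_\eps$ and of the convergence $\Lambda_\eps\to\Lambda_0$, since in all three regimes the diffusion operator is genuinely singular in the limit: for the discrete model $\kappa_\eps = \eps^{-2}k_\eps$ has mass blowing up like $\eps^{-2}$, and for the fractional model $(-\Delta)^{(2-\eps)/2}$ degenerates to a second-order operator. One must choose the splitting (the truncation defining $\AAA_\eps$) carefully — typically by cutting the kernel on a ball whose radius is tuned to $\eps$ — so that $\AAA_\eps$ stays bounded $X\to E$ uniformly while the remaining loss in $\BBB_\eps$ is dominated, uniformly in $\eps$, by the confinement. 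Establishing this balance, and proving the operator-norm convergence $\AAA_\eps(\lambda-\BBB_\eps)^{-1}\to\AAA_0(\lambda-\BBB_0)^{-1}$ that feeds the perturbative spectral argument, is where the bulk of the technical work — and the model-specific estimates of Sections \ref{sec:DFP-FP}--\ref{sec:DFFP-FFP} — is needed.
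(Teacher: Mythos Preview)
Your proposal is correct and follows essentially the same architecture as the paper: the splitting $\Lambda_\eps=\AA_\eps+\BB_\eps$, uniform dissipativity of $\BB_\eps$, regularization through iterates $(\AA_\eps S_{\BB_\eps})^{(*n)}$, Krein--Rutman for the steady state, and a perturbation/enlargement combination for the uniform spectral gap.

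Two points where the paper's execution differs slightly from your sketch. First, the order is reversed in the discrete regimes: the paper runs the perturbation argument $\Lambda_\eps\to\Lambda_0$ in a \emph{small} Sobolev space ($H^3_{r_0}$ in Section~\ref{sec:DFP-FP}, $W^{2,p_\theta}(m_0)$ in Section~\ref{sec:DFFP-FFP}) where the loss of derivatives in $\|\Lambda_\eps-\Lambda_0\|_{H^{s+3}\to H^s}$ can be absorbed by the regularization, and only afterwards enlarges to $L^1_r$; your proposed order (enlarge at $\eps=0$, then perturb in $L^1_r$) would run into trouble because the convergence $\Lambda_\eps\to\Lambda_0$ is not available as a bounded-operator statement on $L^1_r$ itself. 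Second, for the fractional-to-classical regime (Section~\ref{sec:FFP-FP}) the paper does \emph{not} use perturbation at all: it exploits a uniform Poincar\'e-type estimate in $L^2(G_\alpha^{-1/2})$ (from Gentil--Imbert) and applies enlargement directly, which is simpler than tracking $\Lambda_\eps\to\Lambda_0$ there. Finally, your remark that the truncation radius in $\AA_\eps$ is ``tuned to $\eps$'' is not how Section~\ref{sec:DFP-FP} proceeds---there $M,R$ are fixed and all $\eps$-dependence sits in $k_\eps$---though something in that spirit does occur in the constraints $\eta\ge\eps$, $L\le 1/\eps$ of Section~\ref{sec:DFFP-FFP}.
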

 
Our approach is a semigroup approach in the spirit of the semigroup decomposition framework introduced by Mouhot in \cite{Mcmp} and developed 
 subsequently in \cite{MMcmp,GMM,FFP-IT,MM*,Mbook*}.  Theorem~\ref{theo:RoughVersion} generalizes to the discrete diffusion Fokker-Planck equation and to the discrete fractional diffusion Fokker-Planck equation similar results obtained for the classical Fokker-Planck equation in \cite{GMM,MM*} (Section~\ref{sec:DFP-FP}) and for the fractional one in \cite{FFP-IT} (Section~\ref{sec:DFFP-FFP}). It also makes uniform with respect to the fractional diffusion parameter  the convergence results obtained for the fractional diffusion equation in \cite{FFP-IT} (Section \ref{sec:FFP-FP}). 
 It is worth mentioning that there exists a huge literature on the long-time behaviour for the Fokker-Planck equation as well as (to a lesser extend) for the fractional Fokker-Planck equation. We refer to the references quoted in \cite{GMM,MM*,FFP-IT} for details. There also probably exist many papers on the discrete diffusion equation since it is strongly related to a standard random walk in $\R^d$, but we were not able to find any precise reference in this PDE context.

 \smallskip
\subsection{Method of proof}
 Let us explain our approach. First, we may associate a semigroup $S_{\Lambda_\eps}$ to the evolution equation~\eqref{eq-intro:dtf=Lambdaf} in many Sobolev spaces, 
 and that semigroup is mass preserving and positive. In other words, $S_{\Lambda_\eps}$ is a Markov semigroup and it is then expected that there exists a unique positive and unit mass steady state 
$G_\eps$ to the equation \eqref{eq-intro:dtf=Lambdaf}. Next, we are able to establish that the semigroup $S_{\Lambda_\eps}$ splits as 
\bear\label{eq-intro:SLambdaSplitting}
&& S_{\Lambda_\eps}  Â =  S^1_{\eps}  Â +  S^2_{\eps}, \quad
\\ \nonumber
&& S^1_\eps  Â  \approx e^{t T_\eps}, \,\, T_\eps \hbox{ finite dimensional}, 
\quad
S^2_\eps Â = \OO(e^{at}), \,\, a < 0,
\eear
in these many weighted Sobolev spaces. The above decomposition of the semigroup is the main technical issue of the paper. It is obtained by 
introducing a convenient splitting 
\beqn\label{eq-intro:Splitting}
\Lambda_\eps = \AA_\eps + \BB_\eps
\eeqn
where $\BB_\eps$ enjoys suitable dissipativity property and $\AA_\eps$ enjoys some suitable $\BB_\eps$-power regularity (a property that we introduce in Section~\ref{subsec:AisBpoweReg} (see also \cite{Mbook*}) and that we name in that way by analogy with the $\BB_\eps$-power compactness notion introduced by Voigt  \cite{Voigt80}). It is worth emphasizing that we are able to exhibit such a splitting with uniform (dissipativity, regularity) estimates
with respect to the diffusion parameter $\eps \in [0,\eps_0]$ in several weighted Sobolev spaces.  

As a consequence of \eqref{eq-intro:SLambdaSplitting}, we may indeed apply the Krein-Rutman theory developed in \cite{MS,Mbook*} and exhibit such a unique positive and unit mass steady state  $G_\eps$. 
Of course for the classical and fractional Fokker-Planck equations the steady state is trivially given through an explicit formula (the Krein-Rutman theory is useless in that cases). 
A next direct consequence of the above spectral and semigroup decomposition \eqref{eq-intro:SLambdaSplitting} is that 
there is a spectral gap in the spectral set $\Sigma(\Lambda_\eps)$ of the generator $\Lambda_\eps$, namely 
\beqn\label{eq-intro:SpectralGap}
\lambda_\eps := \sup \{Â \Re e \, \xi \in \Sigma(\Lambda_\eps) \backslash \{Â 0 \} \}Â < 0, 
\eeqn
and next that an exponential trend to the equilibrium can be established, namely 
\beqn\label{eq-intro:intro:SLtoG}
\|  S_{\Lambda_\eps}(t)  Â f_0  \|_X \le C_\eps \, e^{at} \, \|  f_0 \|_X \quad \forall \, t \ge 0, \,\,  Â \forall \, \eps \in [0,\eps_0], \,\, \forall \, a > \lambda_\eps, 
\eeqn
 for any  initial datum $f_0 \in X$ with vanishing mass. 
 
 \smallskip
 Our final step consists in proving that the spectral gap \eqref{eq-intro:SpectralGap} and the estimate \eqref{eq-intro:intro:SLtoG} are uniform with respect to $\eps$, more precisely, 
 there exists $\lambda^* < 0$ such that $\lambda_\eps \le \lambda^*$ for any $\eps \in [0,\eps_0]$ and $C_\eps$ can be chosen independent to $\eps \in [0,\eps_0]$. 

 \smallskip
 A first way to get such uniform bounds is just to have in at least one Hilbert space $E_\eps \subset L^1(\R^d)$ the estimate 
 $$
 \forall \, f \in \DD(\R^d), \,\, \langle f \rangle = 0, \quad 
( \Lambda_\eps f, f )_{E_\eps} \le \lambda^* \|Â f \|^2_{E_\eps}, 
$$
 and then \eqref{eq-intro:intro:SLtoG} essentially follows from the fact that the splitting \eqref{eq-intro:Splitting} holds with operators which are uniformly bounded with respect to $\eps \in [0,\eps_0]$. It is the strategy we use in the case of the fractional diffusion (Section~\ref{sec:FFP-FP}) and the work has already been made in \cite{FFP-IT} except for the simple but fundamental observation that the fractional diffusion operator is uniformly bounded (and converges to the classical diffusion operator) when it is suitable (re)scaled. 
 
 \smallskip
 A second way to get the desired uniform estimate is to use a perturbation argument. Observing that, in the discrete cases (Sections \ref{sec:DFP-FP} and \ref{sec:DFFP-FFP}),
 $$
 \forall \, \eps \in [0,\eps_0], \quad \Lambda_\eps - \Lambda_{0}Â = \OO (\eps),
 $$
for a suitable operator norm, we are able to deduce that $\eps \mapsto \lambda_\eps$ is a continuous function at $\eps=0$, from which we readily conclude. 
We use here again that the considered models converge to the classical or the fractional Fokker-Planck equation. 
In other words, the discrete models can be seen as (singular) perturbations of the limit equations and our analyze
takes advantage of such a property in order to capture the asymptotic behaviour of the related spectral objects (spectrum, spectral projector) and to conclude 
to the above uniform spectral decomposition. This kind of perturbative method has been introduced in \cite{MMcmp} and improved in \cite{Granular-IT*}. In Section \ref{sec:DFFP-FFP}, we give a new and improved version of the abstract perturbation argument where some dissipativity assumptions are relaxed with respect to \cite{Granular-IT*} and  only required to be satisfied on the limit operator ($\eps=0$).

\smallskip
\subsection{Comments and possible extensions}
~~\\
\noindent
{\sl Motivations.} The main motivation of the present work is rather theoretical and methodological. Spectral gap and semigroup estimates in large Lebesgue spaces
have been established both for Boltzmann like equations and Fokker-Planck like equations in a series of recent papers \cite{Mcmp,MMcmp,GMM,MS,EM*,CM*,FFP-IT,MM*,MQT*}. The proofs  are based on a splitting of the generator method as here and previously explained,  but the appropriate splitting are rather different for the two kinds of models. 
The operator $\AA_\eps$ is a multiplication ($0$-order) operator  for a Fokker-Planck equation while it is an integral ($-1$-order) operator for a Boltzmann equation.
More importantly, the fundamental and necessary regularizing effect is given by the action of the semigroup $S_{\BB_\eps}$ for the Fokker-Planck equation while it is given by the action of the operator $\AA_\eps$ for the Boltzmann equation. Let us underline here that in Section \ref{sec:DFFP-FFP}, we exhibit a new splitting for fractional diffusion Fokker-Planck operators (different from the one introduced in \cite{FFP-IT}) in the spirit of Boltzmann like operators (the operator $\AA_\eps$ is an integral operator whereas it was a multiplication operator in  \cite{FFP-IT} and in Section~\ref{sec:FFP-FP}). Our purpose is precisely to show that all these equations can be handled in the same framework, by exhibiting a suitable and compatible splitting 
\eqref{eq-intro:Splitting} which does not blow up  and such that the time indexed family of operators $\AA_\eps S_{\BB_\eps}$ (or some iterated 
convolution products of that one) have a good regularizing property which is uniform in the singular limit  $\eps \to 0$. 
  
\medskip\noindent
{\sl Probability interpretation.} The discrete and fractional Fokker-Planck equations are the evolution equations satisfied by the law of the stochastic process which is solution to the SDE
\beqn\label{eq:EDS}
dX_t = - X_t dt - d\LLL^\eps_t, 
\eeqn
where $\LLL^\eps_t$ is the Levy (jump) process associated to $k_\eps/\eps^2$ or $c_\eps/|z|^{d+2-\eps}$. 
For two trajectories $X_t$ and $Y_t$ to the above SDE associated to some initial data $X_0$ and $Y_0$, 
and $p \in [1,2)$, we have
$$
d |X_t - Y_t|^p = - p  |X_t - Y_t|^p dt,
$$
from which we deduce
$$
\EEE(|X_t - Y_t|^p) \le e^{-pt}  Â \EEE(|X_0 - Y_0|^p), \quad \forall \, t \ge 0.
$$
We fix now $Y_t$ as a stable process for the SDE \eqref{eq:EDS}. 
Denoting by $f_\eps(t)$ the law of $X_t$ and $G_\eps$ the law of  $Y_t$, we classically deduce 
the Wasserstein distance estimate 
\beqn\label{eq-intro:intro:WpftG}
W_p(f_\eps(t),G_\eps) \le e^{-t}  Â \, W_p(f_0,G_\eps), \quad \forall \, t \ge 0.
\eeqn
In particular, for $p=1$, the Kantorovich-Rubinstein Theorem says that \eqref{eq-intro:intro:WpftG} is equivalent to the estimate \beqn\label{eq-intro:intro:W1ftG}
\|Â f_\eps(t) - G_\eps\|_{(W^{1,\infty}(\R^d))'} \le e^{-t}  Â \, \|Â f_0- G_\eps\|_{(W^{1,\infty}(\R^d))'}, \quad \forall \, t \ge 0.
\eeqn
Estimates \eqref{eq-intro:intro:WpftG} and \eqref{eq-intro:intro:W1ftG} have to be compared with \eqref{eq-intro:intro:SLtoG}. Proceeding in a similar way as in \cite{MS,MM*}
it is likely that the spectral gap estimate \eqref{eq-intro:intro:W1ftG} can be extended (by ``shrinkage of the space") to a weighted Lebesgue space framework 
and then to get the estimate in Theorem~\ref{theo:RoughVersion} for any $a \in (-1,0)$.

%

\medskip\noindent
{\sl Singular kernel and other confinement term.} We also believe that a similar analysis can be handle with more singular kernels than the ones considered here, 
the typical example should be $k(z) = (\delta_{-1} + \delta_1)/2$ in dimension $d=1$, and for confinement term different from the harmonic confinement considered here,
including other forces or discrete confinement term. In order to perform such an analysis one could use some trick developed in \cite{MS} in order to handle the 
equal mitosis (which uses one more iteration of the convolution product of the time indexed family of operators $\AA_\eps S_{\BB_\eps}$). 

\medskip\noindent
{\sl Linearized and nonlinear equations.} We also believe that a similar analysis can be adapted to nonlinear equations. The typical example we have in mind
is the Landau grazing collision limit of the Boltzmann equation. One can expect to get an exponential trend of solutions to its associated Maxwellian equilibrium 
which is uniform with respect to the considered model (Boltzmann equation with  and without Grad's cutoff and Landau equation).

\medskip\noindent
{\sl Kinetic like models.} A more challenging issue would be to extend the uniform asymptotic analysis to the Langevin SDE or the kinetic Fokker-Planck equation by using some idea developed in \cite{CM*} which make possible to connect (from a spectral analysis point of view) the parabolic-parabolic Keller-Segel 
equation to the parabolic-elliptic Keller-Segel equation. The next step should be to apply the theory to the Navier-Stokes diffusion limit of the (in)elastic Boltzmann equation. 
These more technical problems will be investigated in next works.

\smallskip
\subsection{Outline of the paper}
Let us describe the plan of the paper. In each section, we treat a family of equations in a uniform framework, from a spectral analysis viewpoint with a semigroup approach. 
In Section~\ref{sec:DFP-FP}, we deal with the discrete and classical Fokker-Planck equations.
Section~\ref{sec:FFP-FP} is dedicated to the analysis of the fractional and classical Fokker-Planck equations. Finally, Section~\ref{sec:DFFP-FFP} is devoted to the study of the discrete and fractional Fokker-Planck equations.

\smallskip
\subsection{Notations}
For a (measurable) moment function $m : \R^d \to \R_+$, we define the norms
$$
\| f \|_{L^p(m)} := \|Â f \, m \|_{L^p(\R^d)}, \quad \|Â f \|^p_{W^{k,p}(m)} := \sum_{i=0}^k \|Â \partial^i f\|^p_{L^p(m)}, \quad k\ge1,
$$
and the associated weighted Lebesgue and Sobolev spaces $L^p(m)$ and $W^{k,p}(m)$, we denote $H^k(m) = W^{k,2}(m)$ for $k \ge1$.
We also use the shorthand $L^p_r$ and $W^{1,p}_r$ for the Lebesgue and Sobolev spaces $L^p(\nu)$ and $W^{1,p}(\nu)$ when the weight~$\nu$ is defined as
$\nu(x) = \langle x \rangle^r$, $\langle x \rangle := (1 + |x|^2)^{1/2}$.

We denote by $m$ a polynomial weight $m(x):=\langle x \rangle^q$ with $q>0$, the range of admissible $q$ will be specified throughout the paper. 

In what follows, we will use the same notation $C$ for positive constants that may change from line to line. Moreover, the notation $A \approx B$ shall mean that there exist two positive constants $C_1$, $C_2$ such that $C_1 A \le B \le C_2 A$. 

\medskip
\paragraph{\bf Acknowledgments.} The research leading to this paper was (partially) funded by the French ``ANR blanche'' project Stab: ANR-12-BS01-0019. The second author has been partially supported by the fellowship l'Or\'{e}al-UNESCO {\it For Women in Science}.

\bigskip 
 \section{From discrete to classical Fokker-Planck equation }
\label{sec:DFP-FP}
\setcounter{equation}{0}
\setcounter{theo}{0}


\smallskip

 In this section, we consider a kernel $k \in W^{2,1}(\R^d) \, \cap  \, L^1_3(\R^d)$ which is symmetric, i.e. $k(-x)=k(x)$ for any $x \in \R^d$, satisfies the normalization condition
\beqn \label{prop-DtoC:k}
\int_{\R^d} k(x) \, \begin{pmatrix} 1 \\ x \\ x \otimes x  \end{pmatrix} \, dx = \begin{pmatrix} 1 \\ 0 \\  2 I_d 
 \end{pmatrix},
\eeqn
 as well as the positivity condition:  there exist $\kappa_0, \rho > 0$ such that 
\beqn \label{prop-DtoC:kbisbis}
k \ge \kappa_0 \, \mathds{1}_{B(0,\rho)}.
\eeqn

\smallskip
We define $k_\eps(x) := 1/\eps^d k(x/\eps)$, $x \in \R^d$ for $\eps>0$, and we consider the discrete and classical Fokker-Planck equations 
\beqn \label{eq-DtoC:FPD} 
\left\{\begin{aligned}
&\partial_t f = {1 \over \eps^2} (k_\eps * f - f) + \hbox{div}(xf) =: \Lambda_\eps f, \quad \eps>0,\\
&\partial_t f = \Delta f + \hbox{div}(xf) =: \Lambda_0 f.
\end{aligned}
\right.
\eeqn

\smallskip
The main result of the section reads as follows. 

\begin{theo}\label{theo-DtoC:DtoC} Assume $r>d/2$ and consider a symmetric kernel $k$ belonging to $W^{2,1}(\R^d) \cap L^1_{2r_0+3}$ where $r_0> \max(r+d/2,5+d/2)$   which satisfies \eqref{prop-DtoC:k} and \eqref{prop-DtoC:kbisbis}. 

(1) For any $\eps > 0$, there exists a positive and unit mass normalized steady state $G_\eps \in L^1_r(\R^d)$ to the discrete Fokker-Planck equation 
\eqref{eq-DtoC:FPD}. 

(2) There exist explicit constants $a_0<0$ and $\eps_0>0$ such that for any $\eps \in [0,\eps_0]$, the semigroup $S_{\Lambda_\eps}(t)$ associated to
the discrete Fokker-Planck equation~\eqref{eq-DtoC:FPD} satisfies: for any $f \in L^1_r$ and any $a>a_0$, 
$$
\| S_{\Lambda_\eps}(t) f - G_\eps  \langle f \rangle  \|_{L^1_r} \leq C_a \, e^{at} \, \| f- G_\eps \langle f \rangle\|_{L^1_r}, \quad \forall \, t \ge0, 
$$
for some explicit constant $C_a\ge1$. In particular, the spectrum $\Sigma (\Lambda_\eps)$ of $\Lambda_\eps$ satisfies the separation property  $\Sigma (\Lambda_\eps) \cap D_{a_0} = \{0\}$ in $L^1_r$,  where we have denoted $D_\alpha := \{ \xi \in \R^d; \,\, \Re e \,\xi > \alpha \}$.  
\end{theo}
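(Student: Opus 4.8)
The plan is to apply the abstract semigroup/spectral machinery of Mouhot--Mischler (the factorization and enlargement framework of \cite{Mcmp,GMM,MM*,Mbook*}) after exhibiting a suitable $\eps$-uniform splitting $\Lambda_\eps = \AA_\eps + \BB_\eps$. First I would work in the small Hilbert space $E := L^2(G_0^{-1/2})$ (the natural space for the classical Fokker--Planck operator $\Lambda_0$, where $G_0$ is the Gaussian), in which $\Lambda_0$ is self-adjoint with a spectral gap: $(\Lambda_0 f, f)_E \le -\|f\|_E^2$ on the mass-zero subspace by the Poincaré inequality for the Gaussian. The discrete operator $\Lambda_\eps$ is not self-adjoint, but I would show by a direct (Taylor expansion) estimate on the convolution term that $\frac1{\eps^2}(k_\eps * f - f) = \Delta f + \OO(\eps)$ in a suitable operator norm between weighted Sobolev spaces (this is where the moment assumption $k \in L^1_{2r_0+3}$, the normalization \eqref{prop-DtoC:k}, and $k \in W^{2,1}$ enter: the first two vanishing/matching moments kill the $0$th and $1$st order terms, the $x\otimes x$ moment produces exactly $\Delta$, and regularity/decay of $k$ controls the remainder). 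Hence $\Lambda_\eps \to \Lambda_0$ as $\eps \to 0$ in the appropriate $\BB(\text{large space},\text{small space})$ sense.

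Next I would construct the splitting. Write $\AA_\eps f := M\,\chi_R f$ for a large constant $M$ and a smooth truncation $\chi_R$ supported in $B(0,2R)$, and $\BB_\eps := \Lambda_\eps - \AA_\eps$. The point is:
\begin{itemize}
\item[(i)] $\BB_\eps$ is hypodissipative in each space $L^1_r$ (and in $E$, and in intermediate Sobolev spaces $W^{k,p}(m)$), uniformly in $\eps \in [0,\eps_0]$: for the drift term $\hbox{div}(xf)$ one integrates by parts to get a favorable term $-r \langle x\rangle^{r-2}|x|^2$ type contribution outside a ball, while the discrete diffusion term $\frac1{\eps^2}(k_\eps*f - f)$ is shown to be dissipative in $L^1_r$ by the standard convexity/Jensen estimate $\int |k_\eps * f|\,\langle x\rangle^r \le \int |f|\, (k_\eps * \langle \cdot\rangle^r)$ together with the moment bound $k_\eps * \langle x\rangle^r \le \langle x\rangle^r + C\eps^2\langle x\rangle^{r-2}+\dots$ (again using \eqref{prop-DtoC:k} to cancel the $\OO(1)$ and $\OO(\eps)$ terms so that the bad part is $\OO(1)$ relative to $\frac1{\eps^2}$, hence controlled). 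Choosing $M,R$ large makes the total drift $\le a_0 < 0$ on the mass-zero part.
\item[(ii)] $\AA_\eps$ is $\BB_\eps$-power-regular uniformly in $\eps$: the positivity condition \eqref{prop-DtoC:kbisbis} $k \ge \kappa_0 \mathds{1}_{B(0,\rho)}$ guarantees that iterated convolutions $k_\eps^{*n}$ spread mass, so that a sufficiently high convolution power of the time-indexed family $\AA_\eps S_{\BB_\eps}$ gains regularity (maps $L^1_r$ into $H^k_{r'}$ for some $k,r'$) with bounds uniform in $\eps$, as explained in the introduction. This is the analogue of the $\BB$-power compactness of \cite{Voigt80} and is the technical heart of the argument.
\end{itemize}

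Granting the splitting, the abstract theorems give: (a) $\Lambda_\eps$ generates a semigroup in each of the spaces $L^1_r$, $E$, etc., and these are consistent; (b) by Krein--Rutman theory (from \cite{MS,Mbook*}), since $S_{\Lambda_\eps}$ is positive and mass-preserving, there is a unique positive unit-mass steady state $G_\eps \in L^1_r$, and $0$ is a simple dominant eigenvalue — this proves part (1); (c) the factorization $S_{\Lambda_\eps} = \sum$ of a finite-rank piece $\approx e^{tT_\eps}$ and a piece that is $\OO(e^{a_0 t})$ holds in $L^1_r$. To finish part (2) I need the spectral gap to be \emph{uniform} in $\eps$, i.e. $\lambda_\eps \le a_0 < 0$ for all $\eps \in [0,\eps_0]$ with a common constant $a_0$ and common $C_a$. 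For this I would invoke the perturbative argument (from \cite{MMcmp}, improved in \cite{Granular-IT*}): since $\Lambda_\eps - \Lambda_0 = \OO(\eps)$ in the relevant operator norm and the limit operator $\Lambda_0$ has a genuine spectral gap (Poincaré in $E$), the eigenprojector and eigenvalue near $0$ depend continuously on $\eps$ at $\eps = 0$; combined with the fact that $0$ is always an exact eigenvalue (mass conservation) and the rest of $\Sigma(\Lambda_\eps) \cap D_{a_0}$ is finite and depends continuously on $\eps$, one concludes $\Sigma(\Lambda_\eps) \cap D_{a_0} = \{0\}$ for $\eps$ small, with uniform resolvent bounds, hence the stated uniform exponential decay.

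The main obstacle I expect is establishing the \emph{uniform} ($\eps$-independent) dissipativity and power-regularity estimates for the splitting when $\eps \to 0$: the diffusion term $\frac1{\eps^2}(k_\eps * f - f)$ is singular as $\eps\to 0$ and only its cancellations (encoded in \eqref{prop-DtoC:k}) keep it bounded in the correct sense, so every estimate must be done carefully enough to extract the $\eps$-uniform constant rather than a blow-up; in particular the regularization gain in (ii) must be shown not to degenerate as $\eps\to0$, which is why one passes to iterated convolution products $\AA_\eps S_{\BB_\eps} \ast \cdots \ast \AA_\eps S_{\BB_\eps}$ and uses the spreading provided by \eqref{prop-DtoC:kbisbis} quantitatively.
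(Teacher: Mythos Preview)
Your overall strategy (splitting + dissipativity + power-regularity + Krein--Rutman + perturbation from $\Lambda_0$ + enlargement) matches the paper. However, there is a genuine gap in the regularization step (your item (ii)), and it stems from the choice of splitting.

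You take $\AA_\eps f = M\chi_R f$ for all $\eps\ge 0$. This works for $\eps=0$ because the smoothing then comes from $S_{\BB_0}$, whose generator contains $\Delta$. But for any fixed $\eps>0$, the discrete diffusion $\frac{1}{\eps^2}(k_\eps*f-f)$ is a \emph{bounded} operator on $L^2$ (or $L^1_r$), so $S_{\BB_\eps}$ does not smooth: it will not map $L^2$ into $H^1$ at any positive time. Hence no iterate $(\AA_\eps S_{\BB_\eps})^{(*n)}$ gains Sobolev regularity with your $\AA_\eps$, and the abstract machinery cannot close. Your proposed mechanism (``iterated convolutions $k_\eps^{*n}$ spread mass'') does not produce Sobolev regularity; it produces spatial spreading, which is a different thing, and in any case with $\AA_\eps=M\chi_R$ there are no convolutions appearing in $\AA_\eps S_{\BB_\eps}$ to iterate.

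The paper's fix is to put the convolution into $\AA_\eps$: for $\eps>0$ one sets $\AA_\eps f := M\chi_R\,(k_\eps*f)$. The key technical point, which your proposal is missing, is the Fourier-side inequality
\[
\|\nabla(k_\eps*f)\|_{L^2}^2 \;\le\; K\, I_\eps(f),
\qquad I_\eps(f):=\frac{1}{2\eps^2}\iint (f(x)-f(y))^2 k_\eps(x-y)\,dx\,dy,
\]
proved from $|\widehat k(\xi)|^2 \le K(1-\widehat k(\xi))/|\xi|^2$ (this is where $k\in W^{2,1}$ and the positivity \eqref{prop-DtoC:kbisbis} are actually used: positivity ensures $\widehat k(\xi)<1$ for $\xi\neq 0$). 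The dissipativity estimate for $\BB_\eps$ then gives $\int_0^\infty I_\eps(S_{\BB_\eps}(s)f)\,e^{-2as}\,ds \le C\|f\|_{L^2(m)}^2$, and combining with the inequality above one gets $\int_0^\infty \|\AA_\eps S_{\BB_\eps}(s)f\|_{H^1(m)}^2 e^{-2as}\,ds\le C\|f\|_{L^2(m)}^2$, uniformly in $\eps$. This is how one gains one derivative per convolution with $\AA_\eps$, and iterates. A dual version handles $L^1\to L^2$.

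A secondary point: the paper does not work in $L^2(G_0^{-1/2})$ (where $\Lambda_\eps$ has no obvious structure) but in polynomially weighted Sobolev spaces $H^s_{r_0}$; the perturbation argument is run in the chain $H^6_{r_0+1}\subset H^3_{r_0}\subset L^2_{r_0}$ and then enlarged to $L^1_r$. Your $L^2(G_0^{-1/2})$ route would require separate dissipativity estimates for $\Lambda_\eps$ in that exponentially weighted space, which are not available.
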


The method of the proof consists in introducing a suitable splitting of the operator $\Lambda_\eps$ as $\Lambda_\eps = \AA_\eps + \BB_\eps$, in establishing some dissipativity and regularity properties on 
$\BB_\eps$ and $\AA_\eps S_{\BB_\eps}$ and finally in applying the version  \cite{MS,Mbook*} of the Krein-Rutman theorem as well as  the perturbation theory developed in \cite{MMcmp,Granular-IT*,Mbook*}.

\smallskip
\subsection{Splitting of $\Lambda_\eps$}
Let us fix $\chi \in \DD(\R^d)$ radially symmetric and satisfying $\mathds{1}_{B(0,1)}Â \le \chi \le \mathds{1}_{B(0,2)}$. We define $\chi_R$ by $\chi_R(x) := \chi(x/R)$ for $R>0$ 
 and we denote $\chi_R^c := 1 - \chi_R$. 

\smallskip
For $\eps > 0$, we define the splitting $\Lambda_\eps = \AA_\eps+\BB_\eps$  with 
$$
\AA_\eps f := M \, \chi_R \, (k_\eps * f), 
$$
$$
\BB_\eps f := \left( {1 \over \eps^2} - M \right) (k_\eps * f - f) + M \, \chi_R^c \, (k_\eps * f -f) + \hbox{div}(xf) - M \, \chi_R \, f, 
$$
for some constants $M$, $R$ to be chosen later. Similarly, we define the splitting $\Lambda_0 = \AA_0+\BB_0$  with  $\AA_0 f := M \, \chi_R f$ and thus 
$\BB_0 f := \Lambda_0 f - M \, \chi_R f$ for some constants $M$, $R$ to be chosen later. 

%

\smallskip
\subsection{Uniform boundedness of $\AA_\eps$}
\begin{lem} \label{lem-DtoC:Abounded} For any $p\in [1,\infty]$, $s \ge 0$ and any weight function $\nu \ge 1$, the
 operator $\AA_\eps$ is bounded from $W^{s,p}$  into $W^{s,p}(\nu)$ with norm independent of $\eps$.
\end{lem}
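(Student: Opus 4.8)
The plan is to unwind the definition $\AA_\eps f = M\,\chi_R\,(k_\eps * f)$ and reduce the bound to two elementary facts: that convolution with $k_\eps$ is bounded on every $W^{s,p}$ uniformly in $\eps$, and that multiplication by the fixed compactly supported smooth cutoff $M\chi_R$ turns any weighted estimate into a trivial one because $\chi_R$ kills the weight. First I would recall that $\|k_\eps\|_{L^1} = \|k\|_{L^1} = 1$ for all $\eps>0$ by the normalization \eqref{prop-DtoC:k} together with the scaling $k_\eps(x)=\eps^{-d}k(x/\eps)$; hence by Young's convolution inequality $\|k_\eps * f\|_{L^p} \le \|k_\eps\|_{L^1}\|f\|_{L^p} = \|f\|_{L^p}$, with constant $1$ independent of $\eps$. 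For the Sobolev case one uses that (weak) derivatives commute with convolution, $\partial^\beta(k_\eps * f) = k_\eps * \partial^\beta f$ for any multi-index $\beta$ with $|\beta|\le s$ (and, if $s$ is not an integer, one argues via Fourier multipliers noting $\widehat{k_\eps}$ is bounded by $1$ in modulus), so that $\|k_\eps * f\|_{W^{s,p}} \le \|f\|_{W^{s,p}}$ uniformly in $\eps$.

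Next I would handle the multiplication by $M\chi_R$ and the insertion of the weight $\nu$. Since $\chi_R$ is supported in the ball $B(0,2R)$ and $\nu\ge 1$ is continuous (or at least locally bounded) there, the quantity $C_{R,\nu} := M\sup_{|x|\le 2R}\nu(x)$ is finite, and for any $g$ one has the pointwise bound $|M\chi_R(x) g(x)|\,\nu(x) \le C_{R,\nu}\,|g(x)|\,\mathds{1}_{B(0,2R)}(x) \le C_{R,\nu}\,|g(x)|$, giving $\|M\chi_R\, g\|_{L^p(\nu)} \le C_{R,\nu}\|g\|_{L^p}$. For the $W^{s,p}(\nu)$ bound one applies the Leibniz rule to $\partial^\beta(M\chi_R g) = \sum_{\gamma\le\beta}\binom{\beta}{\gamma}M\,\partial^{\beta-\gamma}\chi_R\,\partial^\gamma g$; each $\partial^{\beta-\gamma}\chi_R$ is a fixed bounded compactly supported function (with norm depending on $R$ but not $\eps$), so $\|M\chi_R g\|_{W^{s,p}(\nu)} \le C_{R,\nu,s}\|g\|_{W^{s,p}}$. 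Chaining the two estimates with $g = k_\eps * f$ yields $\|\AA_\eps f\|_{W^{s,p}(\nu)} \le C_{R,\nu,s}\|k_\eps * f\|_{W^{s,p}} \le C_{R,\nu,s}\|f\|_{W^{s,p}}$, with a constant independent of $\eps$, which is the claim.

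There is essentially no hard part here; the only point requiring a word of care is the non-integer Sobolev exponent $s$, where rather than differentiating one phrases the convolution bound as a Fourier-multiplier estimate $\|k_\eps * f\|_{H^s}\le\||\widehat{k_\eps}|\|_{L^\infty}\|f\|_{H^s}=\|f\|_{H^s}$ (and the analogous statement for $W^{s,p}$ via the Bessel-potential characterization), and for the multiplication step one invokes the standard fact that a fixed $\DD(\R^d)$ function is a pointwise multiplier on $W^{s,p}$. Since the lemma will in practice only be used for integer $s$ (indeed $s\in\{0,1\}$ in this paper), I would actually just present the integer-$s$ argument in detail and remark that the fractional case is identical modulo these standard facts. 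The uniformity in $\eps$ is automatic throughout because $\eps$ enters only through $k_\eps$, whose $L^1$ norm and Fourier transform modulus are $\eps$-independent, and never through $M$, $R$, or $\nu$.
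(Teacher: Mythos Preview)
Your proof is correct and follows exactly the same approach as the paper: Young's inequality for the convolution (using $\|k_\eps\|_{L^1}=1$), then the compact support of $\chi_R$ to absorb the weight. The paper's own proof is just the two-line $s=0$ case and then ``The proof for the case $s>0$ is similar and it is thus skipped,'' so your Leibniz-rule treatment of the higher derivatives is simply a spelled-out version of what the paper omits.
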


\begin{proof} For any $f \in L^p(\nu)$, we have
$$
\begin{aligned}
\| \AA_\eps f \|_{L^p(\nu)}  
&\leq C \, \| k_\eps * f \|_{L^p} \leq C \, \|f\|_{L^p}.
\end{aligned}
$$
thanks to the Young inequality and because $\|k_\eps\|_{L^1} = \|k\|_{L^1} = 1$.  We conclude that  $\AA_\eps$ is bounded from $L^p$  into $L^p(\nu)$.  
The proof for the case $s>0$ is similar and it is thus skipped. 
\end{proof}

\smallskip
\subsection{Uniform dissipativity properties of $\BB_\eps$}

\begin{lem} \label{lem-DtoC:BdissipLp}
Consider $p\in [1,2]$ and  $q>d(p-1)/p$. Let us suppose that $k \in L^1_{pq+1}$.
For any $a>d(1-1/p)-q$, there exist $\eps_0>0$, $M \geq 0$ and $R \geq0$ such that for any $\eps \in [0, \eps_0]$, $\BB_\eps - a$ is dissipative in $L^p(m)$, or equivalently
\beqn\label{eq:BepsDissipL1}
\langle (\BB_\eps - a) f , \Phi'(f) \rangle_{L^p(m)} \le 0, \quad \forall \, f \in \DD(\R^d), \,\, \Phi(f) = |f|^p/p.
\eeqn
\end{lem}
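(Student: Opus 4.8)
The plan is to compute $\langle (\BB_\eps - a) f, \Phi'(f)\rangle_{L^p(m)}$ explicitly, term by term according to the splitting of $\BB_\eps$, and to show that, for a suitable choice of the constants $M$ and $R$ (and $\eps_0$ small), the sum is bounded above by $0$. Recall that $\Phi'(f) = |f|^{p-2}f$, so that $\langle g, \Phi'(f)\rangle_{L^p(m)} = \int g\,|f|^{p-2}f\, m^p\,dx$, and the natural dissipativity functional is $\int |f|^p m^p$. First I would treat the transport part: a direct integration by parts gives
\begin{equation*}
\int \mathrm{div}(xf)\, |f|^{p-2}f\, m^p\,dx = -\frac{1}{p}\int x\cdot\nabla(|f|^p)\, m^p\,dx = \left(\frac{d}{p} - \frac{d}{p'}\right)\cdot(\text{something}) + \int |f|^p\, \frac{x\cdot\nabla m}{m}\, m^p\,dx,
\end{equation*}
more precisely yielding a contribution $\bigl(\tfrac{d}{p} + \tfrac{x\cdot\nabla m}{m}\bigr)$ tested against $|f|^p m^p$; since $m = \langle x\rangle^q$ one has $x\cdot\nabla m / m = q|x|^2/\langle x\rangle^2 \to q$, so this term behaves like $(d/p - d)\cdot\frac1p\cdot(\cdots)$; carefully, the transport term contributes $\int\bigl(\tfrac{d}{p'} - \tfrac{q|x|^2}{\langle x\rangle^2}\bigr)|f|^p m^p\,dx$ up to sign conventions, which is $\le (d/p' - q + o(1))\int|f|^pm^p$ for large $|x|$ and bounded on compacts. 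This is the term that produces the threshold $a > d(1-1/p) - q = d/p' - q$.

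Next I would handle the convolution terms. For the gain-type piece one uses that $k_\eps\ge 0$ with $\|k_\eps\|_{L^1}=1$ together with the convexity inequality (a Jensen/Young argument): $\int (k_\eps * f)\,|f|^{p-2}f\, m^p \le \bigl(\int |k_\eps*f|^p m^p\bigr)^{1/p}\bigl(\int|f|^pm^p\bigr)^{1/p'}$, and then $\|(k_\eps*f)\, m\|_{L^p} \le \|f\,m\|_{L^p}\cdot\|\,|k_\eps|\, (m(\cdot+\,\cdot)/m)\,\|$-type bound; the key point is that $\int k_\eps(y)\, m(x-y)/m(x)\,dy \to 1$ as $\eps\to0$ uniformly in $x$, with an error controlled by $\eps$ times a moment of $k$ (here the hypothesis $k\in L^1_{pq+1}$ enters, to bound $\int k(z)\langle z\rangle^{pq}\,$-type quantities after rescaling, giving the $pq+1$ moment). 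Thus $\langle\tfrac{1}{\eps^2}(k_\eps*f - f), \Phi'(f)\rangle_{L^p(m)} \le 0$ up to a term $O(1)\int|f|^pm^p$ coming from the weight mismatch, but crucially the coefficient is $(1/\eps^2 - M)$ in $\BB_\eps$, so for $\eps$ small this dominant singular piece is dissipative on its own (negative), and what survives is the $-M(k_\eps*f-f)$ and the $\chi_R^c$ pieces. The localized piece $M\chi_R^c(k_\eps*f-f)$ is estimated by $M\int_{|x|\ge R}(k_\eps*|f| + |f|)|f|^{p-1}m^p \le 2M\sup_{|x|\ge R}(\text{convolution factor})\int|f|^pm^p$, which can be made as small as we like by the two-step choice: first fix $M$ large enough to beat the $o(1)$ and constant terms from transport near infinity, then fix $R$ large and $\eps_0$ small so that the error terms are absorbed. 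Finally $-M\chi_R f$ tested against $\Phi'(f)$ gives $-M\int\chi_R|f|^pm^p\le 0$, and $\langle -af,\Phi'(f)\rangle = -a\int|f|^pm^p$.

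Collecting everything, on the region $|x|\ge R$ the coefficient in front of $\int|f|^pm^p$ is at most $d/p' - q + \varepsilon_1(R,\eps_0) - a < 0$ provided $a > d/p' - q$ and $R$ large, $\eps_0$ small; on the compact region $|x|\le R$ the bad transport and convolution contributions are bounded by a constant depending on $R$ but are dominated by the term $-M\int_{|x|\le 2R}|f|^pm^p$ once $M$ is chosen large. This establishes \eqref{eq:BepsDissipL1}. The main obstacle, and the place requiring genuine care rather than routine bookkeeping, is the uniform-in-$\eps$ control of the singular convolution term $\tfrac1{\eps^2}(k_\eps*f-f)$ against the weight $m^p$: one must expand $m(x-y)/m(x)$ to second order in $y$, use the normalization \eqref{prop-DtoC:k} (the first moment of $k$ vanishes and the second is $2I_d$) to see that the $O(\eps)$ term cancels and the $O(\eps^2)$ term combines with the $1/\eps^2$ prefactor to give a bounded, controlled contribution (morally $\Delta m/m$), so that the whole expression is genuinely $O(1)$ and not $O(1/\eps^2)$ or $O(1/\eps)$ — this is precisely where the moment hypothesis $k\in L^1_{pq+1}$ and the structural assumptions on $k$ are used, and it is analogous to the consistency computation for a finite-difference scheme.
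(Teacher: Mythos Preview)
Your overall strategy and, crucially, your identification of the main obstacle (the uniform-in-$\eps$ control of the singular piece $\tfrac{1}{\eps^2}(k_\eps*f-f)$ via a second-order Taylor expansion of the weight, using the moment conditions \eqref{prop-DtoC:k}) are correct and match the paper. But two points deserve attention.

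\medskip
\textbf{A cleaner device the paper uses.} Rather than the H\"older/Jensen route you sketch for the gain term, the paper applies the pointwise convexity inequality
\[
(f(y)-f(x))\,\mathrm{sign}(f(x))\,|f(x)|^{p-1}\;\le\;\tfrac{1}{p}\bigl(|f|^p(y)-|f|^p(x)\bigr),
\]
valid because $\Phi(s)=|s|^p/p$ is convex. Testing $(\tfrac{1}{\eps^2}-M)(k_\eps*f-f)$ against $\Phi'(f)m^p$ and swapping variables then reduces the whole contribution to a \emph{weight-only} integral $\int k_\eps(x-y)(m^p(y)-m^p(x))|f|^p(x)\,dy\,dx$, on which the Taylor expansion is performed directly. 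This is more transparent than going through $\|k_\eps*f\|_{L^p(m)}$ and avoids any loss: you get exactly the bounded contribution $C\int |f|^p\langle x\rangle^{pq-2}\,dx$ (in fact with a favourable decay $\langle x\rangle^{-2}$ relative to $m^p$).

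\medskip
\textbf{A genuine gap in your treatment of the $M\chi_R^c(k_\eps*f-f)$ piece.} Your bound
\[
M\!\int_{|x|\ge R}\!(k_\eps*|f|+|f|)\,|f|^{p-1}m^p \;\le\; 2M\cdot(\text{convolution factor})\int |f|^p m^p
\]
gives a term of order $O(M)$, not something small. Since $M$ must be taken large to dominate the bounded-region contribution, this term cannot be absorbed as written, and the argument does not close. The paper handles this piece with the \emph{same} convexity inequality as above, obtaining
\[
T_2 \le \tfrac{M}{p}\int k(z)\bigl[\chi_R^c(x+\eps z)m^p(x+\eps z)-\chi_R^c(x)m^p(x)\bigr]|f|^p(x)\,dz\,dx,
\]
and then a \emph{first-order} Taylor expansion (no $1/\eps^2$ prefactor here, so first order suffices) gives $T_2\le M\,C_R\,\eps\int|f|^p m^p$. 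This is small in $\eps$, and the correct order of choices is: fix $M,R$ large to force $\psi_{R,p}^\eps - M\chi_R\le a$ up to the $MC_R\eps$ error, \emph{then} take $\eps_0$ small to absorb that error. Your ``$R$ large'' intuition for this term is off; the smallness genuinely comes from $\eps$.
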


\begin{proof} We split the operator in several pieces 
$$
\begin{aligned}
\BB_\eps f =  \left( {1 \over \eps^2} - M \right) \,  (k_\eps * f - f) &+ M \, \chi_R^c \, (k_\eps * f - f) \\
&+
 \hbox{div}(xf)  -  M \, \chi_R \, f =: \BB^1_\eps + ... + \BB^4_\eps,
\end{aligned}
$$
and we estimate each term
$$
T_i := \langle \BB^i_\eps f, \Phi'(f) \rangle_{L^p(m)} =  \int_{\R^d}  \left(\BB^i_\eps f \right) \,  (\hbox{sign} f)\, |f|^{p-1} \, m^p \, dx
$$
separately. From now on, we  consider $a>d(1-1/p)-q$, we fix $\eps_1>0$ such that  $M \leq 1/(2\eps_1^2)$ and we consider $\eps \in (0,\eps_1]$.
%
%
%
 \smallskip
We first deal with $T_1$. We observe that 
\bear \label{eq-DtoF:calculconvexBIS}
 (f(y) - f(x)) \, \hbox{sign}(f(x))\, |f|^{p-1}(x)  
 \le {1 \over p} (|f|^p(y) - |f|^p(x)),  
\eear
using the convexity of $\Phi$. We then compute 
$$
\begin{aligned}
T_1 &= \left( {1 \over \eps^2} - M \right) \int _{\R^d\times \R^d} k_\eps(x-y) \, (f(y)-f(x))  \, \Phi'(f(x)) \, m^p(x) \, dy \, dx \\
&\le  
{1 \over p}  \left( {1 \over \eps^2} - M \right) \int_{\R^d\times \R^d} \left(|f|^p(y) - |f|^p(x) \right) \, k_\eps(x-y) \,  m^p(x) \, dy \, dx \\
&= {1 \over p} \left( {1 \over \eps^2} - M \right) \int_{\R^d\times \R^d} \left(m^p(y) - m^p(x) \right) \, k_\eps(x-y) \,  |f|^p(x) \, dy \, dx, 
\end{aligned}
$$
where we have performed a change of variables to get the last equality. From a Taylor expansion, we have  
$$
m^p(y) -m^p(x) = (y-x) \cdot \nabla m^p(x) + \Theta(x,y),
$$
where 
$$
\begin{aligned}
 | \Theta(x,y) | &\le  
{1 \over 2}  \int_0^1 | D^2m^p(x+\theta(y-x)) (y-x,y-x) | \, d\theta \\
&\le C \, |x-y|^2 \, \langle x \rangle^{pq-2} \, \langle x-y \rangle^{pq-2},
\end{aligned}
$$
for some constant $C \in (0,\infty)$.  The term involving the gradient of $m^p$ gives no contribution because of~\eqref{prop-DtoC:k} and we thus obtain  
\beqn \label{eq-DtoC:T1}
\begin{aligned}
T_1 &\leq C \, \left( 1 - M \eps^2 \right) \, 
\int_{\R^d \times \R^d} k_\eps(x-y) \, { |x-y|^2 \over \eps^2}\, \langle x-y \rangle^{pq-2} \, dy \, |f|^p(x) \langle x \rangle^{pq-2} \, dx \\
&\leq C \, \int_{\R^d} |f|^p(x) \, \langle x \rangle^{pq-2} \, dx.
\end{aligned}
\eeqn

We now treat the second term $T_2$. Proceeding as above and thanks to \eqref{eq-DtoF:calculconvexBIS} again, we have 
$$
\begin{aligned}
T_2 &= \int_{\R^d \times \R^d} M \, \chi_R^c(x) \, k_\eps(x-y) \, (f(y) - f(x)) \, \Phi'(f(x)) \, m^p(x) \, dy \, dx  \\
&\le {M \over p} \int_{\R^d \times \R^d} k(z) \, \{Â \chi_R^c (x+ \eps z) \, m^p(x+ \eps z)  -\chi_R^c(x) \, m^p(x) \}Â \, dz \, |f(x)|^p  \, dy \\
\end{aligned}
$$
Using the mean value theorem 
$$
\chi_R^c(x+\eps z) = \chi_R^c(x) + \eps \,z \cdot \nabla \chi_R^c (x+ \theta \eps z), \, \, \,  m^p(x+\eps z) = m^p(x) + \eps z \cdot \nabla m^p(x + \theta' \eps z),
$$
for some $\theta, \theta' \in (0,1)$, and the estimates 
$$
|\nabla \chi_R^c|Â \le C_R \quad \text{and}  \quad |\nabla m^p(y+ \theta' \eps z)| \le C \, \langle y \rangle^{pq-1} \langle z \rangle^{pq-1},
$$ 
we conclude that 
\beqn \label{eq-DtoC:T2}
T_{2} \leq M \, C_R \, \eps \,  \int_{\R^d} |f|^p  \, m^p . 
\eeqn

As far as $T_3$ is concerned, we just perform an integration by parts:
\beqn \label{eq-DtoC:T3}
\begin{aligned}
T_3  
&= d \int_{\R^d} |f|^p \, m^p - {1 \over p} \int_{\R^d} |f|^p \, \hbox{div}(x \, m^p) \\
&= \int_{\R^d} |f(x)|^p  \, m^p(x) \, \left( d \left( 1 - {1 \over p}\right) - \frac{ q \, |x|^2 }{\langle x \rangle^2}Â \right) \, dx.
\end{aligned}
\eeqn
The estimates (\ref{eq-DtoC:T1}), (\ref{eq-DtoC:T2}) and (\ref{eq-DtoC:T3}) together give
$$
\begin{aligned}
\int _{\R^d}\BB_\eps f \, \Phi'(f) \, m^p &\leq \int_{\R^d} |f|^p \, m^p \, \left( C \, \langle x \rangle^{-2} + \frac{d}{p'} -\frac{ q \, |x|^2 }{\langle x \rangle^2}+ M \, C_R \, \eps - M \, \chi_R \right) \\
&= \int _{\R^d}  |f|^p \, m^p  \left( \psi^\eps_{R,p} - M \, \chi_R \right),
\end{aligned}
$$
where $p' = p/(p-1)$ and we have denoted 
\beqn \label{def:psitilde}
\psi^\eps_{R,p} (x):= C \, \langle x \rangle^{-2} + \frac{d}{p'} -\frac{ q \, |x|^2 }{\langle x \rangle^2} + M \, C_R \, \eps.
\eeqn
Because $\psi^\eps_{R,p}(x) \to d/p' -q$ when $\eps\to0$ and $|x|Â \to \infty$, we can thus choose $M \ge 0$, $R \ge 0$ and $\eps_0 \le \eps_1$ such that for any $\eps \in(0,\eps_0]$,
$$
\forall \, x \in \R^d, \quad \psi^\eps_{R,p}(x) \le a.
$$
As a conclusion, for such a choice of constants, we obtain \eqref{eq:BepsDissipL1}. We refer to~\cite{GMM,MM*} for the proof in the case $\eps=0$.
\end{proof}

\begin{lem} \label{lem-DtoC:BdissipHs}
Let $s \in \N$ and $q>d/2+s$. Assume that $k \in L^1_{2q+1}$. Then, for any $a>d/2-q+s$, there exist $\eps_0>0$, $M \geq 0$ and $R \geq0$ such that for any $\eps \in [0, \eps_0]$, $\BB_\eps - a$ is hypodissipative in $H^s(m)$. 
\end{lem}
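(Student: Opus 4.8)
The plan is to reduce the $H^s(m)$ estimate to the $L^p(m)$ estimate of Lemma~\ref{lem-DtoC:BdissipLp} (with $p=2$) applied, for each multi-index $\beta$ with $|\beta|\le s$, to the derivative $\partial^\beta f$. First I would write, for $f \in \DD(\R^d)$,
$$
\langle \BB_\eps f, f \rangle_{H^s(m)} = \sum_{|\beta|\le s} \langle \partial^\beta (\BB_\eps f), \partial^\beta f \rangle_{L^2(m)},
$$
and then commute $\partial^\beta$ with each of the four pieces $\BB^1_\eps,\dots,\BB^4_\eps$ of the splitting. The convolution pieces $\BB^1_\eps$ and (the convolution part of) $\BB^2_\eps$ commute exactly with $\partial^\beta$, since $\partial^\beta(k_\eps * f) = k_\eps * \partial^\beta f$; the transport piece $\hbox{div}(xf)$ produces, via $\partial^\beta(\hbox{div}(x f)) = \hbox{div}(x\,\partial^\beta f) + |\beta|\,\partial^\beta f$, an extra lower-order term $|\beta|\,\partial^\beta f$ which merely shifts the constant $d/2 - q$ by $s$ (this is exactly why the threshold in the statement is $d/2 - q + s$ rather than $d/2 - q$); and the multiplication pieces $M\chi_R^c$ acting after the $(k_\eps * \cdot - \cdot)$ difference, together with $M\chi_R f$, generate commutator terms involving derivatives of $\chi_R$ and $\chi_R^c$ which are bounded by $C_R$ times lower-order derivatives and hence, after Cauchy-Schwarz and absorption, are controlled (one may also simply enlarge $R$). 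So each term in the sum over $\beta$ is, up to these controlled commutators, of the form $\langle \BB_\eps^{(\beta)} g, g\rangle_{L^2(m)}$ with $g = \partial^\beta f$ and $\BB_\eps^{(\beta)}$ a operator of the same structure as $\BB_\eps$ but with $d$ replaced by $d$ and the drift constant shifted by $|\beta|$.

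Then I would invoke the computation in the proof of Lemma~\ref{lem-DtoC:BdissipLp}: for $p=2$ it gives, for each $g$,
$$
\langle \BB_\eps g, g \rangle_{L^2(m)} \le \int_{\R^d} |g|^2 \, m^2 \, \bigl( \psi^\eps_{R,2} - M\chi_R \bigr),
$$
with $\psi^\eps_{R,2}(x) \to d/2 - q$ as $\eps \to 0$, $|x|\to\infty$; carrying along the extra $+|\beta|$ from the commutator with $\hbox{div}(x\cdot)$ and the $C_R$-bounded commutator terms, the effective symbol tends to $d/2 - q + s$, and the negative term $-M\chi_R$ (on the ball $B(0,2R)$) can be used to dominate the remaining positive contributions there. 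Given $a > d/2 - q + s$, choose first $R$ large so that $\psi^\eps_{R,2} + s + (\text{commutator bounds}) \le a$ outside $B(0,R)$, then $M$ large so that the same holds inside, then $\eps_0 \le \eps_1$ small so the $M C_R \eps$ term is negligible; the bookkeeping is the routine part and I would refer to \cite{GMM,MM*} for the $\eps=0$ case as in the previous lemma.

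The main obstacle, and the only genuinely delicate point, is controlling the commutators between $\partial^\beta$ and the cutoff-times-difference operator $f \mapsto M\chi_R^c\,(k_\eps * f - f)$ uniformly in $\eps$. Naively one worries that differentiating $k_\eps * f - f$ is harmless but that the factor $\eps^{-2}$ hidden in $\BB^1_\eps$ could re-enter; however, $\BB^1_\eps f = (\eps^{-2}-M)(k_\eps*f - f)$ commutes \emph{exactly} with $\partial^\beta$, so no $\eps^{-2}$ ever meets a derivative of a cutoff, and the dangerous factor stays attached to the convex/Taylor-expansion mechanism that already tamed it at the $L^2$ level. The commutator terms that survive all carry at least one derivative of $\chi_R$ or $\chi_R^c$, hence a factor $C_R$ and no inverse power of $\eps$, and are supported in the annulus $R \le |x| \le 2R$ where $m$ is comparable to a constant depending on $R$; a Cauchy-Schwarz split $ab \le \tfrac{\delta}{2} a^2 + \tfrac{1}{2\delta} b^2$ then absorbs the top-order derivative into the good negative term and leaves finitely many lower-order $L^2(m)$ norms, which are themselves bounded (again by Cauchy-Schwarz in the $H^s(m)$ norm) and multiplied by an $\eps$-independent constant. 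Once this is organized, hypodissipativity of $\BB_\eps - a$ in $H^s(m)$ follows with constants uniform over $\eps \in [0,\eps_0]$.
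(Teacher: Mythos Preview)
Your strategy is the same as the paper's: commute $\partial^\beta$ through $\BB_\eps$, observe that the convolution piece $(\eps^{-2}-M)(k_\eps*\cdot-\cdot)$ commutes exactly, that $\partial^\beta\hbox{div}(x\cdot)=\hbox{div}(x\,\partial^\beta\cdot)+|\beta|\,\partial^\beta\cdot$ produces the shift $d/2-q\mapsto d/2-q+s$, and that the remaining commutators with $\chi_R,\chi_R^c$ yield lower-order terms with $\eps$-free constants. All of this is correct and matches the paper.

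The one point where your write-up is incomplete is the absorption of those lower-order commutator terms. After your Cauchy--Schwarz split you are left with contributions of the form $C_{R,M,\zeta}\,\|\partial^\gamma f\|_{L^2(m)}^2$ for $|\gamma|<|\beta|$, where $C_{R,M,\zeta}$ is \emph{large} (it carries the factor $M$ and the Young constant $K(\zeta)$). Saying these are ``bounded by Cauchy--Schwarz in the $H^s(m)$ norm'' does not close the estimate: you would get $\langle\BB_\eps f,f\rangle_{H^s(m)}\le a\sum_{|\beta|=s}\|\partial^\beta f\|^2_{L^2(m)}+C\sum_{|\gamma|<s}\|\partial^\gamma f\|^2_{L^2(m)}$ with $C$ large and positive, which is not $\le a\|f\|^2_{H^s(m)}$. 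The paper resolves this by working with the equivalent norm
\[
\Nt f\Nt^2_{H^s(m)}:=\sum_{j=0}^s\eta^j\,\|\partial_x^j f\|^2_{L^2(m)},\qquad \eta>0\ \text{small},
\]
so that the bad lower-order contribution coming from the $j$-th derivative equation is multiplied by $\eta^j$ and absorbed into the good term at level $j-1$. This is precisely the device that makes the conclusion \emph{hypo}dissipativity (dissipativity for an equivalent norm) rather than dissipativity in the standard norm. Your parenthetical ``one may also simply enlarge $R$'' can in principle be made to work, since $C_R\sim 1/R$, but the order of choice of $M,R,\zeta$ then becomes delicate; the $\eta$-weighted norm is the clean mechanism and you should state it explicitly.
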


\begin{proof}
The case $s=0$ is nothing but Lemma \ref{lem-DtoC:BdissipLp} applied with $p=2$. We now deal with the case $s=1$. We consider $f_t$ a solution to
$$
\partial_t f_t = \BB_\eps f_t.
$$
From the previous lemma, we already know that 
\beqn \label{eq-DtoC:evolutionf}
{1 \over 2}{ d \over dt} \| f_t \|^2_{L^2(m)} \leq  \int_{\R^d} f_t^2 \,Â m^2 \left(\psi^\eps_{R,2} - M \chi_R \right).
\eeqn
We now want to compute the evolution of the derivative of $f_t$:
$$
\partial_t \partial_x f_t = \BB(\partial_x f_t) + M \, \partial_x(\chi_R^c) \, (k_\eps*f_t -f_t) + \partial_x f_t,
$$
which in turn implies that 
$$
\begin{aligned}
\frac{1}{2} \frac{d}{dt} \| \partial_x f_t \|^2_{L^2(m)}
&= \int_{\R^d} (\partial_x f_t) \, \partial_t (\partial_x f_t) \, m^2\\
&= \int_{\R^d} (\partial_x f_t) \, \BB(\partial_x f_t) \, m^2 + \int_{\R^d} M \,\partial_x(\chi_R^c) \, (k_\eps*f_t) \, (\partial_x f_t) \, m^2 \\
&\quad- \int_{\R^d} M \, \partial_x(\chi_R^c) \, f_t \, (\partial_x f_t) \, m^2+ \int_{\R^d} (\partial_x f_t)^2 \, m^2 \\
&=: T_1 + T_2 + T_3+ T_4.
\end{aligned}
$$

Concerning $T_1$, using the proof of Lemma \ref{lem-DtoC:BdissipLp}, we obtain
\beqn \label{eq-DtoC:T1bis}
T_1 \leq \int_{\R^d} (\partial_x f_t)^2 \, m^2 \left( \psi^\eps_{R,2} - M \, \chi_R\right).
\eeqn

Then, to deal with $T_2$, we first notice that using Jensen inequality and \eqref{prop-DtoC:k}, we have
$$
\begin{aligned}
\|k_\eps * f\|^2_{L^2(m)}
&= \int_{\R^d} \left( \int_{\R^d} k_\eps(x-y) \, f(y) \, dy \right)^2 m^2(x) \, dx \\
&\leq \int_{\R^d \times \R^d } k_\eps(x-y) \, m^2(x) \, dx \, f^2(y) \, dy \\
&= \int_{\R^d \times \R^d} k(z) \, m^2(y + \eps z) \, dz\,  f^2(y) \, dy \\
&\leq C \, \int_{\R^d} k(z) \, m^2(z) \, dz \, \int_{\R^d} f^2 \, m^2.
\end{aligned}
$$
We thus obtain using that $k \in L^1_{2q}$:  
$$
\|k_\eps * f\|_{L^2(m)} \leq C \, \|f\|_{L^2(m)}.
$$
The term $T_2$ is then treated using Cauchy-Schwarz inequality, Young inequality and the fact that $|\partial_x(\chi_R^c)|$ is bounded by a constant depending only on $R$:
\beqn \label{eq-DtoC:T2bis}
\begin{aligned}
T_2 &\leq M \, C_R \, \|k_\eps *_x f_t \|_{L^2(m)} \|\partial_x f_t\|_{L^2(m)} \\
&\leq M \, C_R \,   \|f_t\|_{L^2(m)} \|\partial_x f_t\|_{L^2(m)} \\
&\leq M \, C_R \, K(\zeta) \|f_t\|^2_{L^2(m)} +  M \, C_R \, \zeta \|\partial_x f_t\|^2_{L^2(m)}
\end{aligned}
\eeqn
for any $\zeta>0$ as small as we want. 

The term $T_3$ is handled using an integration by parts and with the fact that $|\partial_x^2(\chi_R^c)|$ is bounded with a constant which only depends on $R$:
\beqn \label{eq-DtoC:T3bis}
T_3 = {M \over 2} \int_{\R^d} \partial^2_x(\chi_R^c) \, f_t^2 \, m^2 + {M \over 2} \int_{\R^d} \partial_x(\chi_R^c) \, f_t^2 \, \partial_x(m^2)  \leq M \, C_R \, \|f_t\|^2_{L^2(m)}.
\eeqn
%
%
Combining estimates \eqref{eq-DtoC:T1bis}, \eqref{eq-DtoC:T2bis} and \eqref{eq-DtoC:T3bis}, 
we easily deduce 
\beqn \label{eq-DtoC:evolutiondf}
\begin{aligned}
\frac{1}{2} \frac{d}{dt} \| \partial_x f_t \|^2_{L^2(m)} &\leq 
C_{R,M,\zeta} \int_{\R^d} f_t^2 \, m^2 \\
&\quad+ \int_{\R^d} (\partial_x f_t)^2 \, m^2 \, \left( \psi^\eps_{R,2} + M \, C_R \, \zeta + 1 - M \, \chi_R \right).
\end{aligned}
\eeqn
To conclude the proof in the case $s=1$, we introduce the norm
 $$
 \Nt f\Nt^2_{H^1(m)} := \|f\|^2_{L^2(m)} + \eta \, \|\partial_x f \|^2_{L^2(m)}, \quad \eta >0.
 $$
 Combining (\ref{eq-DtoC:evolutionf}) and (\ref{eq-DtoC:evolutiondf}), we get
 $$
 \begin{aligned}
 {1 \over 2}{d \over dt} \Nt f_t \Nt^2_{H^1(m)} 
 &\leq \int_{\R^d} f_t^2 \, m^2 \left( \psi^\eps_{R,2} + \eta \, C_{R,M,\zeta}- M \chi_R \right) \\
&\quad  + \eta \int_{\R^d} (\partial_x f_t)^2 \, m^2 \, \left( \psi^\eps_{R,2}  + M \, C_R \, \zeta + 1 - M \, \chi_R \right).
 \end{aligned}
 $$
 
Using the same strategy as in the proof of Lemma \ref{lem-DtoC:BdissipLp}, if $a>d/2-q+1$, we can choose $M$, $R$ large enough and $\zeta$, $\eps_0$, $\eta$ small enough such that we have on $\R^d$
$$
\psi^\eps_{R,2} + \eta \, C_{R,M,\zeta}- M \chi_R \leq a \quad \text{and} \quad 
\psi^\eps_{R,2} + M \, C_R \, \zeta + 1 - M \, \chi_R\leq a
$$
for any $\eps \in (0,\eps_0]$, which implies that 
$$
\frac{1}{2} \frac{d}{dt}\Nt f_t \Nt^2_{H^1(m)} \leq a \, \Nt f_t \Nt_{H^1(m)}^2. 
$$

 The higher order derivatives are treated with the same method introducing a similar modified $H^s(m)$ norm.
\end{proof}

\smallskip
\subsection{Uniform $\BB_\eps$-power regularity of  $\AA_\eps$.} \label{subsec:AisBpoweReg}
In this section we prove that  $\AA_\eps S_{\BB_\eps}$ and its iterated convolution products fulfill nice regularization and
growth estimates. 

We introduce the notation 
\beqn \label{eq-DtoC:Ieps}
I_\eps(f) := {1 \over{2\eps^2}}  \int_{\R^d \times \R^d} (f(x)-f(y))^2 \, k_\eps(x-y) \, dx \, dy.
\eeqn
\begin{lem} \label{lem-DtoC:Iepsilon}
There exists a constant $K>0$ such that for any $\eps>0$, the following estimate holds:
\beqn\label{eq-DtoC:kepsf}
\| \nabla (k_\eps * f )\|^2_{L^2} \leq K \, I_\eps(f).
\eeqn
\end{lem}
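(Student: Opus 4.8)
The plan is to work on the Fourier side, where both quantities become explicit multipliers. Writing $\hat f$ for the Fourier transform of $f$ and $\hat k_\eps(\xi) = \hat k(\eps\xi)$ (recall $\|k_\eps\|_{L^1}=1$, so $\hat k_\eps(0)=1$), Plancherel gives
\[
\|\nabla(k_\eps * f)\|_{L^2}^2 = \int_{\R^d} |\xi|^2 \, |\hat k(\eps\xi)|^2 \, |\hat f(\xi)|^2 \, d\xi .
\]
For the Dirichlet form $I_\eps(f)$, the change of variables $z = x-y$ and Plancherel yield
\[
I_\eps(f) = {1 \over 2\eps^2} \int_{\R^d} k_\eps(z) \int_{\R^d} |f(x) - f(x-z)|^2 \, dx \, dz = \int_{\R^d} \left( {1 \over \eps^2}\int_{\R^d} k_\eps(z)\,(1-\cos(z\cdot\xi))\, dz \right) |\hat f(\xi)|^2 \, d\xi ,
\]
using $\int |f(x)-f(x-z)|^2\,dx = \int |\hat f(\xi)|^2 \, |1 - e^{-iz\cdot\xi}|^2 \, d\xi$ and $|1-e^{-iz\cdot\xi}|^2 = 2(1-\cos(z\cdot\xi))$. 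After substituting $z = \eps w$ inside the inner integral, the multiplier for $I_\eps$ becomes $m_\eps(\xi) := \int_{\R^d} k(w)\,(1-\cos(\eps w\cdot\xi))/\eps^2 \, dw$. Hence it suffices to prove the pointwise multiplier bound
\[
|\xi|^2 \, |\hat k(\eps\xi)|^2 \le K \, m_\eps(\xi), \qquad \forall\, \xi\in\R^d,\ \eps>0,
\]
with $K$ independent of $\eps$.

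The key step is to control $m_\eps$ from below uniformly. Because $1 - \cos(t) \ge 0$ and, more usefully, $1-\cos t \ge c\,t^2$ for $|t|\le 1$ for an absolute constant $c>0$, one splits the $w$-integral according to whether $|\eps w\cdot\xi| \le 1$ or not. On the region where $|\eps\, w\cdot\xi|\le 1$ one gets a contribution bounded below by $c\int_{\{\cdots\}} k(w)\,|w\cdot\xi|^2\,dw$; using the normalization \eqref{prop-DtoC:k} (namely $\int k(w)\, w\otimes w\, dw = 2I_d$) together with the positivity condition \eqref{prop-DtoC:kbisbis} ($k\ge\kappa_0\mathds{1}_{B(0,\rho)}$) and the decay $k\in L^1_3$, a direct computation shows $m_\eps(\xi) \ge c_1 |\xi|^2$ when $\eps|\xi|\le 1$ and $m_\eps(\xi)\ge c_2 > 0$ when $\eps|\xi|\ge 1$ (here using that for $\eps|\xi|$ large the region $|\eps w\cdot\xi|\le 1$ still has positive $k$-mass bounded below, by \eqref{prop-DtoC:kbisbis}, an elementary geometric estimate on a slab of $B(0,\rho)$). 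On the other side, $|\hat k(\eps\xi)|\le \|k\|_{L^1}=1$ always, and moreover $|\xi|^2|\hat k(\eps\xi)|^2 \le |\xi|^2$; when $\eps|\xi|\ge 1$ one uses instead $|\xi|^2 |\hat k(\eps\xi)|^2 \le \eps^{-2} (\eps|\xi|)^2 |\hat k(\eps\xi)|^2$ and the fact that $t\mapsto t^2|\hat k(t)|^2$ is bounded (since $k\in W^{2,1}$ forces $\hat k(t) = O(|t|^{-2})$), giving $|\xi|^2|\hat k(\eps\xi)|^2 \le C\eps^{-2} \le C|\xi|^2 \cdot 1$... more carefully, $|\xi|^2|\hat k(\eps\xi)|^2 \le C\eps^{-2}$ and one compares with $m_\eps(\xi)\ge c_2$ only after noting $\eps^{-2}\le|\xi|^2$ fails — so instead in the regime $\eps|\xi|\ge1$ one bounds $|\xi|^2|\hat k(\eps\xi)|^2$ by splitting further, or simply observes $m_\eps(\xi)$ also dominates $|\xi|^2$ times a bounded factor via a cruder slab estimate. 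Combining the two regimes gives the claimed $K$.

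The main obstacle I expect is the regime $\eps|\xi|\gtrsim 1$: there $\hat k(\eps\xi)$ may be small (even zero), so one cannot hope for $m_\eps(\xi)\gtrsim |\xi|^2$, but one does not need it — one only needs $m_\eps(\xi) \gtrsim |\xi|^2 |\hat k(\eps\xi)|^2$, and since $|\hat k(\eps\xi)|^2 \lesssim (\eps|\xi|)^{-4}$ by $k\in W^{2,1}$, the right-hand side is $\lesssim \eps^{-4}|\xi|^{-2} \le |\xi|^2$ there, which is absorbed by the lower bound $m_\eps(\xi)\ge c_2$ once one checks $c_2$ can be taken $\gtrsim \eps^{-4}|\xi|^{-2}$ is false... so the cleanest route is: for $\eps|\xi|\ge 1$, bound $|\xi|^2|\hat k(\eps\xi)|^2 \le |\xi|^2 \cdot C(\eps|\xi|)^{-4} = C\eps^{-4}|\xi|^{-2} \le C$, and show $m_\eps(\xi) \ge c_2 >0$ uniformly, via \eqref{prop-DtoC:kbisbis} applied to a fixed-size slab; for $\eps|\xi|\le 1$, bound $|\hat k(\eps\xi)|^2\le1$ and use $m_\eps(\xi)\ge c_1|\xi|^2$ from the Taylor expansion and \eqref{prop-DtoC:k}. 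This dichotomy, handled with care about the two constants, closes the argument; the rest is routine.
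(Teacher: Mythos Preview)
Your approach is essentially the paper's: go to the Fourier side via Plancherel and reduce to a pointwise multiplier inequality. However, you miss a key simplification and this leads you into an arithmetic slip in the high-frequency regime.

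First, the simplification. Since $k$ is symmetric, $\hat k$ is real and your multiplier is exactly $m_\eps(\xi) = \eps^{-2}\bigl(1-\hat k(\eps\xi)\bigr)$. So the target inequality $|\xi|^2|\hat k(\eps\xi)|^2 \le K\,m_\eps(\xi)$, after the substitution $\eta=\eps\xi$, becomes the $\eps$-free statement
\[
|\eta|^2\,|\hat k(\eta)|^2 \;\le\; K\,\bigl(1-\hat k(\eta)\bigr), \qquad \forall\,\eta\in\R^d.
\]
This is what the paper proves directly: near $\eta=0$ use the Taylor expansion $\hat k(\eta)=1-|\eta|^2+O(|\eta|^3)$ (from \eqref{prop-DtoC:k} and $k\in L^1_3$); away from $0$ use that $\hat k(\eta)<1$ strictly (from \eqref{prop-DtoC:kbisbis}) together with $\hat k\in C_0$ to get $1-\hat k\ge\delta>0$, and $|\eta\,\hat k(\eta)|=|\widehat{\nabla k}(\eta)|\le C$ from $k\in W^{1,1}$. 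No $\eps$-tracking needed.

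Second, the slip. In your ``cleanest route'' for $\eps|\xi|\ge 1$ you write $|\xi|^2|\hat k(\eps\xi)|^2 \le C\eps^{-4}|\xi|^{-2}\le C$. But $\eps|\xi|\ge 1$ only gives $|\xi|^{-2}\le\eps^2$, hence $C\eps^{-4}|\xi|^{-2}\le C\eps^{-2}$, not $\le C$. Your matching lower bound $m_\eps(\xi)\ge c_2>0$ ``uniformly'' is therefore too weak by exactly a factor $\eps^{-2}$. The fix is immediate once you recognize $m_\eps(\xi)=\eps^{-2}(1-\hat k(\eps\xi))\ge \delta\,\eps^{-2}$ in this regime (your slab argument actually gives this stronger bound, since the $\eps^{-2}$ sits outside the integral); or, cleaner still, make the substitution above and the $\eps$ disappears entirely. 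Note also that $k\in W^{1,1}$ already suffices for the high-frequency bound; you do not need the full $W^{2,1}$.
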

\begin{proof} {\sl Step 1. } We prove that the assumptions made on $k$ imply
\beqn \label{prop-DtoC:kter}
|\widehat{k} (\xi)|^2 \leq K \, \frac{1 - \widehat{k}(\xi)}{|\xi|^2}, \quad \forall \, \xi \in \R^d, 
\eeqn
for some constant $K>0$. On the one hand, we have $\widehat{k}(0)= 1$,  $\widehat{k}(\xi) \in \R$ because  $k$ is symmetric and $\widehat{k} \in C_0(\R^d)$ because $k \in L^1(\R^d)$.  Moreover, performing a Taylor expansion, using the normalization condition \eqref{prop-DtoC:k} and the fact that $k \in L^1_3(\R^d)$, we have 
$$
\widehat{k}(\xi)  = 1 - |\xi|^2 + \OO (|\xi|^3), \quad \forall \, \xi \in \R^d.
$$
We then deduce that \eqref{prop-DtoC:kter} holds with $K=1$ in a small ball $\xi \in B(0,\delta)$. On the other hand, for any $\xi \not=0$, we have 
\bean
\widehat{k}(\xi) 
&=& \int_{E_\xi} k(x) \, \cos (\xi \cdot x) \, dx  + \int_{E^c_\xi} k(x) \, \cos (\xi \cdot x) \, dx 
\\
&<& \int_{E_\xi} k(x) \, dx  + \int_{E^c_\xi} k(x)   \, dx  = 1 , 
\eean
where $E_\xi := \{ x \in \R^d; \, x \cdot \xi \in (0,\pi), \,\, |x| \le r \}$ so that $k(x) \cos (\xi \cdot x ) <  k(x)$ for any $x \in E_\xi$ from \eqref{prop-DtoC:kbisbis}. Together with the fact that $\widehat{k} \in C_0(\R^d)$, we deduce that $1 - \widehat{k} (\xi) \ge \eta > 0$ for any $\xi \in B(0,\delta)^c$. 
Last, because $k \in W^{1,1}(\R^d)$, we also have $|\xi|^2 \, |\widehat{k}(\xi)|^2Â = |\widehat{\nabla k}(\xi)|^2Â \le C$ for any $\xi \in \R^d$. We then deduce that \eqref{prop-DtoC:kter} holds with $K = C/\eta$ in the set $B(0,\delta)^c$. 

\Black

\smallskip\noindent
{\sl Step 2. } From the normalization condition \eqref{prop-DtoC:k}, we have  
 $$
 \begin{aligned}
 I_\eps(f) 
& = {1 \over{2\eps^2}}  \int_{\R^d \times \R^d} f^2(x) \, k_\eps(x-y) \, dx \, dy
 + {1 \over{2\eps^2}}  \int_{\R^d \times \R^d} f^2(y) \, k_\eps(x-y) \, dx \, dy \\
 &\quad - {1 \over{\eps^2}}  \int_{\R^d \times \R^d} f(x)f(y) \, k_\eps(x-y) \, dx \, dy \\
 &= {1 \over \eps^2} \left( \int_{\R^d} f^2 - \int_{\R^d} (k_\eps*f )\, f \right).
 \end{aligned}
 $$
As a consequence, using Plancherel formula and the identity $\widehat{k_\eps}(\xi) = \widehat{k}(\eps \, \xi)$, $\forall \, \xi \in\R^d$, we get
$$
 \begin{aligned}
 I_\eps(f) &= {1 \over \eps^2} \left( \int_{\R^d} \widehat{f}^2 - \int_{\R^d} \widehat{k_\eps} \, \widehat{f}^2 \right) = \int_{\R^d} \widehat{f}^2(\xi) \frac{1- \widehat{k}(\eps \xi)}{\eps^2} \, d\xi.
 \end{aligned}
$$
Then, we use again Plancherel formula to obtain 
\bean
  \|Â \partial_x (k_\eps *f)\|_{L^2}^2 
= 
 \| \FF (\partial_x (k_\eps *f)) \|_{L^2}^2 
= \int_{\R^d} |\xi|^2 \, \widehat{k}(\eps \xi)^2 \, \widehat{f}^2 .
\eean
We conclude to \eqref{eq-DtoC:kepsf} by using \eqref{prop-DtoC:kter}. 
 \end{proof}


We now introduce the following notation $\lambda := 1/(2K) >0$ and go into the analysis of regularization properties of the semigroup $\AA_\eps S_{\BB_\eps}(t)$. 
\Black

\begin{lem} \label{lem-DtoC:regularizationL2Hs}
Consider $s_1 < s_2 \in \N$ and $q>d/2+s_2$.  We suppose that $k \in L^1_{2q+1}$. 
 Let $M$, $R$ and $\eps_0$ so that the conclusion of Lemma~\ref{lem-DtoC:BdissipHs} holds in both spaces $H^{s_1}(m)$ and $H^{s_2}(m)$. Then, for any $a \in (\max\{ d/2-q+s_2, -\lambda\},0)$, there exists $n \in \N$ such that for any $\eps \in [0,\eps_0]$, we have the following estimate 
$$
 \| (\AA_\eps S_{\BB_\eps})^{(*n)}(t) \|_{H^{s_1}(m) \rightarrow H^{s_2}(m)} \leq C_a \, e^{at}, 
$$
for some constant $C_a>0$. 
\end{lem}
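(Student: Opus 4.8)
The plan is to turn the dissipativity of $\BB_\eps$ from Lemma~\ref{lem-DtoC:BdissipHs} into an $\eps$-uniform regularising effect for $\AA_\eps S_{\BB_\eps}$ — a gain of exactly one derivative — and then to iterate it by time convolution. The two ingredients are: (i) a single-step estimate
$$
\|\AA_\eps S_{\BB_\eps}(t)\|_{H^{s}(m)\to H^{s+1}(m)}\le C\,(1+t^{-1/2})\,e^{a't},\qquad s_1\le s<s_2,
$$
uniform in $\eps\in[0,\eps_0]$ and valid for any admissible $a'$; and (ii) the convolution identity $(\AA_\eps S_{\BB_\eps})^{(*n)}(t)=\int_{\{\tau_i\ge0,\,\tau_1+\cdots+\tau_n=t\}}\AA_\eps S_{\BB_\eps}(\tau_n)\cdots\AA_\eps S_{\BB_\eps}(\tau_1)\,d\tau$. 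Choosing $n\ge\max\{s_2-s_1,2\}$, the first $s_2-s_1$ factors carry $H^{s_1}(m)$ to $H^{s_2}(m)$ one index at a time via (i), the remaining factors via the plain bound $\|\AA_\eps S_{\BB_\eps}(\tau)\|_{H^{s_2}(m)\to H^{s_2}(m)}\le Ce^{a'\tau}$ (uniform boundedness of $\AA_\eps$, Lemma~\ref{lem-DtoC:Abounded}, and dissipativity of $S_{\BB_\eps}$, Lemma~\ref{lem-DtoC:BdissipHs}), and the product of the $\tau_i^{-1/2}$ singularities integrates over the simplex (Dirichlet's integral formula) to something of size $C\,t^{N}e^{a't}$ — hence $\le C_a e^{at}$, since $a'$ can be taken strictly below $a$.

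\emph{The single-step estimate.} Fix $s$ with $s_1\le s<s_2$; since $q>d/2+s_2>d/2+s$, Lemma~\ref{lem-DtoC:BdissipHs} holds in $H^s(m)$ with the same constants $M,R,\eps_0$, so $\|S_{\BB_\eps}(\sigma)\|_{H^s(m)\to H^s(m)}\le Ce^{a''\sigma}$ for any $a''>d/2-q+s$. I would rerun that energy estimate for $\|\partial^\alpha f_\sigma\|_{L^2}^2$ ($|\alpha|\le s$, $f_\sigma=S_{\BB_\eps}(\sigma)f_0$), but now \emph{retaining} the negative contribution of the leading piece $(\eps^{-2}-M)(k_\eps*\cdot-\cdot)$ of $\BB_\eps$, which by the identity $\int_{\R^d}(k_\eps*g-g)\,g=-\eps^2 I_\eps(g)$ (from the proof of Lemma~\ref{lem-DtoC:Iepsilon}) equals $-(1-M\eps^2)I_\eps(\partial^\alpha f_\sigma)\le-\frac12 I_\eps(\partial^\alpha f_\sigma)$ once $M\eps^2\le 1/2$ — with no weight and no error term — all other contributions being bounded above by $C_R\|f_\sigma\|_{H^s}^2\le C_R\|f_\sigma\|_{H^s(m)}^2$. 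Integrating against the factor $e^{-2a'\sigma}$ with $a''<a'$ yields $\int_0^\infty e^{-2a'\sigma}\sum_{|\alpha|\le s}I_\eps(\partial^\alpha f_\sigma)\,d\sigma\le C\|f_0\|_{H^s(m)}^2$ uniformly in $\eps$; then the bootstrap $\frac{d}{d\sigma}\big(\sigma I_\eps(\partial^\alpha f_\sigma)\big)=I_\eps(\partial^\alpha f_\sigma)+\sigma\frac{d}{d\sigma}I_\eps(\partial^\alpha f_\sigma)$, with $\frac{d}{d\sigma}I_\eps(\partial^\alpha f_\sigma)\le C_R I_\eps(\partial^\alpha f_\sigma)+C_R\|f_\sigma\|_{H^s(m)}^2$ (the leading piece contributing a non-positive term; here one uses that $k$ is even, has enough moments, and the lower bound $1-\widehat k(\eta)\gtrsim\min(|\eta|^2,1)$ established in Lemma~\ref{lem-DtoC:Iepsilon}), upgrades this to $I_\eps(\partial^\alpha f_t)\le C(1+t^{-1})e^{2a't}\|f_0\|_{H^s(m)}^2$, uniformly in $\eps$ and $f_0$.

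\emph{From $I_\eps$ to regularity, and conclusion.} Because $\chi_R$ is compactly supported, $\|\AA_\eps g\|_{H^{s+1}(m)}\le C_R\|\chi_R(k_\eps*g)\|_{H^{s+1}}$; expanding derivatives by Leibniz, the top-order term is $\sum_{|\alpha|=s}\|\nabla(k_\eps*\partial^\alpha g)\|_{L^2}^2\le K\sum_{|\alpha|=s}I_\eps(\partial^\alpha g)$ by Lemma~\ref{lem-DtoC:Iepsilon}, and the lower-order terms are $\le C_R\|g\|_{H^s}^2$ (Young's inequality and $\|k_\eps\|_{L^1}=1$), so $\|\AA_\eps g\|_{H^{s+1}(m)}^2\le C_R\big(\sum_{|\alpha|\le s}I_\eps(\partial^\alpha g)+\|g\|_{H^s(m)}^2\big)$. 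Taking $g=f_t=S_{\BB_\eps}(t)f_0$ and inserting the bounds of the previous step gives exactly the single-step estimate (i). The iteration from the first paragraph then produces $\|(\AA_\eps S_{\BB_\eps})^{(*n)}(t)\|_{H^{s_1}(m)\to H^{s_2}(m)}\le C\,t^{N}e^{a't}$; since by hypothesis $\max\{d/2-q+s_2,-\lambda\}<a<0$, one may choose $a'$ in the same open interval with $a'<a$ — the bound $d/2-q+s_2$ being what makes Lemma~\ref{lem-DtoC:BdissipHs} (hence the above) available in every $H^s(m)$ with $s\le s_2$, and the bound $-\lambda=-1/(2K)$ being the threshold below which the one-derivative gain (quantified by the constant $K$ of Lemma~\ref{lem-DtoC:Iepsilon}) can no longer be propagated through the iteration with rate $a'$ — whence $t^{N}e^{a't}\le C_a e^{at}$, which is the assertion. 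The value $\eps=0$ is the classical Fokker--Planck hypoellipticity, handled by the same scheme with $\|\nabla\,\cdot\,\|_{L^2}^2$ in place of $I_\eps$ (cf.\ \cite{GMM,MM*}).

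\emph{Main obstacle.} The whole difficulty is the uniformity in $\eps$: the convolution operator $k_\eps*\,\cdot\,$ is not by itself uniformly smoothing (its symbol $\widehat k(\eps\xi)$ decays only like $|\eps\xi|^{-2}$), so the gain of even one derivative must be extracted through the dissipation functional $I_\eps$ via Lemma~\ref{lem-DtoC:Iepsilon}. Carrying this out — and in particular controlling the interaction of the confinement and cut-off terms with $I_\eps$, which produces, through the commutator bound $|m(x)-m(y)|\lesssim|x-y|\,\langle x\rangle^{q-1}\langle x-y\rangle^{q-1}$, only terms absorbable because $k\in L^1_{2q+1}$ — while keeping all constants and the $t^{-1/2}$ singularity non-degenerate as $\eps\to0$, is the technical heart of the lemma.
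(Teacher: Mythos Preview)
Your overall architecture --- extract one derivative through the dissipation functional $I_\eps$ and Lemma~\ref{lem-DtoC:Iepsilon}, then iterate by time convolution --- is exactly the paper's. The difference, and the gap, lies in how you turn the dissipation into regularity.

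You aim for a \emph{pointwise-in-time} single-step bound $\|\AA_\eps S_{\BB_\eps}(t)\|_{H^{s}(m)\to H^{s+1}(m)}\le C(1+t^{-1/2})e^{a't}$ via the bootstrap $\frac{d}{d\sigma}I_\eps(\partial^\alpha f_\sigma)\le C_R\,I_\eps(\partial^\alpha f_\sigma)+C_R\|f_\sigma\|_{H^s(m)}^2$. That differential inequality is asserted, not proved, and it is precisely where the drift $\mathrm{div}(xf)$ bites: in Fourier variables the drift contributes $\int\bigl(d\,\omega_\eps(\xi)+\xi\!\cdot\!\nabla_\xi\omega_\eps(\xi)\bigr)|\widehat h|^2\,d\xi$ with $\omega_\eps(\xi)=(1-\widehat k(\eps\xi))/\eps^2$, and $\xi\!\cdot\!\nabla_\xi\omega_\eps=-\eta\!\cdot\!\nabla\widehat k(\eta)/\eps^2$, $\eta=\eps\xi$. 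Bounding this by $C\,\omega_\eps$ (plus lower order) for large $|\eta|$ requires $|\eta|\,|\nabla\widehat k(\eta)|\le C$, i.e.\ $x\nabla k\in L^1$, which is \emph{not} implied by the hypotheses $k\in W^{2,1}\cap L^1_{2q+1}$. The non-positive contribution of the leading piece $(\eps^{-2}-M)(k_\eps*\cdot-\cdot)$ does not absorb this either: it gives $-\,2(\eps^{-2}-M)\eps^2\int\omega_\eps^2|\widehat h|^2$, of size $O(\eps^{-2})$ at large $|\eta|$, while the drift term can be of size $O(|\eta|/\eps^2)$. So the bootstrap fails, at least under the stated assumptions, and with it the $t^{-1/2}$ pointwise bound.

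The paper sidesteps this entirely. From the same energy identity it keeps only the \emph{time-integrated} control
\[
\int_0^\infty e^{-2\alpha_0 s}\,\|k_\eps*f_s\|_{\dot H^1}^2\,ds\;\le\; C\,\|f\|_{L^2(m)}^2,
\]
which follows by straightforward Gr\"onwall (no differentiation of $I_\eps$ in time, hence no commutation with the drift). Since $\AA_\eps=M\chi_R(k_\eps*\cdot)$, this directly yields $\int_0^\infty e^{-2\alpha_0 s}\|\AA_\eps S_{\BB_\eps}(s)f\|_{H^1(m)}^2\,ds\le C\|f\|_{L^2(m)}^2$; one Cauchy--Schwarz in $s$ turns this into an $L^1$-in-time bound against $e^{-as}$ (here is where the constraint $a>-\lambda$ and $a>\alpha_0$ enter), and a \emph{single further} convolution with the $L^\infty$-in-time estimate $\|\AA_\eps S_{\BB_\eps}(t-s)\|_{H^1(m)\to H^1(m)}\le Ce^{a(t-s)}$ gives the pointwise bound for $(\AA_\eps S_{\BB_\eps})^{(*2)}$. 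Your simplex integration with $\tau_i^{-1/2}$ singularities is thus unnecessary: two factors already do the job of one derivative, without any short-time blow-up rate to manage. Replacing your bootstrap by this $L^2_t$-then-$L^\infty_t$ device closes the argument with no extra hypothesis on $k$.
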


\begin{proof}
We first give the proof for the case $(s_1,s_2)=(0,1)$. We consider $a \in (\max\{ d/2-q+1, -\lambda\},0)$, $\alpha_0$ and $\alpha_1$ such that 
$a>\alpha_0>\alpha_1>\max\{ d/2-q+1, -\lambda\}$ and $f_t:= S_{\BB_\eps}(t) f$ with $f \in L^2(m)$, i.e. that satisfies 
$$
\partial_t f_t = \BB_\eps f_t, \quad f_0=f.
$$
From the proof of Lemma \ref{lem-DtoC:BdissipHs}, there exists $\eps_0$ such that for any $\eps \in (0,\eps_0]$, we have 
$$
\begin{aligned}
&\quad {1 \over 2} {d \over dt} \|f_t\|^2_{L^2(m)}\\
&\leq  - {1 \over 2} \left( {1 \over \eps^2} - M \right)\int_{\R^d \times \R^d} \left( f(y) - f(x) \right)^2 \, k_\eps(x-y)\,   m^2(x) \, dy \, dx 
  + \alpha_0\, \|f_t\|^2_{L^2(m)} \\
  &\leq -{1 \over 4\eps^2}  \int_{\R^d \times \R^d} \left( f(y) - f(x) \right)^2 \, k_\eps(x-y)\, dy \, dx   + \alpha_0 \, \|f_t\|^2_{L^2(m)} \\
  &\leq - {1 \over 2} \, I_\eps(f_t)  + \alpha_0 \, \|f_t\|^2_{L^2(m)} \\
  \end{aligned}
  $$
  where we have used that $M \leq 1/(2\eps^2)$ for any $\eps \in (0, \eps_0]$. Using Lemma \ref{lem-DtoC:Iepsilon}, we obtain 
  $$
  \begin{aligned}
 {d \over dt} \|f_t\|^2_{L^2(m)} 
&\leq - 2\lambda    \|k_\eps *_x f_t\|^2_{\dot{H}^1} + 2\alpha_0 \, \|f_t\|^2_{L^2(m)} \\
&\leq 2\alpha_0  \, \|k_\eps *_x f_t\|^2_{\dot{H}^1} + 2\alpha_0 \, \|f_t\|^2_{L^2(m)}.
\end{aligned}
$$
Multiplying this inequality by $e^{-2\alpha_0t}$, it implies that
$$
 {d \over dt} \left( \|f_t\|^2_{L^2(m)} \, e^{-2\alpha_0t} \right) \leq 2 \alpha_0 \, \|k_\eps *_x f_t\|^2_{\dot{H}^1} \, e^{-2\alpha_0t}
 $$
 and thus, integrating in time
$$
 \|f_t\|^2_{L^2(m)} \, e^{-2\alpha_0t} - 2\alpha_0 \int_0^t  \|k_\eps *_x f_s\|^2_{\dot{H}^1} e^{-2\alpha_0s} \, ds \leq \|f\|^2_{L^2(m)}.
 $$
In particular, we obtain
\beqn \label{ineq-DtoC:Iepsilon}
\int_0^\infty  \|k_\eps *_x f_s\|^2_{\dot{H}^1}   e^{-2\alpha_0s}  \,ds \leq -{1 \over 2\alpha_0} \,  \|f\|^2_{L^2(m)}.
\eeqn
We now want to estimate 
$$
\begin{aligned}
&\quad \int_0^\infty \|Â \AA_\eps S_{\BB_\eps}(s) f \|^2_{H^1(m)} \, e^{-2\alpha_0s} \, ds \\
&=  \int_0^\infty \|Â \AA_\eps f_s\|^2_{L^2(m)} \,e^{-2\alpha_0s}\, ds +  \int_0^\infty \|Â \partial_x \left( \AA_\eps f_s \right)\|^2_{L^2(m)}\, e^{-2\alpha_0s}\, ds \\
& \leq  \int_0^\infty \|Â \AA_\eps f_s\|^2_{L^2(m)} \,e^{-2\alpha_0s}\, ds+  \int_0^\infty \|Â M \partial_x(\chi_R) \, k_\eps *_x f_s\|^2_{L^2(m)}\,e^{-2\alpha_0s} \, ds\\
&\quad +  \int_0^\infty \|Â M \chi_R \, \partial_x(k_\eps *_x f_s)\|^2_{L^2(m)} \, e^{-2\alpha_0s}\, ds \\
&\quad =: I_1 + I_2 + I_3.
\end{aligned}
$$
Using dissipativity properties of $\BB_\eps$ and boundedness of $\AA_\eps$, we get
$$
I_1 \leq \int_0^\infty e^{2\alpha_1 s} e^{-2\alpha_0s} \, ds \, \|f\|^2_{L^2(m)} \leq C \, \|f\|^2_{L^2(m)}.
$$
We deal with $I_2$ using the fact that $M \partial_x(\chi_R)$ is compactly supported, Young inequality and dissipativity properties of $\BB_\eps$:
$$
\begin{aligned}
I_2 &\leq C \, \int_0^\infty \|k_\eps *_x f_s\|^2_{L^2} \, e^{-2\alpha_0s} \, ds \leq C \, \int_0^\infty \|f_s\|^2_{L^2} e^{-2\alpha_0s} \, ds \\
&\leq C \, \int_0^\infty e^{2\alpha_1 s} \, e^{-2\alpha_0s} \, ds \, \|f\|^2_{L^2(m)} \leq C \, \|f\|^2_{L^2(m)}.
\end{aligned}
$$
Finally, for $I_3$, we use \eqref{ineq-DtoC:Iepsilon} to obtain
$$
I_3 \leq  \int_0^\infty \|Â  k_\eps *_x f_s\|^2_{\dot{H}^1} \, e^{-2\alpha_0s}\, ds \leq C \, \|f\|^2_{L^2(m)}.
$$
All together, we have proved 
$$
\int_0^\infty \|\AA_\eps S_{\BB_\eps}(s) f \|^2_{H^1(m)} \, e^{-2\alpha_0s} \, ds \leq C \, \|f\|^2_{L^2(m)}.
$$
Consequently, using Cauchy-Schwarz inequality, we have 
\beqn \label{eq-DtoC:regL2H1}
\begin{aligned}
&\quad\left(\int_0^\infty \|\AA_\eps S_{\BB_\eps}(s) f \|_{H^1(m)} \, e^{-as} \, ds \right)^2\\
&\leq \int_0^\infty \|\AA_\eps S_{\BB_\eps}(s) f \|^2_{H^1(m)} \,e^{-2\alpha_0 s} \, ds \int_0^\infty  e^{-2 (a-\alpha_0)  s}\, ds \\
&\leq C \, \|f\|^2_{L^2(m)}.
\end{aligned}
\eeqn 
Fom the dissipativity of $\BB_\eps$ in $H^1(m)$ proved in Lemma~\ref{lem-DtoC:BdissipHs} and the fact that $\AA_\eps$ is bounded in $H^1(m)$, we also have 
$$
 \|\AA_\eps S_{\BB_\eps}(s)  \|_{H^1(m) \to H^1(m)} \, e^{-as} \leq C, \qquad \forall \, s \ge 0.
 $$
Using the two last estimates together, we deduce that for any $t \ge 0$
\bean
&&\|Â (\AA_\eps S_{\BB_\eps})^{(*2)}(t) f \|_{H^1(m)} \\
&&\quad\le \int_0^t  \|\AA_\eps S_{\BB_\eps}(t-s)  \|_{H^1(m) \to H^1(m)} \|\AA_\eps S_{\BB_\eps}(s) f \|_{H^1(m)} \, ds 
\\
&&\quad\le C \, e^{at}Â \int_0^\infty  e^{- a s} \,  \|\AA_\eps S_{\BB_\eps}(s) f \|_{ H^1(m)} \, ds 
\\
&&\quad\le C \, e^{at}Â  \,  \| f \|_{L^2 (m)}. 
\eean
We have thus proved
$$
\|Â (\AA_\eps S_{\BB_\eps})^{(*2)}(t)  \|_{L^2(m) \to H^1(m)}  \le C \, e^{at},
$$
which corresponds to the case $(s_1,s_2)=(0,1)$.

Using the same strategy, we can easily obtain that 
$$
 \int_0^\infty \|Â \AA_\eps S_{\BB_\eps}(s) f \|^2_{H^{s}(m)} \, e^{-2as} \, ds \leq C \, \|f\|^2_{H^{s-1}(m)},
 $$
for any $s \ge 2$, and then conclude the proof of the lemma in the case $\eps > 0$.  We refer to \cite{GMM,MM*} for the proof in the case $\eps=0$.
\end{proof}

\begin{lem} \label{lem-DtoC:regularizationL1L2}
Consider $q>d/2$,  $k \in L^1_{2q+1}$  and $M$, $R$, $\eps_0$ so that the conclusions of Lemma~\ref{lem-DtoC:BdissipLp} hold. Then, for any $a \in (-q,0)$, there exists $n \in \N$ such that the following estimate holds for any $\eps \in [0, \eps_0]$:
$$
\forall \, t \geq0, \quad  \|(\AA_\eps S_{\BB_\eps})^{(*n)} (t) \|_{\BBB(L^1(m), L^2(m))}  \leq C_a \, e^{at},
$$
\Black
for some constant $C_a>0$. 
\end{lem}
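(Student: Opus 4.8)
The plan is to follow the scheme of Lemma~\ref{lem-DtoC:regularizationL2Hs}, the one genuinely new ingredient being a \emph{uniform gain of integrability} for the free dynamics,
\[
\|S_{\BB_\eps}(t)\|_{L^1(m)\to L^2(m)}\ \le\ C\,(1+t^{-d/4})\,e^{a't},\qquad t>0,
\]
uniformly in $\eps\in(0,\eps_0]$. Since the case $\eps=0$ is already in \cite{GMM,MM*}, I would fix $\eps\in(0,\eps_0]$; using Lemma~\ref{lem-DtoC:BdissipLp} (with $p=1$ and with $p=2$, both admissible since $q>d/2$) and Lemma~\ref{lem-DtoC:Abounded}, I may choose $M,R,\eps_0$ so that $\AA_\eps S_{\BB_\eps}(t)$ is bounded on $L^1(m)$ and on $L^2(m)$, uniformly in $\eps$, with norm $\le Ce^{a't}$ for any $a'$ above the corresponding dissipativity threshold ($a'>-q$ for $L^1(m)$, $a'>d/2-q$ for $L^2(m)$).

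To prove the displayed bound I would first record a \emph{uniform Nash-type inequality}. Starting from the identity $I_\eps(h)=\int_{\R^d}\widehat h(\xi)^2\,\tfrac{1-\widehat k(\eps\xi)}{\eps^2}\,d\xi$ of Lemma~\ref{lem-DtoC:Iepsilon} and from the pointwise lower bound $\tfrac{1-\widehat k(\eps\xi)}{\eps^2}\ge c\,\min(|\xi|^2,c\eps^{-2})$ (a consequence of the Taylor expansion $\widehat k(\eta)=1-|\eta|^2+\OO(|\eta|^3)$ near the origin and of $1-\widehat k\ge\eta_0>0$ away from it, as in Step~1 of Lemma~\ref{lem-DtoC:Iepsilon}), one splits frequencies at a level $\rho\le\eps^{-1}$, uses $|\widehat h|\le\|h\|_{L^1}$ on low frequencies and optimises in $\rho$ to obtain
\[
I_\eps(h)\ \ge\ c\,\min\!\Big(\ \|h\|_{L^2}^{2+4/d}\,\|h\|_{L^1}^{-4/d}\ ,\ \eps^{-2}\|h\|_{L^2}^2\ \Big).
\]
Next I would set $g_t:=m\,S_{\BB_\eps}(t)f$ and derive a weighted energy inequality $\tfrac{d}{dt}\|g_t\|_{L^2}^2\le -c\,I_\eps(g_t)+2a'\|g_t\|_{L^2}^2$, exactly as in the proofs of Lemma~\ref{lem-DtoC:BdissipLp} (with $p=2$) and Lemma~\ref{lem-DtoC:regularizationL2Hs}: commuting $m$ through $\BB_\eps$ creates a priori $\OO(\eps^{-1})$ remainders, but the cancellation $\int k\,z\,dz=0$ turns them into bounded operators which are absorbed after one integration by parts. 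Combined with $\|g_t\|_{L^1}=\|S_{\BB_\eps}(t)f\|_{L^1(m)}\le Ce^{a't}\|f\|_{L^1(m)}$ from Lemma~\ref{lem-DtoC:BdissipLp} ($p=1$) and with the Nash inequality above, the usual Barenblatt/Moser iteration on $u(t):=\|g_t\|_{L^2}^2$ (when the $\eps^{-2}$-branch is active $u$ decays extremely fast and is harmless; otherwise $u$ is dominated by the self-similar envelope of $\dot u\le -c\,u^{1+2/d}\|g\|_{L^1}^{-4/d}+2a'u$), after regularising $f$ by $L^1(m)\cap L^2(m)$ functions to handle $t\to0$, gives the claimed estimate, hence also $\|\AA_\eps S_{\BB_\eps}(t)\|_{L^1(m)\to L^2(m)}\le C(1+t^{-d/4})e^{a't}$ via Lemma~\ref{lem-DtoC:Abounded}.

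Finally I would remove the time-singularity at $t=0$ by iterating the convolution, as in \cite{GMM,MM*}: in $(\AA_\eps S_{\BB_\eps})^{(*n)}(t)$ one lets a single factor carry the $L^1(m)\to L^2(m)$ gain while the other $n-1$ factors act as uniformly bounded operators (on $L^1(m)$ before it, on $L^2(m)$ after it); for $n$ large the iterated integral converges and, tuning the rates just above the dissipativity thresholds, one obtains $\|(\AA_\eps S_{\BB_\eps})^{(*n)}(t)\|_{\BBB(L^1(m),L^2(m))}\le C_a\,e^{at}$ for the prescribed $a\in(-q,0)$. The main difficulty is the ultracontractivity step: for fixed $\eps>0$ the operator $\BB_\eps$ contains a multiple of the identity and $S_{\BB_\eps}(t)$ does not regularise at all, so the whole $L^1\to L^2$ gain must be extracted from the Dirichlet form $I_\eps$ uniformly as $\eps\to0$ — this is precisely why the Nash inequality has to be stated in the two-regime form above (the plain inequality $\|h\|_{L^2}^{2+4/d}\lesssim I_\eps(h)\|h\|_{L^1}^{4/d}$ is in fact \emph{not} uniform in $\eps$, being violated by functions concentrated at scales $\ll\eps$, and on the $\eps$-scale frequencies one must instead use $\|h\|_{L^2}^2\lesssim\eps^2 I_\eps(h)$) — and it is also where the weight bookkeeping, i.e. commuting $m$ through the nonlocal operator without spoiling the $-I_\eps(g_t)$ term, must be carried out with care.
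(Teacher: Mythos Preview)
There is a genuine gap, and you in fact identify it yourself in the last paragraph without realising that it wrecks the earlier display. The bound
\[
\|S_{\BB_\eps}(t)\|_{L^1(m)\to L^2(m)}\ \le\ C\,(1+t^{-d/4})\,e^{a't}
\]
is \emph{false} for every fixed $\eps>0$: the semigroup $S_{\BB_\eps}$ is not ultracontractive. Your own two--regime Nash inequality shows why the Moser argument breaks. With $u(t)=\|g_t\|_{L^2}^2$ and $L=\|g_t\|_{L^1}$, the differential inequality reads $\dot u\le -c\,\min\bigl(u^{1+2/d}L^{-4/d},\,\eps^{-2}u\bigr)+2a'u$, and the two branches switch at $u\sim\eps^{-d}L^2$. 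For $u$ above this threshold one is in the \emph{linear} regime $\dot u\le(2a'-c\eps^{-2})u$, which gives $u(t)\le u(0)e^{-(c\eps^{-2}-2a')t}$: exponential decay, but \emph{not} independent of $u(0)$. The magic of Nash--Moser is precisely the nonlinearity (from $\dot u\le -cu^{1+2/d}L^{-4/d}$ one gets $u(t)^{-2/d}\ge c't$, a bound free of $u(0)$); once the linear branch is active that mechanism disappears. Concretely, the time to reach the Nash regime from $u(0)$ is $\sim\eps^2\log\bigl(u(0)\eps^dL^{-2}\bigr)$, which diverges along any approximating sequence $f_N\in L^1(m)\cap L^2(m)$ with $\|f_N\|_{L^2(m)}\to\infty$. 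So for fixed $t>0$ no uniform bound survives the limit, and the regularisation--by--approximation device cannot rescue the estimate.

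The paper's argument takes a quite different route, which avoids asking $S_{\BB_\eps}$ to smooth. One passes to the \emph{dual} operators $\AA_\eps^*\phi=k_\eps*(M\chi_R\phi)$ and $\BB_\eps^*$, and shows, via the Dirichlet--form bound and Lemma~\ref{lem-DtoC:Iepsilon}, that $\int_0^\infty\|\AA_\eps^*S_{\BB_\eps^*}(s)\phi\|_{H^1}^2e^{-2bs}\,ds\le C\|\phi\|_{L^2}^2$: each application of $\AA_\eps^*$ (which contains $k_\eps*$) gains one derivative in $L^2$. Iterating as in Lemma~\ref{lem-DtoC:regularizationL2Hs} yields $\|(\AA_\eps^*S_{\BB_\eps^*})^{(*2\ell)}(t)\|_{L^2\to H^\ell}\le Ce^{b't}$; by duality $(S_{\BB_\eps}\AA_\eps)^{(*2\ell)}:H^{-\ell}\to L^2$ uniformly, and taking $\ell>d/2$ one uses the Sobolev embedding $L^1\hookrightarrow H^{-\ell}$ to obtain $(S_{\BB_\eps}\AA_\eps)^{(*2\ell)}:L^1\to L^2$. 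One more convolution with $\AA_\eps$ and $S_{\BB_\eps}$ inserts the weight $m$, and \cite[Lemma~2.17]{GMM} upgrades the exponential rate to any $a>-q$. The essential point is that the $L^1\to L^2$ gain is produced by several \emph{iterated} occurrences of $k_\eps*$ inside $\AA_\eps$, counted in Sobolev regularity and converted to integrability only at the end via $L^1\subset H^{-\ell}$; it is never extracted from the semigroup $S_{\BB_\eps}$ alone, which---as you correctly note---``does not regularise at all''.
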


\begin{proof}
We first introduce the formal dual operators of $\AA_\eps$ and $\BB_\eps$:
$$
\AA_\eps^* \phi := k_\eps * (M \, \chi_R \, \phi), \quad 
\BB_\eps^* \phi := {1 \over \eps^2} ( k_\eps * \phi - \phi) - x \cdot \nabla \phi - k_\eps * (M \, \chi_R \phi).
$$
We use the same computation as the one used to deal with $T_1$ is the proof of Lemma \ref{lem-DtoC:BdissipLp} and Cauchy-Schwarz inequality:
$$
\begin{aligned}
\int_{\R^d} (\BB_\eps^* \phi) \, \phi
&\leq -{1 \over {2 \eps^2}} \int_{\R^d \times \R^d} k_\eps(x-y) \, (\phi(y) - \phi(x))^2 \, dy \, dx \\
&\quad + 
{1 \over {2\eps^2}} \int_{\R^d \times \R^d} (\phi^2(y) - \phi^2(x)) \, k_\eps(x-y) \, dy \, dx \\
&\quad + {d \over 2} \int_{\R^d} \phi^2 + \|k_\eps * (M \, \chi_R \, \phi)\|_{L^2} \, \|\phi\|_{L^2}.
\end{aligned}
$$
We then notice that the second term equals $0$ and we use Young inequality and the fact that $\|k_\eps\|_{L^1}=1$ to get 
$$
\begin{aligned}
\int_{\R^d} (\BB_\eps^* \phi) \, \phi &\leq -{1 \over {2 \eps^2}} \int_{\R^d \times \R^d} k_\eps(x-y) \, (\phi(y) - \phi(x))^2 \, dy \, dx \\
&\quad +
{d \over 2} \int_{\R^d} \phi^2 + {1 \over 2} \|M \, \chi_R \, \phi \|_{L^2}^2 + {1 \over 2} \|\phi \|_{L^2}^2  \\
&\leq - \, I_\eps(\phi) + C \, \int_{\R^d} \phi^2
\end{aligned}
$$
where $I_\eps$ is defined in \eqref{eq-DtoC:Ieps}.
We also have the following inequality:
$$
\begin{aligned}
 I_{\eps}(\chi_R \, \phi) &\leq 
{1 \over \eps^2} \int_{\R^d \times \R^d} k_\eps(x-y)\, \phi^2(x) \, (\chi_R(y) - \chi_R(x))^2 \, dy \, dx \\
&\quad + {1 \over \eps^2} \int_{\R^d \times \R^d} k_\eps(x-y)\, \chi_R^2(y) \, (\phi(y)-\phi(x))^2 \, dy \, dx  \\
&\leq C \, \|\nabla \chi_R\|_{\infty} \int_{\R^d} \phi^2 +2 I_\eps(\phi).
\end{aligned}
$$
If we denote $\phi_t := S_{\BB^*_\eps}(t) \phi$, we thus have
$$
{1 \over 2} {d \over dt} \|\phi_t\|^2_{L^2} \leq - {\lambda} \, \|k_\eps * (\chi_R \, \phi_t)\|^2_{\dot{H}^1} + b \, \|\phi_t\|^2_{L^2}, \quad b > 0.
$$
Multiplying this inequality by $e^{-bt}$, we obtain
$$
{d \over dt} \left( \|\phi_t\|^2_{L^2} \, e^{-bt} \right) \leq  - {2\lambda} \, \|k_\eps * (\chi_R \, \phi_t)\|^2_{\dot{H}^1} \, e^{-bt},Â \quad \forall \, t \geq 0, 
$$
and integrating in time, we get
\beqn \label{eq-DtoC:Iepsilon2}
\|\phi_t\|^2_{L^2} \, e^{-bt} + {2\lambda} \, \int_0^t\|k_\eps * (\chi_R \, \phi_s)\|^2_{\dot{H}^1} \, e^{-bs} \, ds \leq \|\phi\|^2_{L^2(m)}, \quad \forall \, t \ge 0.
\eeqn
We now estimate 
$$
\begin{aligned}
&\quad \int_0^t \|Â \AA_\eps^* \, S_{\BB_\eps^*}(s)\, \phi\|^2_{H^1} \, e^{-2bs} \, ds 
= \int_0^t \|Â \AA_\eps^* \, \phi_s \|^2_{H^1} \, e^{-2bs} \, ds \\
&= \int_0^t \| k_\eps * (M \, \chi_R \, \phi_s)\|^2_{L^2} \,e^{-2bs} \,  ds + \int_0^t \| k_\eps * (M \, \chi_R \, \phi_s) \|^2_{\dot{H}^1} \, e^{-2bs} \,  ds. 
\end{aligned}
$$
Using Young inequality and \eqref{eq-DtoC:Iepsilon2}, we conclude that 
$$
\int_0^\infty \|Â \AA_\eps^* \, S_{\BB_\eps^*}(t)\, \phi\|^2_{H^1} \, e^{-2bs} \, ds \leq C \, \|\phi\|^2_{L^2}.
$$
As in the proof of Lemma \ref{lem-DtoC:regularizationL2Hs}, for any $s \ge 1$, we can then establish that 
$$
  \|Â (\AA_\eps^* \, S_{\BB_\eps^*})^{(*2s)}(t) \|_{L^2 \to H^{s}} \le  C \, e^{b't}, \quad \forall \, t \ge 0, \,\, \forall \, \eps \in (0,\eps_0], 
$$
for some $b' \ge 0$, and by duality
$$
  \|Â (S_{\BB_\eps \AA_\eps})^{(*2s)}(t) \|_{H^{-s} \to L^2} \le  C \, e^{b't}, \quad \forall \, t \ge 0, \,\, \forall \, \eps \in (0,\eps_0].
$$
Taking $\ell>d/2$, so that we can use the continuous Sobolev embedding $L^1(\R^d) \subset H^{-\ell}(\R^d)$, we obtain
$$
 \|Â (S_{\BB_\eps \AA_\eps})^{(*2\ell)}(t) \|_{L^1 \to L^2} \le  C \, e^{b't}.
$$
Noticing next that 
$$
(\AA_\eps S_{\BB_\eps})^{(*(2\ell+1))} = \AA_\eps \, (S_{\BB_\eps} \AA_\eps)^{(*(2\ell))} * S_{\BB_\eps}
$$ 
and using the fact that $\AA_\eps$ is compactly supported combined with Lemma \ref{lem-DtoC:BdissipLp}, we get
\bean
&& \| (\AA_\eps S_{\BB_\eps})^{(*(2\ell+1))}(t) \|_{L^1(m) \to L^2(m)} 
\\
&&\quad\le
 \| \AA_\eps \|_{L^2 \to L^2(m)} \, 
 \bigl\{Â \| ( S_{\BB_\eps} \AA_\eps)^{(*(2\ell))} (\cdot) \|_{L^1  \to L^2 }  *_t   \| S_{\BB_\eps}(\cdot) \|_{L^1(m)  \to L^1 } \bigr\} (t) \\
 && \quad \le C \, e^{b''t}, 
\eean
for some $b'' \ge 0$. 
%
To conclude the proof, we use \cite[Lemma~2.17]{GMM}. Indeed, up to take more convolutions, we are able to recover a good rate in the last estimate. 
We refer to \cite{GMM,MM*} for the proof in the case $\eps=0$.
\end{proof}
\Black

%
%
\smallskip
\subsection{Convergences $\AA_\eps \rightarrow \AA_0$ and $\BB_\eps \rightarrow \BB_0$.}
\begin{lem} \label{lem-DtoC:convergence}
Consider $s \in \N$,   $q>0$ and $k \in L^1_{2q+3}$. The following convergences hold:
$$
\|\AA_\eps -\AA_0\|_{\BBB(H^{s+1}(m),H^s(m))} \xrightarrow[\eps \rightarrow 0]{}0 \quad \text{and} \quad \|\BB_\eps -\BB_0\|_{\BBB(H^{s+3}(m),H^s(m))} \xrightarrow[\eps \rightarrow 0]{}0.
$$
\end{lem}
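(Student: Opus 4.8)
The plan is to reduce everything to an elementary Taylor expansion of $f$, the only genuine point of care being the behaviour of the polynomial weight $m$ under translation. First I would write the two differences out explicitly. From the definitions of the splitting, and using $\chi_R^c=1-\chi_R$, one has
$$
(\AA_\eps-\AA_0)f = M\,\chi_R\,(k_\eps*f-f),\qquad
(\BB_\eps-\BB_0)f = \Big[\tfrac{1}{\eps^2}(k_\eps*f-f)-\Delta f\Big]-(\AA_\eps-\AA_0)f .
$$
Since $H^{s+3}(m)\hookrightarrow H^{s+1}(m)$, and since multiplication by $M\chi_R$ is bounded on every $H^s(m)$ (because $\chi_R$ and all its derivatives are bounded), it suffices to prove the two operator estimates
$$
\|k_\eps*f-f\|_{H^s(m)}\le C\,\eps\,\|k\|_{L^1_{q+1}}\,\|f\|_{H^{s+1}(m)},
$$
$$
\Big\|\tfrac{1}{\eps^2}(k_\eps*f-f)-\Delta f\Big\|_{H^s(m)}\le C\,\eps\,\|k\|_{L^1_{q+3}}\,\|f\|_{H^{s+3}(m)},
$$
for $f\in\DD(\R^d)$ and $\eps\in(0,1]$; the general statement then follows by density, and the moments $\|k\|_{L^1_{q+1}}$, $\|k\|_{L^1_{q+3}}$ are finite because $q\ge0$ and $k\in L^1_{2q+3}$. (The limiting value $\eps=0$ is of course trivial.)

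Next I would establish these two bounds. Writing $(k_\eps*f)(x)=\int_{\R^d} k(z)\,f(x-\eps z)\,dz$ and Taylor expanding $f(x-\eps z)$ around $x$: a first order expansion with integral remainder gives $(k_\eps*f-f)(x)=-\eps\int k(z)\int_0^1 z\cdot\nabla f(x-t\eps z)\,dt\,dz$, which yields the first estimate. For the second, a third order expansion together with the normalisation \eqref{prop-DtoC:k} — which kills the first order term ($\int k(z)\,z\,dz=0$) and reproduces exactly the Laplacian ($\int k(z)\,z\otimes z\,dz=2I_d$) — leads to
$$
\tfrac{1}{\eps^2}(k_\eps*f-f)(x)-\Delta f(x)=-\tfrac{\eps}{2}\int k(z)\int_0^1(1-t)^2\,D^3f(x-t\eps z)[z,z,z]\,dt\,dz .
$$
In both cases differentiating at most $s$ times under the integral sign merely turns $\nabla f$, resp. $D^3f$, into $D^{1+|\gamma|}f$, resp. $D^{3+|\gamma|}f$ with $|\gamma|\le s$, so it only remains to take the $L^2(m)$ norm.

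To close the $L^2(m)$ estimate I would use Minkowski's integral inequality to pull the $z$-integration outside the norm, and then the elementary inequality $\langle y+a\rangle\le\sqrt2\,\langle y\rangle\langle a\rangle$, which gives $m(y+t\eps z)\le C\,\langle z\rangle^q\,m(y)$ uniformly for $t\in[0,1]$ and $\eps\in(0,1]$, hence $\|D^{j+|\gamma|}f(\cdot-t\eps z)\|_{L^2(m)}\le C\langle z\rangle^q\|f\|_{H^{s+j}(m)}$ for $j\in\{1,3\}$. Since $|z|^j\langle z\rangle^q\le\langle z\rangle^{q+j}$, the remaining $z$-integral is controlled by $\|k\|_{L^1_{q+j}}<\infty$, and both estimates follow. (For $\eps=0$ there is nothing to prove; this is also consistent with the corresponding computations in \cite{GMM,MM*}.)

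I do not expect a serious obstacle here: the whole lemma is a Taylor-expansion computation. The only thing one really has to be careful with is the uniform control of the translated weight $m(\cdot-t\eps z)$ and the absorption of the resulting polynomial factor $\langle z\rangle^q$ into a moment of $k$ — this is why one wants $k\in L^1_{2q+3}$ (a slightly generous but harmless requirement) — and the bookkeeping of how many derivatives are lost: one for $\AA_\eps-\AA_0$ and three for $\BB_\eps-\BB_0$, the latter being exactly the order of the Taylor remainder left after the two cancellations supplied by \eqref{prop-DtoC:k}.
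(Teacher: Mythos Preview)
Your proof is correct and follows essentially the same approach as the paper: reduce $\BB_\eps-\BB_0$ to $\Lambda_\eps-\Lambda_0$, Taylor-expand $f$ to third order, and use the moment conditions \eqref{prop-DtoC:k} on $k$ to leave an $O(\eps)$ remainder involving $D^3f$, then control the weighted $L^2$ norm of the remainder via the translation inequality $m(x)\le C\langle z\rangle^q m(x\pm t\eps z)$. The only cosmetic differences are that the paper handles $\AA_\eps-\AA_0$ via Plancherel (the compact support of $\chi_R$ makes the weight irrelevant there) instead of a first-order Taylor expansion in the weighted space, and bounds the $L^2(m)$ remainder by Jensen rather than Minkowski --- your route in fact consumes only $\|k\|_{L^1_{q+3}}$ rather than $\|k\|_{L^1_{2q+3}}$.
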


\begin{proof}
\noindent {\it Step 1.} We first deal with $\AA_\eps$ in the case $s=0$. Using that $\chi \in \DD(\R^d)$ and $k \in L^1_1(\R^d)$, we have 
\bean
\|\AA_\eps f - \AA_0 f\|_{L^2(m)} &=& \|M \, \chi_R \, (k_\eps * f - f) \, m \|_{L^2} \leq C \, \|k_\eps * f - f\|_{L^2} 
\\
&=& C \, \|Â (\widehat{k_\eps} - 1) \, \widehat f \|_{L^2} \le C \, \eps \,  \|f\|_{H^1}.
\eean
Concerning the first derivative, writing that 
$$
\partial_x (\AA_\eps f - \AA_0 f) = M \, (\partial_x \chi_R) \, (k_\eps * f - f) + M \, \chi_R \, (k_\eps * \partial_x f - \partial_x f)
$$
and using that $\partial_x \chi_R$ is uniformly bounded 
as well as $\chi_R$, we obtain the result. We omit the details of the proof for higher order derivatives. 

\smallskip
\noindent {\it Step 2.} In order to prove the second part of the result, we just have to prove
$$
\|\Lambda_\eps -\Lambda_0\|_{\BBB(H^{s+3}(m),H^s(m))} \xrightarrow[\eps \rightarrow 0]{}0.
$$
Using \eqref{prop-DtoC:k}, we have 
$$
\Lambda_\eps f(x) - \Lambda_0 f(x) = {1 \over \eps^2} \int_{\R^d} k_\eps(x-y) (f(y) - f(x)) \, dy - \Delta f(x). 
$$
A Taylor expansion of $f$ gives
$$
\begin{aligned}
 f(y)-f(x) &=  (y-x) \cdot \nabla f(x) + {1 \over 2}Â  D^2f(x)(y-x,y-x) \\
 &\quad + {1 \over 2} \int_0^1 (1-s)^2 D^3f(x+s(y-x)) (y-x,y-x,y-x) \, ds. 
\end{aligned}
$$
We then observe that, because of \eqref{prop-DtoC:k}, the integral in the $y$ variable of the gradient term cancels and the contribution of the second term is precisely  $ \Delta f(x)$. We deduce that 
$$
 \Lambda_\eps f(x) - \Lambda_0 f(x) = {\eps \over 2} \int_{\R^d} k(z)  \int_0^1 (1-s)^2 D^3f(x+s\eps z) (z,z,z) \, ds \, dz.
 $$
Consequently, using Jensen inequality and the fact that  $k \in L^1_{2q+3}$, we get
$$
\begin{aligned}
&\quad  \|Â \Lambda_\eps - \Lambda_0\|_{L^2(m)} ^2 \\
&\leq C \, \eps^2 \int_{\R^d} \int_{\R^d}  k(z) \, |z|^3 \int_0^1 |D^3f(x+s\eps z)|^2  \,  m^2(x+s\eps z) \, m^2(s \eps z) \, ds \, dz \, dx   \\
&\leq C \, \eps^2  \, \|f\|_{H^3(m)} ^2\xrightarrow[\eps \rightarrow 0]{}0.
\end{aligned}
$$
This concludes the proof of the second part in the case $s=0$. The proof for $s > 0$ follows from the fact that the operator $\partial_x$ commutes with $\Lambda_\eps - \Lambda_0$. 
\end{proof}

\smallskip
\subsection{Spectral analysis}
\begin{lem} \label{lem-DtoC:Kato}
For any $\eps>0$, $\Lambda_\eps$ satisfies Kato's inequalities:
$$
\forall \, f \in D(\Lambda_\eps), \quad \Lambda_\eps \left(\beta(f)\right) \geq \beta'(f)\, (\Lambda_\eps f), \quad \beta(s) = |s|. 
$$
It follows that for any $\eps>0$, the semigroup associated to $\Lambda_\eps$ is positive in the sense that  $S_{\Lambda_\eps}(t) f \geq 0$ for any $t \geq 0$ if $f \in L^1(m)$ and $f\geq 0$. 
\end{lem}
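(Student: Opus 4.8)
\textit{Proof strategy.} The plan is to verify Kato's inequality separately on the two pieces of $\Lambda_\eps$ and then to deduce positivity of the semigroup by the now-standard argument. Write $\Lambda_\eps f = \eps^{-2}(k_\eps * f - f) + \hbox{div}(xf)$, take $\beta(s)=|s|$ with $\beta'(s)=\hbox{sign}(s)$. As usual one first argues for $f\in\DD(\R^d)$ (and replaces $\beta$ by the smooth approximations $\beta_\delta(s):=(s^2+\delta^2)^{1/2}-\delta$, letting $\delta\to0$ at the end, so that $\beta'(f)$ is meaningful where $f=0$), and then passes to general $f\in D(\Lambda_\eps)$; since here the diffusion is a \emph{bounded} operator, these distributional points are even milder than in the classical $\Delta$ case. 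For the transport part one has the pointwise identity $\hbox{div}(x\,|f|)=x\cdot\nabla|f|+d\,|f|=\hbox{sign}(f)\,(x\cdot\nabla f)+d\,|f|=\hbox{sign}(f)\,\hbox{div}(xf)$, using $\nabla|f|=\hbox{sign}(f)\,\nabla f$, so Kato's inequality holds here as an equality.

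For the convolution part, $k_\eps\ge0$ (from \eqref{prop-DtoC:kbisbis} and the nonnegativity of $k$) together with $\|k_\eps\|_{L^1}=1$ (from \eqref{prop-DtoC:k}) gives, for every $x$,
$$
(k_\eps*|f|)(x)=\int_{\R^d}k_\eps(x-y)\,|f(y)|\,dy\ \ge\ \bigl|(k_\eps*f)(x)\bigr|\ \ge\ \hbox{sign}(f(x))\,(k_\eps*f)(x),
$$
hence $\eps^{-2}(k_\eps*|f|-|f|)\ge\hbox{sign}(f)\,\eps^{-2}(k_\eps*f-f)$. Adding the two contributions yields $\Lambda_\eps(\beta(f))\ge\beta'(f)\,(\Lambda_\eps f)$, which is the claimed Kato inequality.

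For the positivity of $S_{\Lambda_\eps}$ we indicate two routes. The first is the one advertised by the statement: Kato's inequality at the level of the generator implies the modulus domination $|S_{\Lambda_\eps}(t)f|\le S_{\Lambda_\eps}(t)|f|$ for all $t\ge0$; indeed, writing $f(t)=S_{\Lambda_\eps}(t)f_0$ and $g(t)=S_{\Lambda_\eps}(t)|f_0|$, the function $w:=g-|f|$ satisfies $w(0)=0$ and $\partial_t w\ge\Lambda_\eps w$ in the weak sense, and a Gronwall/duality argument (as in \cite{GMM,MS}) forces $w\ge0$; applying this to $f_0\ge0$ gives $S_{\Lambda_\eps}(t)f_0=|S_{\Lambda_\eps}(t)f_0|\ge0$. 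Alternatively, and more directly, one splits $\Lambda_\eps=\eps^{-2}\,k_\eps*(\cdot)+\bigl(-\eps^{-2}(\cdot)+\hbox{div}(x\,\cdot)\bigr)$: the first operator is positivity preserving since $k_\eps\ge0$, while the second generates the explicit semigroup $g_0\mapsto e^{(d-\eps^{-2})t}\,g_0(e^t\,\cdot)$, which is positivity preserving; the Dyson--Phillips series then represents $S_{\Lambda_\eps}(t)$ as a norm-convergent sum of positive operators.

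The computations are elementary; the only point requiring genuine care is the rigorous passage from $f\in\DD(\R^d)$ and the regularized convexities $\beta_\delta$ to general $f\in D(\Lambda_\eps)$, and the implication ``Kato's inequality $\Rightarrow$ positivity of the semigroup'', both of which are classical and for which we will simply invoke the corresponding arguments of \cite{GMM,MS} (or bypass them via the positive splitting above).
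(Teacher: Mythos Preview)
Your proof of Kato's inequality is correct and essentially identical to the paper's: both split $\Lambda_\eps$ into the convolution part and the transport part and verify the inequality on each piece pointwise.

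For the positivity of the semigroup, your argument is correct but the paper's is shorter and more self-contained. Rather than invoking an abstract ``Kato $\Rightarrow$ positivity'' result or Dyson--Phillips, the paper exploits mass conservation directly: take $f_0\le0$, set $\beta(s)=s_+=(|s|+s)/2$, and use Kato's inequality (which transfers from $|\cdot|$ to $s_+$ by linearity) to get $\partial_t\beta(f_t)\le\Lambda_\eps\beta(f_t)$. Integrating over $\R^d$ and using $\Lambda_\eps^*1=0$ gives $\tfrac{d}{dt}\int\beta(f_t)\le0$, hence $0\le\int(f_t)_+\le\int(f_0)_+=0$, so $f_t\le0$. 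This avoids any Gronwall/duality machinery or appeal to external references, at the cost of using the specific structural fact that $\Lambda_\eps$ is mass-conserving. Your Dyson--Phillips alternative is also perfectly valid here since the nonlocal part is bounded.
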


\begin{proof}
First, we have
$$
\begin{aligned}
&\quad \hbox{sign} f(x) \, \Lambda_\eps f(x) \\
&= {1 \over \eps^2} \int_{\R^d} k_\eps(x-y) \, (f(y) - f(x)) \, dy \,\hbox{sign}f(x) \\
&\quad + d \, f(x) \, \hbox{sign} f(x) + x \cdot \nabla f(x) \,  \hbox{sign} f(x) \\
&\leq {1 \over \eps^2} \int_{\R^d} k_\eps(x-y) \, (|f|(y) - |f|(x)) \, dy+ d \, |f|(x) + x \cdot \nabla |f|(x) = \Lambda_\eps |f|(x),
\end{aligned}
$$
which ends the proof of the Kato inequality. 


We consider $f \leq 0$ and denote $f(t) := S_{\Lambda_\eps}(t)f$. We define the function $\beta(s) = s_+ = (|s|+s)/2$. 
Using  Kato's inequality, we have  $\partial_t \beta(f_t) \leq \Lambda_\eps \beta(f_t)$, and then 
$$
0 \leÂ \int_{\R^d} \beta(f_t)  \leq \int_{\R^d} \beta(f) = 0, \quad \forall \, t \ge 0, 
$$
from which we deduce $f_t \le 0$ for any $t \ge 0$. 
\end{proof}
The operator $-\Lambda_\eps$ satisfies the following form of the strong maximum principle. 

\begin{lem}\label{lem-DtoC:strongPM} Any nonnegative eigenfunction associated to the eigenvalue $0$ is positive. In other words, we have 
$$
f \in D(\Lambda_\eps), \quad \Lambda_\eps f = 0, \quad f \ge 0, \quad f \not= 0 \quad\hbox{implies}\quad f > 0.
$$
 \end{lem}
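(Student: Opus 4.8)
The plan is to prove the strong maximum principle by exploiting the positivity-improving nature of the operator $\kappa_\eps * \cdot$ together with the transport term $\hbox{div}(xf)$, and then propagating positivity from any point where $f>0$ to all of $\R^d$. Assume $f \in D(\Lambda_\eps)$, $f \ge 0$, $f \not\equiv 0$ and $\Lambda_\eps f = 0$, i.e.
$$
{1 \over \eps^2}(k_\eps * f - f) + \hbox{div}(xf) = 0.
$$
Rewriting this as $(d + {1 \over \eps^2}) f + x \cdot \nabla f = {1 \over \eps^2} k_\eps * f$, we see that $f$ solves a first-order linear transport equation along the flow $\dot X = X$ with a nonnegative source term ${1 \over \eps^2} k_\eps * f \ge 0$ and an absorption coefficient. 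Integrating along the characteristics $t \mapsto e^{-t} x$ (which flow toward the origin as $t \to +\infty$), one obtains a representation formula
$$
f(x) = {1 \over \eps^2} \int_0^\infty e^{-(d + 1/\eps^2) t} \, (k_\eps * f)(e^{-t} x) \, dt,
$$
valid at least in a suitable weak/integrated sense (this is where one must be a little careful, since $f$ is only known a priori to be an $L^1_r$ function; one should justify the formula by a mollification/duality argument, or by working with the dual semigroup).

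From this representation the result follows in two steps. First, since $k \ge \kappa_0 \mathds{1}_{B(0,\rho)}$ by \eqref{prop-DtoC:kbisbis}, the convolution $k_\eps * f$ is strictly positive wherever $f$ has positive mass nearby: more precisely, $(k_\eps * f)(y) \ge \kappa_\eps \int_{B(y,\eps\rho)} f > 0$ as soon as $f$ is not a.e. zero on $B(y, \eps\rho)$. Since $f \not\equiv 0$ and $f \ge 0$, there is a point $x_0$ and a ball around it on which $\int f > 0$; hence $k_\eps * f > 0$ on a nonempty open set $U$. Second, the representation formula shows $f(x) > 0$ whenever the ray $\{e^{-t} x : t \ge 0\}$ meets $U$; and since every ray through the origin eventually enters any fixed ball around $0$... — more carefully, one iterates: $f > 0$ on some open set $\Rightarrow$ $k_\eps * f > 0$ on an $\eps\rho$-neighbourhood of that set $\Rightarrow$ $f > 0$ on the union of all backward characteristics through that neighbourhood, which is a strictly larger open cone-like region. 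Iterating the alternation between the spreading effect of $k_\eps*\cdot$ (radius $\eps\rho$ each time) and the dilation effect of the characteristic flow, one covers all of $\R^d$, giving $f > 0$ everywhere.

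The main obstacle I expect is the rigorous justification of the characteristic/representation formula and of the iteration in the low-regularity setting: $f$ is only an $L^1_r$ eigenfunction, so pointwise statements like "$f(x) > 0$" must be interpreted correctly, and one should really argue at the level of the semigroup $S_{\Lambda_\eps}$ (which is positivity preserving by Lemma~\ref{lem-DtoC:Kato}) rather than the generator. A clean alternative is: since $\Lambda_\eps f = 0$, we have $S_{\Lambda_\eps}(t) f = f$ for all $t$; use the Duhamel-type lower bound $S_{\Lambda_\eps}(t) \ge S_{\BB_\eps}(t)$ combined with $S_{\BB_\eps}(t) \ge$ (pure transport-and-absorption semigroup), together with one application of $\AA_\eps$-type positivity from the convolution, to deduce that $f = S_{\Lambda_\eps}(t) f$ is bounded below by a strictly positive function on an expanding family of sets exhausting $\R^d$ as $t \to \infty$. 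Either way, the regularizing/spreading property of $k_\eps * \cdot$ under \eqref{prop-DtoC:kbisbis} is the engine, and the harmonic confinement flow guarantees the spreading reaches every point; elliptic-type strong maximum principle arguments (Hopf lemma) are not available here because the ``diffusion'' is nonlocal, so this characteristic-spreading argument is the natural substitute.
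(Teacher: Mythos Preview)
Your overall strategy — write a Duhamel/representation formula based on the transport part, then iterate the spreading effect of $k_\eps * \cdot$ using \eqref{prop-DtoC:kbisbis} — is exactly the paper's approach. Your ``clean alternative'' using $f = S_{\Lambda_\eps}(t) f$ together with the splitting $\Lambda_\eps = \CC + \DD$, $\CC f := \eps^{-2} k_\eps * f$, $\DD f := x\cdot\nabla f + (d-\eps^{-2}) f$, and the Duhamel inequality $f \ge \int_0^t S_\DD(s)\,\CC f\,ds$, is precisely what the paper does; the iteration then enlarges the positivity ball by a fixed amount (roughly $\eps\rho$) at each step.

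However, your first derivation contains two sign errors that you should fix. From $\Lambda_\eps f=0$ one gets
\[
x\cdot\nabla f + \Bigl(d - \tfrac{1}{\eps^2}\Bigr) f \;=\; -\,\tfrac{1}{\eps^2}\, k_\eps * f,
\]
so the absorption rate is $\eps^{-2}-d$, not $d+\eps^{-2}$, and the correct representation (valid for $\eps$ small so that $\eps^{-2}>d$) is
\[
f(x) \;=\; \tfrac{1}{\eps^2}\int_0^\infty e^{-(\eps^{-2}-d)\,t}\,(k_\eps * f)(e^{t} x)\,dt,
\]
i.e.\ the evaluation point is $e^{t} x$, flowing \emph{outward}, not $e^{-t} x$. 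This matters for the geometry of the iteration: the semigroup $S_\DD(s)$ maps $\mathds{1}_{B(x_0,r)}$ to a function supported on the contracted ball $B(e^{-s}x_0,e^{-s}r)$, and it is the repeated application of $\CC$ (not the flow) that enlarges the support. With these corrections your argument goes through and coincides with the paper's.
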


\begin{proof}Â We define 
$$
\CC f = {1 \over \eps^2}  Â k_\eps * f, \quad \DD f = x \cdot \nabla_x f + \lambda \, f, \quad \lambda := d - {1 \over \eps^2}
$$
and the semigroup 
$$
S_\DD(t) g := g(e^{t}  Â x) \, e^{\lambda t}
$$
with generator $\DD$. Thanks to the Duhamel formula
$$
S_{\Lambda_\eps} (t) = S_\DD(t) + \int_0^t S_\DD(s) \, \CC  S_\Lambda(t-s) \, ds,
$$
the eigenfunction $f$ satisfies 
\bean
f 
&=&  S_{\Lambda_\eps} (t) f = S_\DD(t) f + \int_0^t S_\DD(s) \, \CC S_{\Lambda_\eps}(t-s) f \, ds
\\
&\ge&    \int_0^t S_\DD(s) \, \CC f \, ds   \quad \forall \, t > 0.
\eean
By assumption, there exists $x_0 \in \R^d$ such that $f \not\equiv 0$ on $B(x_0,\rho/2)$. As a consequence, denoting $\vartheta := \|  Â f \|_{L^1(B(x_0,\rho/2))} > 0$, 
we have 
$$
\CC f \ge {\kappa_0 \, \vartheta \over \eps^2} \, \mathds{1}_{B(x_0,\rho/2)}, 
$$
and then 
\bean
f  \ge    {\kappa_0 \, \vartheta \over \eps^2} \, \sup_{t > 0}   \int_0^t e^{\lambda s} \, \mathds{1}_{B(e^{-s}  Â x_0,e^{-t}  Â  \rho/2)}  \, ds  \ge  \kappa_1   \mathds{1}_{B(x_0,\rho/4)}, \quad \kappa_1 > 0.  
\eean
Using that lower bound, we obtain 
$$
\CC f \ge \theta_d {\kappa_0 \, \kappa_{i-1} \over \eps^2} \, \mathds{1}_{B(x_0, u_i \rho)}, 
\quad\hbox{and then} \,\,  
f  \ge    \kappa_i  \mathds{1}_{B(x_0, v_i \rho)},
$$
with $i = 2$, $u_2 = 1$, $\kappa_2 > 0$,  $v_2 = 3/4$. Repeating once more the argument, we get the same  lower estimate with $i=3$, $u_3 = 7/4$, $\kappa_3 > 0$ and $v_3 = 3/2$. By an induction argument, we finally get $f > 0$ on $\R^d$.
\end{proof}

\medskip We are now able to prove Theorem~\ref{theo-DtoC:DtoC}.  We suppose that the assumptions of Theorem~\ref{theo-DtoC:DtoC} hold in what follows and thus consider $r>d/2$ and also $r_0 > \max(r+d/2, 5+ d/2)$.

\medskip\noindent
{\sl Proof of part (1) in Theorem~\ref{theo-DtoC:DtoC}. }Â 
Using Lemmas \ref{lem-DtoC:Abounded}-\ref{lem-DtoC:BdissipHs}-\ref{lem-DtoC:BdissipLp}, \ref{lem-DtoC:Kato}, \ref{lem-DtoC:strongPM} and the fact that $\Lambda_\eps^*1=0$, we can apply Krein-Rutman theorem which implies that for any $\eps>0$, there exists a unique $G_\eps >0$ such that $\|G_\eps\|_{L^1}=1$, $\Lambda_\eps G_\eps =0$ and $\Pi_\eps f = \langle f \rangle G_\eps$. It also implies that for any $\eps >0$, there exists $a_\eps<0$ such that in $X=L^1_r$ or $X=H^s_{r_0}$ for any $s \in \N$, there holds
$$
\Sigma(\Lambda_\eps) \cap  D_{a_\eps} = \{0\}
$$
and 
\beqn\label{eq-DtoC:EstimEps}
\forall \, t \ge 0, \quad \|Â S_{\Lambda_\eps}(t) f - \langle f \rangle \, G_\eps\|_{X} \leq e^{a t} \| f - \langle f \rangle \, G_\eps\|_{X}, \quad \forall \, a >a_\eps.
\eeqn

%

\medskip\noindent
{\sl Proof of part (2) in Theorem~\ref{theo-DtoC:DtoC}. }Â We now have to establish that estimate~\eqref{eq-DtoC:EstimEps} can be obtained uniformly in $\eps \in [0,\eps_0]$. In order to do so, we use a perturbation argument in the same line as in \cite{MMcmp,Granular-IT*} to prove that our operator $\Lambda_\eps$ has a spectral gap in $H^3_{r_0}$ which does not depend on $\eps$.
 
 First, we introduce the following spaces:
 $$
 X_1 := H^6_{r_0+1} \subset X_0 := H^3_{r_0} \subset X_{-1} := L^2_{r_0},
 $$
notice that $r_0>d/2+5$ implies that the conclusion of Lemma \ref{lem-DtoC:BdissipHs} is satisfied in the three spaces $X_i$, $i=-1,0,1$. 

 One can notice that we also have the following embedding
 $$
 X_1 \subset H^5_{r_0+1} \subset D_{L^2_{r_0}}(\Lambda_\eps) = D_{L^2_{r_0}}(\BB_\eps) \subset D_{L^2_{r_0}}(\AA_\eps) \subset X_0.
 $$
 
   
 We now summarize the necessary results to apply a perturbative argument (obtained thanks to Lemmas \ref{lem-DtoC:convergence}, \ref{lem-DtoC:Abounded}, \ref{lem-DtoC:BdissipLp}, \ref{lem-DtoC:BdissipHs} and~\ref{lem-DtoC:regularizationL2Hs} and from \cite{GMM,MM*}). 
 
 There exist $a_0<0$ and $\eps_0>0$ such that for any $\eps \in [0,\eps_0]$:
 \begin{enumerate}
 \item[(i)] For any $i=-1,0,1$, $\AA_\eps \in \BBB(X_i)$ uniformly in $\eps$. 
 \item[(ii)] For any $a>a_0$ and $\ell \ge 0$, there exists $C_{\ell,a}>0$ such that 
 $$
 \forall \, i=-1,0,1, \quad \forall \, t \ge 0, \quad \| S_{\BB_\eps} * (\AA_\eps S_{\BB_\eps})^{(*\ell)}(t)\|_{X_i \rightarrow X_i} \leq C_{\ell,a} \, e^{at}.
 $$
 \item[(iii)] For any $a>a_0$, there exist $n \geq 1$ and $C_{n,a}>0$ such that 
  $$ 
 \forall \, i=-1,0, \quad  \| (\AA_\eps S_{\BB_\eps})^{(*n)}(t) \|_{X_i \rightarrow X_{i+1}} \leq C_{n,a}\, e^{at}.
 $$ 
\item[(iv)] There exists a function $\eta(\eps) \xrightarrow[\eps \rightarrow 0]{}0$ such that
$$
\forall \, i=-1,0, \quad \|\AA_\eps - \AA_0 \|_{X_i \rightarrow X_i} \leq \eta(\eps) \quad \text{and} \quad 
\|\BB_\eps - \BB_0\|_{X_i \rightarrow X_{i-1}} \leq \eta(\eps).
$$
\item[(v)] $\Sigma (\Lambda_0) \cap  D_{a_0} = \{ 0 \}$ in spaces $X_i$, $i=-1,0,1$,
where $0$ is a one dimensional eigenvalue. 
\end{enumerate}

\smallskip

Using a perturbative argument as in \cite{Granular-IT*}, from the facts (i)--(v), we can deduce the following proposition:
\begin{prop}
There exist $a_0<0$ and $\eps_0>0$ such that for any $\eps \in [0,\eps_0]$, the following properties hold in $X_0=H^3_{r_0}$:
\begin{enumerate}
\item $\Sigma (\Lambda_\eps) \cap  D_{a_0} = \{0\}$;
\item for any $f \in X_0$ and any $a>a_0$, 
$$
\| S_{\Lambda_\eps}(t) f - G_\eps  \langle f \rangle  \|_{X_0} \leq C_a \, e^{at} \, \| f- G_\eps \langle f \rangle\|_{X_0}, \quad \forall \, t \ge0
$$
for some explicit constant $C_a>0$. 
\end{enumerate}
\end{prop}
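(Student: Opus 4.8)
The plan is to invoke the abstract perturbation machinery of \cite{Granular-IT*} (in the improved form announced for Section~\ref{sec:DFFP-FFP}), applied to the family $\Lambda_\eps = \AA_\eps + \BB_\eps$ acting on the triple $X_{-1} = L^2_{r_0} \supset X_0 = H^3_{r_0} \supset X_1 = H^6_{r_0+1}$. The key point is that all the hypotheses required by that abstract theorem have already been verified in the preceding lemmas and are collected in the list (i)--(v) above: uniform boundedness of $\AA_\eps$ (Lemma~\ref{lem-DtoC:Abounded}), uniform hypodissipativity of $\BB_\eps$ with a growth bound $a_0 < 0$ in each $X_i$ (Lemmas~\ref{lem-DtoC:BdissipLp}, \ref{lem-DtoC:BdissipHs}), uniform regularization of the time-indexed convolution iterates $(\AA_\eps S_{\BB_\eps})^{(*n)}$ from $X_i$ to $X_{i+1}$ (Lemma~\ref{lem-DtoC:regularizationL2Hs}), the convergences $\AA_\eps \to \AA_0$ and $\BB_\eps \to \BB_0$ in the appropriate operator norms (Lemma~\ref{lem-DtoC:convergence}), and the spectral structure of the limit operator $\Lambda_0$ ($0$ is a simple eigenvalue, isolated in $D_{a_0}$), which is the known result for the classical Fokker--Planck equation from \cite{GMM,MM*}.

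First I would recall, following Mouhot's factorization, that the resolvent of $\Lambda_\eps$ can be written on $D_{a_0}$ via the iterated Duhamel/Dyson series
$$
R_{\Lambda_\eps}(z) = \sum_{\ell=0}^{n-1} R_{\BB_\eps}(z)\,(\AA_\eps R_{\BB_\eps}(z))^{\ell} + R_{\BB_\eps}(z)\,(\AA_\eps R_{\BB_\eps}(z))^{n-1}\AA_\eps R_{\Lambda_\eps}(z),
$$
and that properties (i)--(iii) guarantee that the ``nice'' part of this series is holomorphic and uniformly bounded on $D_{a_0}$ in $\BBB(X_i)$, while the regularizing iterate gains one degree of Sobolev/weight regularity, so that $\Sigma(\Lambda_\eps) \cap D_{a_0}$ consists of finitely many eigenvalues of finite multiplicity with associated spectral projector $\Pi_\eps$ of finite rank. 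Then, using (iv), one estimates the difference of resolvents $R_{\Lambda_\eps}(z) - R_{\Lambda_0}(z)$ on a fixed circle $\Gamma$ around $0$ inside $D_{a_0}$: writing everything in terms of the factorized series and inserting $\AA_\eps - \AA_0$ and $\BB_\eps - \BB_0$ term by term, one gets $\|R_{\Lambda_\eps}(z) - R_{\Lambda_0}(z)\|_{\BBB(X_{-1},X_0)} \le C\,\eta(\eps)$ uniformly on $\Gamma$; integrating over $\Gamma$ yields $\|\Pi_\eps - \Pi_0\| \le C\,\eta(\eps)$, hence $\mathrm{rank}\,\Pi_\eps = \mathrm{rank}\,\Pi_0 = 1$ for $\eps$ small, and since $0 \in \Sigma(\Lambda_\eps)$ always (because $\Lambda_\eps^* 1 = 0$ and $G_\eps$ exists by part~(1)), the only eigenvalue of $\Lambda_\eps$ in $D_{a_0}$ is $0$, simple. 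This gives conclusion~(1) of the proposition. For conclusion~(2), one combines the spectral separation with the semigroup representation: $S_{\Lambda_\eps}(t)(\mathrm{Id} - \Pi_\eps)$ is recovered by inverse Laplace transform of $R_{\Lambda_\eps}(z)$ along a vertical line $\Re e\, z = a \in (a_0,0)$, and the uniform bounds on the factorized resolvent together with the regularization estimates (iii) give $\|S_{\Lambda_\eps}(t)(\mathrm{Id} - \Pi_\eps)\|_{\BBB(X_0)} \le C_a e^{at}$ uniformly in $\eps \in [0,\eps_0]$; since $\Pi_\eps f = \langle f\rangle G_\eps$, this is exactly the claimed estimate.

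The main obstacle — and the reason an ``improved'' perturbation theorem is needed — is that the perturbation here is \emph{singular}: $\BB_\eps - \BB_0$ is only small as an operator $X_i \to X_{i-1}$ (it loses regularity, by Lemma~\ref{lem-DtoC:convergence} one in fact loses three derivatives, which is why the chain $X_1 \subset X_0 \subset X_{-1}$ with a genuine gain of regularity at each stage is essential), and $\Lambda_\eps$ is not a bounded perturbation of $\Lambda_0$ in any single space. One must therefore run the resolvent-difference estimate across the scale of spaces, using the regularizing factor $(\AA_\eps S_{\BB_\eps})^{(*n)}: X_i \to X_{i+1}$ to absorb the derivative loss coming from $\BB_\eps - \BB_0$, and check that the number of convolution iterations $n$ needed for regularization is compatible with the number of spaces in the scale. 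This bookkeeping — making sure every insertion of $\AA_\eps - \AA_0$ or $\BB_\eps - \BB_0$ is flanked by enough smoothing — is the technical heart; once it is arranged, the argument is the by-now standard enlargement-and-perturbation scheme, and I would simply cite \cite{Granular-IT*} (with the refinement given in Section~\ref{sec:DFFP-FFP}) for the abstract statement and note that (i)--(v) are precisely its hypotheses. A final minor point is to transfer the conclusion from $X_0 = H^3_{r_0}$ back to $X = L^1_r$, which is done by the enlargement argument of \cite{GMM} using once more the regularization Lemma~\ref{lem-DtoC:regularizationL1L2}; this is what bridges the proposition to the statement of Theorem~\ref{theo-DtoC:DtoC}(2).
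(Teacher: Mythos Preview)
Your proposal is correct and follows exactly the paper's approach: the paper itself does not spell out a proof but simply says ``Using a perturbative argument as in \cite{Granular-IT*}, from the facts (i)--(v), we can deduce the following proposition,'' and you have correctly identified (i)--(v) as the hypotheses of that abstract theorem and sketched its content. One small over-reach: you invoke the \emph{improved} perturbation theorem from Section~\ref{sec:DFFP-FFP}, but for the present proposition the standard version from \cite{Granular-IT*} is enough, since here (unlike in Section~\ref{sec:DFFP-FFP}) the dissipativity and regularization estimates (ii)--(iii) are available \emph{uniformly} in $\eps$ on all three spaces $X_{-1},X_0,X_1$, not just for the limit operator; the relaxation is only needed in the discrete-to-fractional case.
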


To end the proof of Theorem \ref{theo-DtoC:DtoC}, we have to enlarge the space in which the conclusions of the previous Proposition hold. 
To do that, we use an extension argument (see \cite{GMM} or \cite[Theorem 1.1]{MMcmp}) and Lemmas~\ref{lem-DtoC:Abounded},~\ref{lem-DtoC:BdissipLp}-\ref{lem-DtoC:BdissipHs} and~\ref{lem-DtoC:regularizationL2Hs}-\ref{lem-DtoC:regularizationL1L2}. Our ``small'' space is $H^3_{r_0}$ and our ``large'' space is $L^1_r$ (notice that $r_0> r+ d/2$ implies the embedding $H^3_{r_0} \subset L^1_r$). 


\bigskip

\section{From fractional to classical Fokker-Planck equation} \label{sec:FFP-FP}
In this part, we denote $\alpha := 2-\eps \in (0,2]$ and we deal with the equations
\beqn \label{eq-FtoC:FFP}
\left\{
\begin{aligned}
&\partial_t f =- (-\Delta)^{\alpha/2}f + \hbox{div}(xf) = \Lambda_{2-\alpha} f =: \LL_{\alpha} f, \quad \alpha \in (0,2) \\
&\partial_t f = \Delta f + \hbox{div}(xf) = \Lambda_0 f =: \LL_2 f.
\end{aligned}
\right.
\eeqn
We here recall that the fractional Laplacian $\Delta^{\alpha/2}f$ is defined for a Schwartz function $f$ through the integral formula \eqref{eq-intro:opfrac}.
Moreover, the constant $c_\alpha$ in~\eqref{eq-intro:opfrac} is chosen such that 
$$
 {c_\alpha \over 2} \, \int_{|z| \leq 1} \frac{z_1^2}{|z|^{d+\alpha}}=1 , 
$$
which implies that $c_\alpha\approx (2-\alpha)$. By duality, we can extend the definition of the fractional Laplacian to the following class of functions:
$$
 \left\{ f \in L^1_{\text{loc}}(\R^d) , \, \, \int_{\R^d} |f(x)| \, \langle x \rangle^{-d-\alpha} \, dx < \infty \right\}.
 $$
In particular, one can define $(-\Delta)^{\alpha/2} m$ when $q<\alpha$.

We recall that the equation 
$\partial_t f = \LL_\alpha f$ admits a unique equilibrium of mass $1$ that we denote~$G_\alpha$ (see \cite{GI} for the case $\alpha<2$). Moreover, if $\alpha<2$, one can prove that $G_\alpha(x) \approx \langle x \rangle^{-d-\alpha}$ (see \cite{FFP-IT}) and for $\alpha=2$, we have an explicit formula $G_2(x) = (2\pi)^{-d/2} e^{-|x|^2/2}$. The main result of this section reads:
\begin{theo} \label{theo-FtoC:FFP-FP}
Assume $\alpha_0 \in (0,2)$ and $q<\alpha_0$. There exists an explicit constant $a_0<0$ such that for any $\alpha \in [\alpha_0,2]$, the semigroup $S_{\LL_\alpha}(t)$ associated to the fractional Fokker-Planck equation~\eqref{eq-FtoC:FFP} satisfies: for any $f \in L^1_q$, any $a>a_0$ and any $\alpha \in [\alpha_0,2]$,
$$
\|S_{\LL_\alpha}(t) f - G_\alpha \langle f \rangle\|_{L^1_q} \le C_a e^{at} \|f - G_\alpha \langle f \rangle\|_{L^1_q}
$$
for some explicit constant $C_a\ge 1$. In particular, the spectrum $\Sigma(\LL_\alpha)$ of $\LL_\alpha$ satisfies the separation property $\Sigma(\LL_\alpha) \cap D_{a_0} = \{ 0\}$ in $L^1_q$ for any $\alpha \in [\alpha_0,2]$. 
\end{theo}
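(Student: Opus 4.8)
The plan is to follow the ``first way'' sketched in the introduction: produce a Hilbert space $E_\alpha\subset L^1(\R^d)$ on which $\LL_\alpha$ has a spectral gap bounded away from $0$ uniformly in $\alpha\in[\alpha_0,2]$, and then propagate this estimate --- together with the accompanying constant --- to the larger space $L^1_q$ by a uniform enlargement argument. The existence and uniqueness of a positive unit-mass steady state $G_\alpha$ is already available (\cite{GI}, and \cite{FFP-IT} for the tail $G_\alpha\approx\langle x\rangle^{-d-\alpha}$ when $\alpha<2$); the semigroup $S_{\LL_\alpha}$ is positive by Kato's inequality, exactly as in Lemma~\ref{lem-DtoC:Kato} since the jump kernel $c_\alpha\,|\cdot|^{-d-\alpha}$ is nonnegative, and it preserves mass because $\LL_\alpha^*1=0$. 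Thus the whole content of the statement is the \emph{uniformity} of the rate, and the one genuinely new ingredient with respect to \cite{FFP-IT} is the elementary observation that, with the normalisation $c_\alpha\approx2-\alpha$ fixed after \eqref{eq-intro:opfrac}, the operator $-(-\Delta)^{\alpha/2}$ is a bounded perturbation of $\Delta$ \emph{uniformly} in $\alpha$, and more precisely $\|\LL_\alpha-\Lambda_0\|_{\BBB(H^{s+k}(m),H^s(m))}\to0$ as $\alpha\to2$ for some fixed $k\ge2$: a Taylor expansion of $f$ inside \eqref{eq-intro:opfrac}, together with $c_\alpha\int_{|z|\le1}z_1^2|z|^{-d-\alpha}\,dz=2$ and the radial symmetry of $\chi$, reproduces $\Delta f$ up to a remainder of vanishing relative size, exactly as for $\Lambda_\eps-\Lambda_0$ in the proof of Lemma~\ref{lem-DtoC:convergence}.

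Next I would introduce the splitting $\LL_\alpha=\AA_\alpha+\BB_\alpha$ with $\AA_\alpha f:=M\chi_R f$ a bounded multiplication operator and $\BB_\alpha:=\LL_\alpha-M\chi_R$, and establish the following facts uniformly in $\alpha\in[\alpha_0,2]$ for suitable constants $M,R$. First, $\BB_\alpha-a$ is hypodissipative in $L^1(m)$, $m=\langle x\rangle^q$, for every $a>-q$: in the Dirichlet-form computation (the analogue of the proof of Lemma~\ref{lem-DtoC:BdissipLp}) the fractional part contributes essentially $\int|f|\,m^{-1}\big(-(-\Delta)^{\alpha/2}m\big)\,m$, an integral that is \emph{finite and decays like} $\langle x\rangle^{-\min(2,\alpha)}$ precisely because $q<\alpha_0\le\alpha$ --- this is where the hypothesis $q<\alpha_0$ enters --- while the confinement term $\hbox{div}(xf)$ supplies the leading $-q\,|f|\,m$ up to the localised $-M\chi_R$ gain; the same computation yields uniform hypodissipativity of $\BB_\alpha$ in the weighted $L^2$- and $H^s$-type spaces needed below. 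Second, $(\AA_\alpha S_{\BB_\alpha})^{(*n)}(t)$ maps $H^{s_1}(m)$ into $H^{s_2}(m)$, and $L^1(m)$ into the Hilbert space $E_\alpha$, with norm $\le C_a\,e^{at}$ uniformly in $\alpha$, provided $n$ is taken large enough (depending on $\alpha_0$): here one uses that the dissipative semigroup $S_{\BB_\alpha}$ smooths by $\alpha/2$ derivatives --- hence by at least $\alpha_0/2$ derivatives uniformly --- per application, so that finitely many convolution iterations produce any prescribed gain, and one integrates in time against the exponential weight exactly as in Lemmas~\ref{lem-DtoC:regularizationL2Hs}--\ref{lem-DtoC:regularizationL1L2}. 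All of these are the estimates of \cite{FFP-IT} with the $\alpha$-dependence now kept explicit, and none of them degenerates on $[\alpha_0,2]$.

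For the uniform spectral gap I would take $E_\alpha:=L^2(G_\alpha^{-1})$, the natural Hilbert space already used for the classical equation, where \cite{FFP-IT,GI} provide at fixed $\alpha$ a dissipativity estimate $(\LL_\alpha f,f)_{E_\alpha}\le\lambda_\alpha^*\,\|f\|_{E_\alpha}^2$ on $\{\langle f\rangle=0\}$ with $\lambda_\alpha^*<0$; the decisive point is that the value of the spectral gap is in fact $\lambda_\alpha=-1$ for \emph{every} $\alpha\in(0,2]$, as can be read off from the eigenrelation $\LL_\alpha(\partial_{x_i}G_\alpha)=-\partial_{x_i}G_\alpha$ (differentiate $\LL_\alpha G_\alpha=0$ and use $\partial_{x_i}\LL_\alpha=\LL_\alpha\partial_{x_i}+\partial_{x_i}$) together with the $W_1$-contraction $W_1(S_{\LL_\alpha}(t)f_0,G_\alpha)\le e^{-t}\,W_1(f_0,G_\alpha)$ of rate $-1$ recalled in the introduction, which forbids any point of $\Sigma(\LL_\alpha)$ with real part in $(-1,0)$. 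Feeding this uniform Hilbert-space gap together with the uniform splitting of the previous paragraph into the ``growth/shrinkage of the functional space'' theorem of \cite{GMM} (see also \cite[Theorem~1.1]{MMcmp}) --- applied, if the weights do not nest directly, through a finite chain of weighted Lebesgue and Sobolev spaces in which $\BB_\alpha$ is uniformly hypodissipative --- transfers the estimate to $L^1_q$ and gives $\Sigma(\LL_\alpha)\cap D_{a_0}=\{0\}$ together with the quantitative $L^1_q$-decay of the statement, with $a_0\in(-1,0)$ and $C_a$ independent of $\alpha$.

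The hard part is the uniformity as $\alpha\to2$. The equilibrium $G_\alpha$ keeps a heavy polynomial tail for every $\alpha<2$ but collapses to the Gaussian $G_2$ at $\alpha=2$, so any quantity written through $G_\alpha^{\pm1}$ behaves badly near $\alpha=2$; the remedy is to perform all of the splitting estimates (hypodissipativity, boundedness, regularisation) in \emph{fixed} polynomial-weight spaces $L^p(\langle x\rangle^\ell)$, $H^s(\langle x\rangle^\ell)$ --- where every constant is controlled on $[\alpha_0,2]$ by the first paragraph --- and to use the uniform boundedness and the convergence $\LL_\alpha\to\Lambda_0$ to glue the regime ``$\alpha$ near $2$'' to the known classical result of \cite{GMM,MM*}, \cite{FFP-IT} covering the regime ``$\alpha$ away from $2$''. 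A secondary, purely technical, nuisance is that $S_{\BB_\alpha}$ gains only $\alpha$ derivatives rather than $2$, so the number $n$ of iterated convolutions in the regularisation step, and hence the length of the chain of spaces, must be chosen large depending on $\alpha_0$ --- which is harmless.
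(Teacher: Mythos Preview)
Your overall strategy --- the splitting $\LL_\alpha=M\chi_R+\BB_\alpha$, uniform dissipativity of $\BB_\alpha$ in $L^1(m)$, uniform regularisation of $(\AA S_{\BB_\alpha})^{(*n)}$, and enlargement from a Hilbert space where the gap is known --- is exactly the paper's, and your identification of the key observation ($c_\alpha\approx 2-\alpha$ keeps all the fractional estimates bounded as $\alpha\to2$) is on the mark.

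Where you diverge is in how much work you do. First, the uniform gap in the small space $L^2(G_\alpha^{-1/2})$ is already the content of Theorem~\ref{theo-FtoC:GI}: \cite{GI} gives a single $a_0<0$ valid for all $\alpha\in(0,2)$, so no separate argument is needed. Your claim that the gap equals $-1$ via the $W_1$-contraction is attractive but not rigorous as stated: decay at rate $-1$ in $(W^{1,\infty})'$ does not by itself locate $\Sigma(\LL_\alpha)$ in $L^2(G_\alpha^{-1/2})$; the paper's own introduction flags this transfer (``shrinkage of the space'') as nontrivial and does not use it. Second, the paper's regularisation is simpler than your derivative-counting: a Nash inequality gives $\|S_{\BB_\alpha}(t)\|_{L^1\to L^2}\le C\,e^{bt}\,t^{-d/(2\alpha_0)}$ uniformly, and then the \emph{compact support} of $\AA=M\chi_R$ carries $L^2$ into $L^2(G_\alpha^{-1/2})$ with a constant depending only on $R$ (since $G_\alpha$ is bounded below on $B(0,2R)$ uniformly in $\alpha$). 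This single estimate handles the whole interval $[\alpha_0,2]$ at once, so your entire final paragraph --- the worry about $G_\alpha^{\pm1}$ degenerating, the gluing via $\LL_\alpha\to\Lambda_0$, the two-regime argument --- is unnecessary. In short: your route works modulo the $W_1$ detour, but the paper's is shorter because it outsources the gap to \cite{GI} and the regularisation to Nash, leaving only the verification that the constants in \cite{FFP-IT} are uniform once $c_\alpha$ is tracked.
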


\subsection{Exponential decay in $L^2(G_\alpha^{-1/2})$}
We recall a result from \cite{GI} which establishes an exponential decay to equilibrium for the semigroup $S_{\LL_\alpha}(t)$ in the small space $L^2(G_\alpha^{-1/2})$. 
\begin{theo} \label{theo-FtoC:GI} There exists a constant $a_0<0$ such that for any $\alpha \in (0,2)$, 

\begin{enumerate}
\item[(1)] in $L^2(G_\alpha^{-1/2})$, there holds $\Sigma(\LL_\alpha) \cap D_{a_0} = \{0\}$;
\item[(2)] the following estimate holds: for any $a>a_0$,
$$
\|S_{\LL_\alpha}(t) f - G_\alpha \langle f \rangle \|_{L^2(G_\alpha^{-1/2})} \leq e^{at} \, \| f - G_\alpha \langle f \rangle \|_{L^2(G_\alpha^{-1/2})}, \quad \forall \, t \ge 0. 
$$
\end{enumerate}
\end{theo}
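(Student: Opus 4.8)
The plan is to realise $\LL_\alpha$ as a quasi-self-adjoint operator on the Hilbert space $L^2(G_\alpha^{-1/2})$ via the ground-state substitution $f = G_\alpha g$ — which turns the weighted norm into $\|f\|_{L^2(G_\alpha^{-1/2})}^2 = \int_{\R^d} g^2\,G_\alpha\,dx = \|g\|_{L^2(\mu_\alpha)}^2$, where $\mu_\alpha := G_\alpha\,dx$ is the equilibrium probability measure, and which amounts to passing to the forward (Kolmogorov) equation of the stable Ornstein--Uhlenbeck process $dX_t = -X_t\,dt + dL^\alpha_t$ having $\mu_\alpha$ as invariant law — and then to prove a fractional Poincaré inequality for $\mu_\alpha$ with a spectral gap $\lambda > 0$ that does \emph{not} deteriorate when $\alpha$ ranges over $(0,2)$. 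The rate will then be $a_0 = -\lambda$.

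The first step is the basic energy identity. Writing $g_t$ for the solution obtained from $\partial_t f = \LL_\alpha f$ after the substitution $f = G_\alpha g$, one uses the pointwise splitting
$$(-\Delta)^{\alpha/2}(G_\alpha g)(x) = G_\alpha(x)(-\Delta)^{\alpha/2}g(x) + g(x)(-\Delta)^{\alpha/2}G_\alpha(x) - c_\alpha \int_{\R^d}\frac{(g(x)-g(y))(G_\alpha(x)-G_\alpha(y))}{|x-y|^{d+\alpha}}\,dy$$
together with the stationarity relation $\LL_\alpha G_\alpha = 0$, that is $(-\Delta)^{\alpha/2}G_\alpha = \mathrm{div}(x\,G_\alpha)$. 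A symmetrisation in $(x,y)$ (carried out first for Schwartz $f$, then extended by density) makes all drift contributions cancel against the cross term, leaving
$$\tfrac12\tfrac{d}{dt}\,\|g_t\|^2_{L^2(\mu_\alpha)} = -\,\mathcal E_\alpha(g_t,g_t), \qquad \mathcal E_\alpha(g,g) := \frac{c_\alpha}{2}\int_{\R^d\times\R^d}\frac{(g(x)-g(y))^2}{|x-y|^{d+\alpha}}\,G_\alpha(x)\,dx\,dy .$$
Since $\LL_\alpha^* 1 = 0$, the mass $\langle f\rangle = \int_{\R^d} g\,d\mu_\alpha$ is conserved, so replacing $g$ by $g - \langle f\rangle$ leaves $\mathcal E_\alpha$ unchanged and $\|g_t - \langle f\rangle\|^2_{L^2(\mu_\alpha)} = \|S_{\LL_\alpha}(t)f - G_\alpha\langle f\rangle\|^2_{L^2(G_\alpha^{-1/2})}$.

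The heart of the matter is then the \emph{uniform} Poincaré inequality: there is $\lambda > 0$ with $\lambda\,\mathrm{Var}_{\mu_\alpha}(g) \le \mathcal E_\alpha(g,g)$ for all admissible $g$ and all $\alpha\in(0,2)$. Here one exploits the sharp two-sided bounds $G_\alpha(x)\approx\langle x\rangle^{-d-\alpha}$ for $\alpha<2$ (and $G_2$ Gaussian), and the normalisation $\frac{c_\alpha}{2}\int_{|z|\le1}z_1^2|z|^{-d-\alpha}\,dz = 1$, which is precisely what forces $\mathcal E_\alpha(g,g)\to\int_{\R^d}|\nabla g|^2\,d\mu_2$ as $\alpha\uparrow 2$ and thus keeps the constant from collapsing at that endpoint; for $\alpha$ away from the endpoints one splits the jump kernel into a near-diagonal part, giving a classical local (ball-by-ball) Poincaré inequality, and a far-field part absorbed by the confinement built into $\mu_\alpha$ (a nonlocal Lyapunov / Bakry--Émery type argument). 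Feeding this into the energy identity yields $\|g_t-\langle f\rangle\|^2_{L^2(\mu_\alpha)}\le e^{-2\lambda t}\|g_0-\langle f\rangle\|^2_{L^2(\mu_\alpha)}$, which is assertion (2) with $a_0 := -\lambda$. Assertion (1) is then soft: $f\mapsto\langle f\rangle$ is continuous on $L^2(G_\alpha^{-1/2})$ by Cauchy--Schwarz (since $\int G_\alpha = 1$), so $N := \{f : \langle f\rangle = 0\}$ is a closed $S_{\LL_\alpha}$-invariant subspace on which (2) gives growth bound $\le a_0$, hence spectral bound of $\LL_\alpha|_N$ at most $a_0$; since $L^2(G_\alpha^{-1/2}) = N \oplus \R G_\alpha$ with $\LL_\alpha G_\alpha = 0$ and (by the Poincaré inequality together with uniqueness of $G_\alpha$) $0$ is a simple eigenvalue, one gets $\Sigma(\LL_\alpha)\cap D_{a_0} = \{0\}$.

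The main obstacle is the uniform-in-$\alpha$ fractional Poincaré inequality: one must control $\mathcal E_\alpha$ with a constant that neither blows up as $\alpha\uparrow 2$ — where $c_\alpha\sim 2-\alpha\to 0$ while the jump kernel concentrates on the diagonal, the whole Dirichlet form degenerating to the Dirichlet energy of the Gaussian — nor as $\alpha\downarrow 0$, where the invariant density spreads out like $\langle x\rangle^{-d}$ and the confinement weakens. Making the near-diagonal/far-field decomposition quantitative and $\alpha$-stable, on the basis of the precise two-sided bounds for $G_\alpha$, is where the real work lies; the rest is bookkeeping and standard semigroup theory.
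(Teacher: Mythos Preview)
The paper does not give its own proof of this statement; it is simply recalled from Gentil--Imbert~\cite{GI}. Your outline is nonetheless in the right spirit and structurally correct: the ground-state substitution $f=G_\alpha g$ and the energy identity you state do hold (the cross term indeed cancels against the drift contribution precisely because $\LL_\alpha G_\alpha=0$, as a short symmetrisation computation confirms), and assertion~(2) then reduces to a Poincar\'e inequality for $\mu_\alpha$ with respect to the nonlocal Dirichlet form $\mathcal E_\alpha$, from which~(1) follows by the soft argument you give.

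The point of departure from~\cite{GI} is the proof of the Poincar\'e inequality itself. Your proposed route---a near-diagonal/far-field splitting of the jump kernel combined with a Lyapunov/confinement argument, backed by the two-sided bounds $G_\alpha\approx\langle x\rangle^{-d-\alpha}$---can in principle be carried out, but keeping the constant uniform as $\alpha\uparrow 2$ and $\alpha\downarrow 0$ is genuinely delicate, and you leave this step at the level of a programme rather than an argument. Gentil--Imbert instead use a $\Phi$-entropy/Bakry--\'Emery method that exploits the explicit structure of the Ornstein--Uhlenbeck--L\'evy semigroup: the curvature lower bound comes entirely from the linear drift $-x$ and is \emph{independent} of the L\'evy jump part, so the spectral gap equals the drift coefficient and the uniformity in $\alpha\in(0,2)$ is automatic, with no need for pointwise estimates on $G_\alpha$. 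Your plan is therefore sound but heavier than necessary; for the uniform constant the semigroup/curvature route is both shorter and sharper.
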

%

\subsection{Splitting of $\LL_\alpha$ and uniform estimates. }Â 
The proof is based on the splitting of the operator $\LL_\alpha$ as $\LL_\alpha = \AA  + \BB_\alpha$ where $\AA$ is the multiplier operator  $\AA  f:= M \, \chi_R f $, for some $M,R>0$ to be chosen later,  and an extension argument taking advantage of the already known exponential decay in $L^2(G_\alpha^{-1/2})$. 

\smallskip
As a straightforward consequence of the definition of $\AA$, we get the following estimates. 

\begin{lem} \label{lem-FtoC:Abounded}
Consider $s \in \N$  and $p \ge 1$. The operator is uniformly bounded in $\alpha$ from $W^{s,p}(\nu)$ to $W^{s,p}$ with $\nu = m$ or $\nu = G_\alpha^{-1/2}$. 
\end{lem}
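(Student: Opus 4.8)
The plan is to unwind the definition of the multiplier operator $\AA f = M\,\chi_R f$ and observe that, since $\chi_R$ is compactly supported (hence bounded together with all its derivatives, with bounds depending only on $R$ and on $\chi$), multiplication by $M\,\chi_R$ is a bounded operator on every weighted Sobolev space, and moreover it gains any polynomial weight "for free" because $\chi_R$ kills the region where the weight is large. First I would treat the case $s=0$: for $f \in W^{0,p}(\nu) = L^p(\nu)$ with $\nu = m$ or $\nu = G_\alpha^{-1/2}$, write
$$
\|\AA f\|_{L^p} = \|M\,\chi_R f\|_{L^p} = M\,\Big(\int_{B(0,2R)} |\chi_R|^p\,|f|^p\,dx\Big)^{1/p} \le M\,\|\chi\|_\infty\,\Big(\sup_{B(0,2R)} \nu^{-p}\Big)^{1/p}\,\|f\|_{L^p(\nu)},
$$
and note that $\nu \ge 1$ in the case $\nu = m$ (since $q>0$), while in the case $\nu = G_\alpha^{-1/2}$ one uses that $G_\alpha$ is bounded below on the compact set $B(0,2R)$ by a constant uniform in $\alpha \in [\alpha_0,2]$ — this is the only place where uniformity in $\alpha$ must be checked, and it follows from the lower bound $G_\alpha(x) \gtrsim \langle x\rangle^{-d-\alpha} \gtrsim \langle x\rangle^{-d-2}$ recalled just before the statement (together with $G_2$ being the explicit Gaussian). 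Hence the constant is independent of $\alpha$.

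Next I would handle $s \ge 1$ by differentiating: for a multi-index $|\beta| \le s$, the Leibniz rule gives $\partial^\beta(\AA f) = M\sum_{\gamma \le \beta} \binom{\beta}{\gamma} (\partial^{\beta-\gamma}\chi_R)\,(\partial^\gamma f)$, and each factor $\partial^{\beta-\gamma}\chi_R = R^{-|\beta-\gamma|}(\partial^{\beta-\gamma}\chi)(\cdot/R)$ is again supported in $B(0,2R)$ and bounded by a constant $C_R$. Applying the $s=0$ estimate termwise to each $\partial^\gamma f$ (with $|\gamma| \le s$) and summing yields $\|\AA f\|_{W^{s,p}} \le C\,\|f\|_{W^{s,p}(\nu)}$ with $C = C(M,R,\chi,s,\alpha_0)$ but independent of $\alpha$. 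This is really all there is to it; the lemma is "straightforward" as the text says.

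The main (and only mild) obstacle is the uniformity in $\alpha$ in the weighted case $\nu = G_\alpha^{-1/2}$: one must be sure that $\inf_{x \in B(0,2R)} G_\alpha(x)$ does not degenerate as $\alpha$ ranges over $[\alpha_0,2]$. This is guaranteed by the pointwise lower bound $G_\alpha(x) \approx \langle x\rangle^{-d-\alpha}$ for $\alpha < 2$ (from \cite{FFP-IT}), which on the compact ball $B(0,2R)$ gives $G_\alpha \ge c(R) > 0$ with $c(R)$ depending only on $R$ and $\alpha_0$, and by continuity/explicitness at $\alpha = 2$. Since no derivatives of $G_\alpha$ enter (the weight appears only on the domain side, via $f \in W^{s,p}(\nu)$, never differentiated), there is nothing further to control, and the proof is complete. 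In the case $\nu = m$ the bound is even simpler since $m \ge 1$ everywhere, so $\sup_{B(0,2R)} m^{-p} \le 1$.
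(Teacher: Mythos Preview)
Your approach is correct and matches the paper's, which in fact gives no proof at all and simply declares the lemma a ``straightforward consequence of the definition of $\AA$''. The Leibniz-rule argument for $s\ge 1$ is exactly right.

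There is one slip in the case $\nu = G_\alpha^{-1/2}$. In your displayed inequality the constant that must be controlled is $\bigl(\sup_{B(0,2R)} \nu^{-p}\bigr)^{1/p}$, and for $\nu = G_\alpha^{-1/2}$ this equals $\bigl(\sup_{B(0,2R)} G_\alpha^{p/2}\bigr)^{1/p}$. So what is actually needed is a uniform \emph{upper} bound on $G_\alpha$ over $B(0,2R)$, not the lower bound you invoke. The fix is immediate: the relation $G_\alpha(x)\approx\langle x\rangle^{-d-\alpha}$ recalled in the paper gives both directions, and the upper half yields $\sup_{B(0,2R)} G_\alpha \le C$ with $C$ independent of $\alpha\in[\alpha_0,2)$ (and the case $\alpha=2$ is the explicit Gaussian). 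Incidentally, the lower bound you cite is precisely what would be needed had the lemma been stated in the opposite direction, $\AA:W^{s,p}\to W^{s,p}(\nu)$, which is how the analogous Lemma~\ref{lem-DtoC:Abounded} is phrased and how $\AA$ is actually used in the proof of Lemma~\ref{lem-FtoC:regularization}; so your instinct was not wrong, just applied to the inequality you wrote rather than the one you perhaps had in mind.
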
  

We next establish that $\BB_\alpha$ enjoys uniform dissipativity properties.
\begin{lem} \label{lem-FtoC:Bdissip}
For any $a>-q$, there exist $M >0$ and $R>0$ such that for any $\alpha \in [\alpha_0,2]$, $\BB_\alpha -a$ is dissipative in $L^1(m)$. 
\end{lem}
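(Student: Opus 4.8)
The plan is to estimate $\langle (\BB_\alpha - a) f, \Phi'(f) \rangle_{L^1(m)}$ with $\Phi(f) = |f|$, i.e. to show
$$
\int_{\R^d} (\BB_\alpha f)\,(\hbox{sign}\, f)\, m \le a \int_{\R^d} |f|\, m
$$
for a suitable choice of $M$ and $R$ independent of $\alpha \in [\alpha_0,2]$. Recall $\BB_\alpha f = -(-\Delta)^{\alpha/2} f + \hbox{div}(xf) - M\,\chi_R\, f$. First I would split the contribution into the fractional-diffusion part, the transport part and the multiplicative part, and treat each separately, exactly as in the discrete case (Lemma~\ref{lem-DtoC:BdissipLp}).

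For the fractional Laplacian term, I would use the pointwise Kato-type inequality $(\hbox{sign}\, f)\,(-(-\Delta)^{\alpha/2} f)(x) \le -(-\Delta)^{\alpha/2}|f|(x)$, which follows from the integral representation \eqref{eq-intro:opfrac} and convexity, just as in Lemma~\ref{lem-DtoC:Kato}. Then by the (formal) self-adjointness of $(-\Delta)^{\alpha/2}$,
$$
\int_{\R^d} \bigl(-(-\Delta)^{\alpha/2} f\bigr)\,(\hbox{sign}\, f)\, m \le -\int_{\R^d} \bigl((-\Delta)^{\alpha/2}|f|\bigr)\, m = -\int_{\R^d} |f|\,\bigl((-\Delta)^{\alpha/2} m\bigr).
$$
Since $q < \alpha_0 \le \alpha$, the quantity $(-\Delta)^{\alpha/2} m$ is well-defined (as noted in the text after \eqref{eq-intro:opfrac}), and one has the bound $|(-\Delta)^{\alpha/2} m(x)| \le C\, m(x)\,\langle x\rangle^{-\alpha} \le C\,\langle x \rangle^{q-\alpha_0}$ for a constant $C$ uniform in $\alpha \in [\alpha_0,2]$ — here I would invoke the scaling $c_\alpha \approx (2-\alpha)$ to control the behaviour near $\alpha = 2$, and a direct estimate on $\int (m(x+z)-m(x)-\ldots)|z|^{-d-\alpha}dz$ for $\alpha$ bounded away from $0$. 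The transport term is handled by integration by parts exactly as in \eqref{eq-DtoC:T3}: it contributes $\int |f|\, m\,(d - d - q|x|^2/\langle x\rangle^2) = -\int |f|\, m \, q|x|^2/\langle x \rangle^2$ (with $p=1$), and the multiplicative term contributes $-M\int \chi_R |f|\, m$.

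Collecting everything, one gets
$$
\int_{\R^d}(\BB_\alpha f)\,(\hbox{sign}\, f)\, m \le \int_{\R^d} |f|\, m \,\Bigl( C\,\langle x \rangle^{-\alpha_0} - q\,\frac{|x|^2}{\langle x\rangle^2} - M\,\chi_R \Bigr) =: \int_{\R^d} |f|\, m\,\bigl(\psi_R - M\chi_R\bigr),
$$
and since $\psi_R(x) \to -q$ as $|x|\to\infty$, for any $a > -q$ one may first fix $R$ large so that $\psi_R \le a$ outside $B(0,R)$, then fix $M$ large so that $\psi_R - M\chi_R \le a$ everywhere; both choices are uniform in $\alpha$. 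This yields the dissipativity claim.

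I expect the main obstacle to be making the Kato inequality and the integration-by-parts rigorous for the fractional operator on the weight $m$ — in particular, justifying $\int (-(-\Delta)^{\alpha/2}|f|)\, m = \int |f|\,(-\Delta)^{\alpha/2} m$ when $m$ grows and $f$ is merely in $L^1(m)\cap \DD(\R^d)$, and obtaining the uniform-in-$\alpha$ bound on $(-\Delta)^{\alpha/2} m$ with the correct $\langle x \rangle^{q - \alpha}$ decay (splitting the defining integral into $|z| \le 1$ and $|z| > 1$, using $c_\alpha \approx 2-\alpha$ on the singular part and $q < \alpha$ on the tail). Once these uniform bounds are in place, the choice of constants is routine and identical in spirit to Lemma~\ref{lem-DtoC:BdissipLp}; I would also refer to \cite{FFP-IT} for the analogous computation at fixed $\alpha$.
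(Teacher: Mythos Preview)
Your proposal is correct and follows essentially the same route as the paper's own proof: both pass from $f$ to $|f|$ via the Kato inequality for the fractional Laplacian, move the operator onto the weight to obtain $\int |f|\, m\,\bigl(I_\alpha(m)/m - x\cdot\nabla m/m - M\chi_R\bigr)$, and use that $I_\alpha(m)/m \to 0$ at infinity uniformly in $\alpha \in [\alpha_0,2)$ (thanks to $c_\alpha \approx 2-\alpha$) to choose $M$ and $R$ independently of $\alpha$. The paper is terse and defers the details to \cite{FFP-IT}; your write-up simply makes explicit what that reference contains, including the decomposition of the integral defining $(-\Delta)^{\alpha/2}m$ and the resulting decay estimate.
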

\begin{proof}
We just have to adapt the proof of Lemma 5.1 from \cite{FFP-IT} taking into account the constant $c_\alpha$. Indeed, we have 
$$
\int_{\R^d} \left(\LL_\alpha f \right) \hbox{sign} f \, m \leq \int_{\R^d} |f| \, m \left( {I_\alpha(m) \over m} - {{x \cdot \nabla m} \over m} \right).
$$ 
We can then show that thanks to the rescaling constant $c_\alpha$, $I_\alpha(m)/m$ goes to $0$ at infinity uniformly in $\alpha \in [\alpha_0,2)$. As a consequence, if $a>-q$, since $(x \cdot \nabla m)/m$ goes to $-q$ at infinity, one may choose $M$ and $R$ such that for any $\alpha \in [\alpha_0,2)$, 
$$
{I_\alpha(m) \over m} - {{x \cdot \nabla m} \over m} - M \, \chi_R \leq a, \quad \text{on} \, \,\R^d,
$$
which gives the result.
\end{proof}

\begin{lem} \label{lem-FtoC:Bdissipbis}
For any $a>a_0$ where $a_0$ is defined in Theorem \ref{theo-FtoC:GI}, $\BB_\alpha -a$ is dissipative in~$L^2(G_\alpha^{-1/2})$. 
\end{lem}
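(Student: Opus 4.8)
The splitting here is $\LL_\alpha=\AA+\BB_\alpha$ with $\AA f=M\chi_R f$, so that $\BB_\alpha=\LL_\alpha-M\chi_R$, where the multiplier $M\chi_R$ is self-adjoint and nonnegative in the Hilbert space $H:=L^2(G_\alpha^{-1/2})$ (with $\langle f,g\rangle_H=\int_{\R^d} fg\,G_\alpha^{-1}$, so $\langle f\rangle=\langle f,G_\alpha\rangle_H$ and $\|G_\alpha\|_H=1$ since $\int G_\alpha=1$). Since $\LL_\alpha$ already has a spectral gap in $H$ by Theorem~\ref{theo-FtoC:GI}, the whole point is to use the negative perturbation $-M\chi_R$ to push the top eigenvalue $0$ of $\LL_\alpha$ down below the prescribed level $a$, uniformly in $\alpha$, while the rest of the spectrum already sits below $a_0$. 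The first step is to turn Theorem~\ref{theo-FtoC:GI}(2) into the infinitesimal inequality
$$\langle \LL_\alpha f, f\rangle_H \le a_0\,\|f\|_H^2, \qquad \forall\, f\in D(\LL_\alpha)\ \text{with}\ \langle f\rangle=0,$$
uniformly in $\alpha\in[\alpha_0,2]$. Indeed, $\{\langle f\rangle=0\}$ is a closed $S_{\LL_\alpha}$-invariant subspace of $H$ (the orthogonal complement of $G_\alpha$), on which Theorem~\ref{theo-FtoC:GI}(2) reads $\|S_{\LL_\alpha}(t)f\|_H\le e^{at}\|f\|_H$ for every $a>a_0$; differentiating $\|S_{\LL_\alpha}(t)f\|_H^2\le e^{2at}\|f\|_H^2$ at $t=0^+$ gives the claim — here one uses precisely that the multiplicative constant in Theorem~\ref{theo-FtoC:GI}(2) equals $1$. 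For a general $f$, writing $f=f_0+\langle f\rangle G_\alpha$ with $\langle f_0\rangle=0$ and using $\LL_\alpha G_\alpha=0$, $\langle\LL_\alpha f_0\rangle=0$, $\|G_\alpha\|_H=1$ yields $\|f_0\|_H^2=\|f\|_H^2-\langle f\rangle^2$ and $\langle\LL_\alpha f,f\rangle_H=\langle\LL_\alpha f_0,f_0\rangle_H\le a_0\|f_0\|_H^2=a_0\|f\|_H^2+|a_0|\,\langle f\rangle^2$.

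The second step absorbs the defect $|a_0|\langle f\rangle^2$. Splitting $\langle f\rangle=\int\chi_R f+\int\chi_R^c f$ and using Cauchy--Schwarz with the weights $G_\alpha^{1/2}$ and $G_\alpha^{-1/2}$ gives $|\int\chi_R f|\le\langle\chi_R f,f\rangle_H^{1/2}$ (since $\chi_R\le1$ and $\int G_\alpha=1$) and $|\int\chi_R^c f|\le\eta_R(\alpha)\,\|f\|_H$ with $\eta_R(\alpha):=\big(\int_{\R^d}\chi_R^c G_\alpha\big)^{1/2}$, hence $\langle f\rangle^2\le 2\langle\chi_R f,f\rangle_H+2\,\eta_R(\alpha)^2\,\|f\|_H^2$. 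The uniform tightness of the equilibria — which follows from $G_\alpha(x)\approx\langle x\rangle^{-d-\alpha}$ uniformly for $\alpha\in[\alpha_0,2]$ (\cite{FFP-IT}), the Gaussian case $\alpha=2$ being even better — yields $\bar\eta_R:=\sup_{\alpha\in[\alpha_0,2]}\eta_R(\alpha)\to0$ as $R\to\infty$. Combining with the first step and writing $\langle\chi_R f,f\rangle_H=M^{-1}\langle M\chi_R f,f\rangle_H$,
$$\langle\BB_\alpha f,f\rangle_H=\langle\LL_\alpha f,f\rangle_H-\langle M\chi_R f,f\rangle_H\le\big(a_0+2|a_0|\,\bar\eta_R^{\,2}\big)\,\|f\|_H^2+\big(2|a_0|-M\big)\,\langle\chi_R f,f\rangle_H.$$
Given $a>a_0$, I would first fix $M:=2|a_0|$, which makes the last term $\le0$, and then choose $R$ large enough that $2|a_0|\,\bar\eta_R^{\,2}\le a-a_0$; this yields $\langle\BB_\alpha f,f\rangle_H\le a\,\|f\|_H^2$ for every $\alpha\in[\alpha_0,2]$, i.e. the uniform dissipativity of $\BB_\alpha-a$ in $L^2(G_\alpha^{-1/2})$.

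There is no genuine analytic difficulty in this lemma; the only points that require care are the uniformity statements — that the rate $a_0$ of Theorem~\ref{theo-FtoC:GI} can be chosen independent of $\alpha$ and that $(G_\alpha)_{\alpha\in[\alpha_0,2]}$ is uniformly tight, both available from \cite{GI,FFP-IT} — together with the elementary but essential observation that the estimate in Theorem~\ref{theo-FtoC:GI}(2) has multiplicative constant $1$, which is what legitimizes the passage to the infinitesimal inequality in the first step.
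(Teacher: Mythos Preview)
Your argument is correct and self-contained, whereas the paper simply cites \cite[Lemma~5.1]{FFP-IT} without giving details. Judging from the proof of the companion Lemma~\ref{lem-FtoC:Bdissip}, the approach in \cite{FFP-IT} is a direct pointwise/multiplier computation on $\LL_\alpha$ in the weighted space, showing that a suitable symbol is bounded above by $a$ once $M,R$ are large. Your route is different and rather cleaner in this Hilbert setting: you take the spectral gap of Theorem~\ref{theo-FtoC:GI} as a black box, turn it into the infinitesimal inequality $\langle\LL_\alpha f,f\rangle_H\le a_0\|f\|_H^2$ on $\{G_\alpha\}^\perp$ (using crucially that the constant in Theorem~\ref{theo-FtoC:GI}(2) is $1$), and then absorb the one-dimensional defect $|a_0|\langle f\rangle^2$ with the nonnegative perturbation $M\chi_R$. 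This buys you a short proof that makes transparent why only $M$ and $R$ large are needed, at the price of invoking the uniform tightness of $(G_\alpha)_{\alpha\in[\alpha_0,2]}$; the direct computation in \cite{FFP-IT} avoids tightness but is heavier.

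Two minor remarks. First, you fix $M=2|a_0|$; for compatibility with the choice of $M,R$ made in Lemma~\ref{lem-FtoC:Bdissip} it is better to note that your estimate works verbatim for any $M\ge 2|a_0|$ (the last term becomes $\le 0$), so one may simply enlarge $M$. Second, your appeal to ``$G_\alpha(x)\approx\langle x\rangle^{-d-\alpha}$ uniformly for $\alpha\in[\alpha_0,2]$'' is a slight overstatement, since at $\alpha=2$ the equilibrium is Gaussian; what you actually need (and what follows from \cite{FFP-IT,GI}) is only a uniform upper bound of the type $G_\alpha(x)\le C\langle x\rangle^{-d-\alpha_0}$ on $[\alpha_0,2]$, which indeed yields $\bar\eta_R\to0$.
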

\begin{proof}
The proof also comes from \cite[Lemma~5.1]{FFP-IT}. 
\end{proof}

We finally establish that $\AA S_{\BB_\alpha}$ enjoys some uniform regularization properties. 
\begin{lem} \label{lem-FtoC:regularization}
There exist some constants $b \in \R$ and $C>0$ such that for any $\alpha \in [\alpha_0,2]$, the following estimates hold:
$$
\forall \, t \geq 0, \quad \|S_{\BB_\alpha}(t)\|_{\BBB(L^1,L^2)} \leq C \, \frac{e^{bt}}{t^{d/2 \alpha_0}}.
$$
As a consequence, we can prove that for any $a>\max(-q,a_0)$, $\alpha \in[\alpha_0,2]$, 
\beqn \label{eq-FtoC:regbis}
\forall \, t \geq 0, \quad \|(\AA \, S_{\BB_\alpha})^{(*n)}(t)\|_{\BBB(L^1(m), L^2(G_\alpha^{-1/2}))} \leq C \, e^{at}.
\eeqn

\end{lem}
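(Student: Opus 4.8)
The plan is to prove Lemma~\ref{lem-FtoC:regularization} in two stages, mirroring the argument for the classical Fokker--Planck equation but tracking the dependence on $\alpha$ through the rescaled constant $c_\alpha \approx 2-\alpha$. First I would establish the ultracontractivity-type bound $\|S_{\BB_\alpha}(t)\|_{\BBB(L^1,L^2)} \le C\,e^{bt}\,t^{-d/(2\alpha_0)}$. The operator $\BB_\alpha f = -(-\Delta)^{\alpha/2}f + \mathrm{div}(xf) - M\chi_R f$ is a fractional-diffusion-plus-drift operator, so the free fractional heat semigroup $e^{-t(-\Delta)^{\alpha/2}}$ has kernel $p_t^\alpha$ satisfying $\|e^{-t(-\Delta)^{\alpha/2}}\|_{L^1\to L^\infty}\le C_\alpha\,t^{-d/\alpha}$, hence $\|\cdot\|_{L^1\to L^2}\le C_\alpha\,t^{-d/2\alpha}$. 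The drift term $\mathrm{div}(xf)$ only contributes a bounded exponential factor $e^{dt}$ via the change of variables $f\mapsto f(e^tx)e^{dt}$ used in Lemma~\ref{lem-DtoC:strongPM}, and the $-M\chi_R f$ term is a bounded perturbation. The one subtlety is uniformity in $\alpha$: as $\alpha\to 2$ the constant $c_\alpha\to 0$ but the rescaled operator converges to $\Delta$, so the correct statement is that the regularization is uniform for $\alpha$ bounded away from $0$; since $\alpha\in[\alpha_0,2]$ with $\alpha_0>0$, I get the exponent $d/(2\alpha_0)$ uniformly, using $t^{-d/2\alpha}\le \max(t^{-d/2\alpha_0},t^{-d/2\cdot 1/2})$ for $t$ in bounded intervals and the exponential decay at infinity coming from Lemma~\ref{lem-FtoC:Bdissip} (dissipativity of $\BB_\alpha-a$ in $L^1(m)$, which also upgrades $L^1\to L^1(m)$ on the incoming side after composing with one more factor).

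Second, I would bootstrap from $L^1\to L^2$ to the weighted estimate $\|(\AA S_{\BB_\alpha})^{(*n)}(t)\|_{\BBB(L^1(m),L^2(G_\alpha^{-1/2}))}\le C\,e^{at}$. The idea is the standard iterated-convolution argument of \cite{GMM}: write
$$
(\AA S_{\BB_\alpha})^{(*n)} = \AA S_{\BB_\alpha}\ast (\AA S_{\BB_\alpha})^{(*(n-1))},
$$
and use that (a) $\AA = M\chi_R\cdot$ is compactly supported, so it maps $L^2$ into $L^2(G_\alpha^{-1/2})$ and into $L^2(m)$ with norm uniform in $\alpha$ (Lemma~\ref{lem-FtoC:Abounded}, since on the compact set $\{|x|\le 2R\}$ the weights $m$ and $G_\alpha^{-1/2}$ are bounded above and below uniformly in $\alpha\in[\alpha_0,2]$); (b) $S_{\BB_\alpha}$ decays like $e^{at}$ in $L^1(m)$ (Lemma~\ref{lem-FtoC:Bdissip}) and in $L^2(G_\alpha^{-1/2})$ (Lemma~\ref{lem-FtoC:Bdissipbis}); and (c) the single factor $S_{\BB_\alpha}(t)$ provides the $L^1\to L^2$ gain with the integrable singularity $t^{-d/(2\alpha_0)}$, which is integrable precisely because $d/(2\alpha_0)$ can be made $<1$... — wait, that is false in general, so instead one takes $n$ large enough that the $n$-fold time-convolution of $e^{bt}t^{-d/(2\alpha_0)}$ with bounded exponentials is itself a bounded function times $e^{a't}$; this is exactly the mechanism of \cite[Lemma~2.17]{GMM}, which I would cite. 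Concretely: one factor kills the weight singularity (maps $L^1(m)$ to $L^1$, then $L^1$ to $L^2$), one more factor with $\AA$ compactly supported maps $L^2$ to $L^2(G_\alpha^{-1/2})$, and the remaining factors of $S_{\BB_\alpha}$ are used only to absorb the time-singularities and produce the clean rate $a>\max(-q,a_0)$. Choosing $a>\max(-q,a_0)$ lets me pick $M,R$ so that both Lemma~\ref{lem-FtoC:Bdissip} and Lemma~\ref{lem-FtoC:Bdissipbis} hold with the same $M,R$, which is what makes the splitting compatible in both spaces.

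The main obstacle is the uniformity in $\alpha$ of the $L^1\to L^2$ smoothing near $\alpha=2$, together with the fact that the weight $G_\alpha$ degenerates differently for $\alpha<2$ (polynomial tail $\langle x\rangle^{-d-\alpha}$) and $\alpha=2$ (Gaussian). The remedy is that $G_\alpha^{-1/2}$ is only ever used against $\AA$, which is supported in $\{|x|\le 2R\}$, so the global decay profile of $G_\alpha$ is irrelevant: only the local bound $G_\alpha^{-1/2}\le C(R)$ on that compact set matters, and that is uniform in $\alpha\in[\alpha_0,2]$ since $G_\alpha$ is continuous and bounded below there (using $G_\alpha(x)\approx\langle x\rangle^{-d-\alpha}$ for $\alpha<2$ and the explicit Gaussian for $\alpha=2$). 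Thus the weighted target estimate reduces to the unweighted $L^1\to L^2$ estimate plus a compactly-supported multiplication, and the whole argument goes through uniformly. I would note at the end that the case $\alpha=2$ (classical) is already contained in \cite{GMM,MM*}, so it suffices to treat $\alpha\in[\alpha_0,2)$ and check that all constants stay bounded as $\alpha\uparrow 2$, which follows from $c_\alpha\approx 2-\alpha$ and the corresponding normalization of the rescaled fractional operator.
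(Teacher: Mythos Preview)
Your Step 2 (bootstrap from $L^1\to L^2$ to $L^1(m)\to L^2(G_\alpha^{-1/2})$ via the compact support of $\AA$ and iterated time-convolutions, concluding with \cite[Lemma~2.17]{GMM}) is exactly the paper's argument.

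For Step 1 the paper takes a different route: it invokes the fractional Nash inequality, following \cite[Lemma~5.3]{FFP-IT}, to obtain directly for the full operator $\BB_\alpha$ the differential inequality
\[
\frac{d}{dt}\|f_t\|_{L^2}^2 \le -c\,\|f_t\|_{\dot H^{\alpha/2}}^2 + C\,\|f_t\|_{L^2}^2
\le -c'\,\|f_t\|_{L^2}^{2+2\alpha/d}\,\|f_t\|_{L^1}^{-2\alpha/d} + C\,\|f_t\|_{L^2}^2,
\]
which integrates to the stated bound with constants uniform in $\alpha\in[\alpha_0,2)$. There is no decomposition into free semigroup plus drift plus perturbation.

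Your heat-kernel route is viable, and the self-similar substitution $f(t,x)=e^{dt}g(\tau,e^tx)$ with $\tau=(e^{\alpha t}-1)/\alpha$ does reduce the drift part to the free fractional heat equation, yielding $\|S_{\Lambda_\alpha}(t)\|_{L^1\to L^2}\le C\,e^{bt}\,t^{-d/(2\alpha)}$ uniformly. But the sentence ``the $-M\chi_R f$ term is a bounded perturbation'' hides a real issue: writing Duhamel
\[
S_{\BB_\alpha}(t)=S_{\Lambda_\alpha}(t)-\int_0^t S_{\Lambda_\alpha}(t-s)\,M\chi_R\,S_{\BB_\alpha}(s)\,ds
\]
and estimating $L^1\to L^2$ forces the integral $\int_0^t (t-s)^{-d/(2\alpha)}\,ds$, which diverges whenever $d\ge 2\alpha$ (so in all dimensions $d\ge 4$, and for a range of $\alpha$ in $d=2,3$). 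The clean fix along your line is not Duhamel but semigroup domination: since $-M\chi_R\le 0$ is a killing potential and $S_{\Lambda_\alpha}$ is positive, one has $0\le S_{\BB_\alpha}(t)f\le S_{\Lambda_\alpha}(t)f$ for $f\ge 0$, so the $L^1\to L^2$ bound transfers immediately. The Nash route avoids the issue altogether because the $-M\chi_R$ term enters the energy identity with the good sign.
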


\begin{proof} We do not write the proof for the case $\alpha =2$, for which we refer to~\cite{GMM,MM*}.

\noindent \textit{Step 1.} The key argument to prove this regularization property of $S_{\BB_\alpha}(t)$ is the Nash inequality.
For $\alpha \in [\alpha_0,2)$, from the proof of \cite[Lemma~5.3]{FFP-IT}, we obtain that there exist $b \geq0$ and $C>0$ such that for any $\alpha \in [\alpha_0,2)$,
$$
\forall \, t \geq 0, \quad \|S_{\BB_\alpha}(t) f \|_{L^2} \leq C \, \frac{e^{bt}}{t^{d/(2\alpha_0)}} \, \|f\|_{L^1}.
$$

\smallskip
\noindent \textit{Step 2.} Using that $\AA$ is compactly supported, we can write 
$$
\|\AA S_{\BB_\alpha}(t) f \|_{L^2(m)} \leq C \, \| S_{\BB_\alpha}(t) f \|_{L^2} \leq C \, \frac{e^{bt}}{t^{d/(2\alpha_0)}} \,  \|f\|_{L^1}.
$$
Using the same method as in \cite{GMM}, we can first deduce that there exists $\ell_0 \in \N$, $\gamma \in [0,1)$ and $K \in \R$ such that for any $\alpha \in [\alpha_0,2]$,
$$
\|(\AA S_{\BB_\alpha})^{(* \ell_0)}(t) f \|_{L^2(G_\alpha^{-1/2})} \leq C \, \frac{e^{bt}}{t^\gamma} \,  \|f\|_{L^1(m)}.
$$ 
We  next conclude that \eqref{eq-FtoC:regbis} holds using \cite[Lemma~2.17]{GMM} together with Lemmas \ref{lem-FtoC:Bdissip} and~ \ref{lem-FtoC:Abounded}. 
\end{proof}

\smallskip
\subsection{Spectral analysis} Before going into the proof of Theorem \ref{theo-FtoC:FFP-FP}, let us notice that we can make explicit the projection $\Pi_\alpha$ onto the null space $\NN(\LL_\alpha)$ through the following formula: $\Pi_\alpha f= \langle f \rangle \, G_\alpha$. Moreover, since the mass is preserved by the equation $\partial_t f = \LL_\alpha f$, we can deduce that $\Pi_\alpha (S_{\LL_\alpha}(t) f) = \Pi_\alpha f$ for any $t \geq 0$.
 
\begin{proof}[Proof of Theorem \ref{theo-FtoC:FFP-FP}]
We apply  \cite[Theorem~2.13]{GMM} for each $\alpha \in [\alpha_0,2]$ because combining Theorem \ref{theo-FtoC:GI} with Lemmas \ref{lem-FtoC:Abounded}, \ref{lem-FtoC:Bdissip}, \ref{lem-FtoC:Bdissipbis} and \ref{lem-FtoC:regularization}, we can check the assumptions of the theorem are satisfied. 
\end{proof}
 
\bigskip

\section{From discrete to fractional Fokker-Planck equation} \label{sec:DFFP-FFP}
Let us fix $\alpha \in (0,2)$. We consider the equations 
\beqn \label{eq-DtoF:FPFD}
\left\{\begin{aligned}
&\partial_t f = k_\eps * f  - \|k_\eps\|_{L^1}  f + \hbox{div}_x (xf) =: \Lambda_\eps f, \quad \eps>0, \\
&\partial_t f = -(-\Delta )^{\alpha/2} f + \hbox{div}_x (xf) =: \Lambda_0 f,
\end{aligned}
\right.
\eeqn
where 
$$
k_\eps(x) := \mathds{1}_{\eps \le |x| \le 1/\eps} \, k_0(x) + \mathds{1}_{|x|<\eps} \, k_0(\eps), \quad k_0(x) := |x|^{-d-\alpha}.
$$
Notice that 
\beqn \label{prop-DtoF:keps2}
\forall \, x \in \R^d \setminus \{0\}, \quad k_\eps(x) \nearrow k_0(x) \quad \text{as} \quad \eps \to 0.
\eeqn

We here recall that for $\alpha \in (0,2)$, the fractional Laplacian on Schwartz functions is defined through the formula \eqref{eq-intro:opfrac}. Since $\alpha$ is fixed in this part, we can get rid of the constant $c_\alpha$ and consider that it equals $1$. 
The main theorem of this section reads: 
\begin{theo} \label{theo-DtoF:DtoF}
Assume  $0<r<\alpha/2$.

(1) For any $\eps > 0$, there exists a positive and unit mass normalized steady state $G_\eps \in L^1_r(\R^d)$ to the discrete fractional Fokker-Planck equation 
\eqref{eq-DtoF:FPFD}. 

(2) There exist an explicit constant $a_0<0$ and a constant $\eps_0>0$ such that for any $\eps \in [0,\eps_0]$, the semigroup $S_{\Lambda_\eps}(t)$ associated to
the discrete and fractional Fokker-Planck equations  \eqref{eq-DtoF:FPFD} satisfies: for any $f \in L^1_r$ and any $a>a_0$, 
$$
\| S_{\Lambda_\eps}(t) f - G_\eps  \langle f \rangle  \|_{L^1_r} \leq C_a \, e^{at} \, \| f- G_\eps \langle f \rangle\|_{L^1_r} \quad \forall \, t \ge0,
$$
for some explicit constant $C_a\ge1$. In particular, the spectrum $\Sigma (\Lambda_\eps)$ of $\Lambda_\eps$ satisfies the separation property  $\Sigma (\Lambda_\eps) \cap  D_{a_0} = \{0\}$ in $L^1_r$. 
\end{theo}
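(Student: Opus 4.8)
The plan is to mimic the architecture of Section~\ref{sec:DFP-FP}, but now with the \emph{fractional} Fokker-Planck operator $\Lambda_0 = -(-\Delta)^{\alpha/2} + \mathrm{div}(x\,\cdot)$ playing the role of the limit operator, and to exploit the improved perturbation lemma announced in the introduction (the one where dissipativity assumptions are required only at $\eps=0$). First I would introduce a splitting $\Lambda_\eps = \AA_\eps + \BB_\eps$ where, following the ``Boltzmann-like'' philosophy mentioned in the comments, $\AA_\eps f := M\,\chi_R\,(k_\eps * f)$ is an integral (gain-type) operator truncated to a ball, and $\BB_\eps$ collects the remaining loss term $-\|k_\eps\|_{L^1} f$, the tail part $M\chi_R^c(k_\eps * f)$, the drift $\mathrm{div}(xf)$ and $-M\chi_R f$; for $\eps=0$ one takes $\AA_0 f := M\chi_R f$ and $\BB_0 := \Lambda_0 - M\chi_R$, exactly as in Section~\ref{sec:FFP-FP}. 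The key structural fact to record is the monotone convergence \eqref{prop-DtoF:keps2}: $k_\eps \nearrow k_0 = |x|^{-d-\alpha}$, so that $\Lambda_\eps \to \Lambda_0$ in a suitable operator norm (say $\BBB(W^{s+?,p}(m), W^{s,p}(m))$ for an appropriate loss of derivatives dictated by $\alpha$), which will feed the continuity of $\eps \mapsto \lambda_\eps$ at $\eps=0$.

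The technical core is a sequence of lemmas in the spirit of Lemmas~\ref{lem-DtoC:Abounded}--\ref{lem-DtoC:regularizationL1L2}: (a) \emph{uniform boundedness of $\AA_\eps$} from $W^{s,p}$ to $W^{s,p}(\nu)$ — this is immediate from Young's inequality once one checks $\|k_\eps\|_{L^1}$ stays bounded (here $k_\eps$ has the same $L^1$ behaviour near the origin as $k_0$ cut off, so $\|k_\eps\|_{L^1} \le C$ uniformly but, unlike the discrete case, is \emph{not} normalized to $1$, which is harmless since the excess is absorbed into $\BB_\eps$); (b) \emph{uniform dissipativity of $\BB_\eps - a$ in $L^1_r$ and in some Sobolev scale}, obtained by the convexity inequality \eqref{eq-DtoF:calculconvexBIS}, a change of variables, and a careful estimate of $\int k_\eps(x-y)(m^p(y)-m^p(x))\,dy$ using that $r < \alpha/2$ (so that the singular kernel integrated against the polynomial weight growth gives a finite, controllably small, contribution at infinity and the factor multiplying $|f|^p m^p$ tends to $-qr$-type quantity, here to $\le a$ for $R,M$ large); (c) \emph{uniform regularization of $\AA_\eps S_{\BB_\eps}$ and its iterated convolution powers} from $L^1(m)$ into $L^2(G_\alpha^{-1/2})$ or into a Sobolev space, via a fractional analogue of Lemma~\ref{lem-DtoC:Iepsilon} — here one proves $\|\,| \xi|^{\alpha/2}\widehat{k_\eps * f}\,\|_{L^2}^2 \lesssim I_\eps(f)$ with $I_\eps(f) := \tfrac12 \int\!\!\int (f(x)-f(y))^2 k_\eps(x-y)\,dx\,dy$, using that $1 - \widehat{k_\eps}(\xi) \gtrsim |\xi|^\alpha$ uniformly in small $\eps$ together with $|\widehat{k_\eps}(\xi)| \lesssim |\xi|^{-\alpha}$-type decay of the truncated fractional kernel; then one bootstraps exactly as in Lemma~\ref{lem-DtoC:regularizationL2Hs} (and dualizes as in Lemma~\ref{lem-DtoC:regularizationL1L2}) to gain enough integrability/regularity to land in the small Hilbert space $L^2(G_\alpha^{-1/2})$ in which Theorem~\ref{theo-FtoC:GI}-type exponential decay holds for $\Lambda_0$.

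With these uniform estimates in hand, part (1) follows from the Krein--Rutman theorem of \cite{MS,Mbook*}: for fixed $\eps>0$, the Kato inequality and a strong maximum principle (arguing as in Lemmas~\ref{lem-DtoC:Kato}--\ref{lem-DtoC:strongPM}, using $k_\eps \ge \kappa_0 \mathds{1}_{B(0,\rho)}$ which holds since $k_\eps = k_0$ on an annulus containing such a ball for $\eps$ small) give positivity of the semigroup and of the ground state, and $\Lambda_\eps^* 1 = 0$ fixes the first eigenvalue at $0$ with a one-dimensional eigenspace spanned by a positive normalized $G_\eps$. Part (2) is then the uniform-in-$\eps$ spectral gap: one sets up the triple of spaces $X_1 \subset X_0 \subset X_{-1}$ (Sobolev spaces with polynomial weight, chosen so the dissipativity lemma holds on all three), records the facts (i)--(v) analogous to those on p.~\pageref{theo-DtoC:DtoC} — crucially, fact (v), $\Sigma(\Lambda_0) \cap D_{a_0} = \{0\}$, now comes from Section~\ref{sec:FFP-FP}/\cite{FFP-IT} rather than being elementary — and invokes the \emph{new} abstract perturbation theorem of this section (dissipativity needed only at $\eps=0$), whose hypotheses are met because $\|\Lambda_\eps - \Lambda_0\|$ in the relevant operator norm is $o(1)$ by \eqref{prop-DtoF:keps2} and dominated convergence. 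A final enlargement/extension argument (\cite{GMM}, \cite[Thm~1.1]{MMcmp}) transfers the gap from the small space $H^s_{r_0}$ to $L^1_r$.

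\textbf{Main obstacle.} The delicate point I expect is step (c), specifically the \emph{uniform} lower bound $1 - \widehat{k_\eps}(\xi) \gtrsim \min(|\xi|^\alpha, 1)$ and the matching fractional Nash/regularization estimate: unlike the classical case where $k_\eps$ is smooth and $\widehat{k_\eps}(\xi) = 1 - |\xi|^2 + O(|\xi|^3)$, here $k_\eps$ is a truncation of a genuinely singular kernel, so one must control both the inner cut-off (at scale $\eps$, which regularizes $k_\eps$ near $0$ but whose Fourier effect must be shown negligible) and the outer cut-off (at scale $1/\eps$, which removes the heavy tail responsible for the $|\xi|^\alpha$ behaviour at low frequency). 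Getting a single $I_\eps$-functional that simultaneously (i) controls a fractional Sobolev norm of $k_\eps * f$ uniformly in $\eps$ and (ii) is itself controlled by $-\tfrac{d}{dt}\|f_t\|^2$ along the $\BB_\eps$-flow, is the real work; everything else is a faithful adaptation of the machinery already deployed in Sections~\ref{sec:DFP-FP} and~\ref{sec:FFP-FP}.
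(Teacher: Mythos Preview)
Your plan contains a fatal error at the very first step, the choice of splitting. You propose $\AA_\eps f := M\,\chi_R\,(k_\eps * f)$ and claim ``this is immediate from Young's inequality once one checks $\|k_\eps\|_{L^1}$ stays bounded''. But it does not: with the definition in this section, $k_\eps(x) = \mathds{1}_{\eps\le|x|\le 1/\eps}\,|x|^{-d-\alpha} + \mathds{1}_{|x|<\eps}\,\eps^{-d-\alpha}$, so
\[
\|k_\eps\|_{L^1} \;\approx\; \int_{\eps}^{1/\eps} r^{-1-\alpha}\,dr \;+\; \eps^{-d-\alpha}\cdot \eps^{d}
\;\approx\; \eps^{-\alpha}\;\xrightarrow[\eps\to 0]{}\;\infty.
\]
Consequently your $\AA_\eps$ blows up in any operator norm as $\eps\to 0$, and the same problem contaminates $\BB_\eps$ (the piece $M\chi_R^c(k_\eps*f)$ you put there also has $L^1$-kernel of size $\eps^{-\alpha}$). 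The companion claim that $\AA_\eps\to\AA_0=M\chi_R$ fails for the same reason: here $k_\eps\nearrow k_0\notin L^1$, not $k_\eps\rightharpoonup\delta_0$ as in Section~\ref{sec:DFP-FP}. In short, the splitting imported from Section~\ref{sec:DFP-FP} is structurally incompatible with a kernel whose $L^1$ mass diverges.

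The paper cures this by a genuinely different decomposition. One introduces an \emph{inner} cutoff $\chi_\eta$, an \emph{outer} cutoff $\chi_L^c$, and the symmetric localization $\xi_R(x,y)=\chi_R(x)+\chi_R(y)-\chi_R(x)\chi_R(y)$, and sets
\[
\AA_\eps f(x)=\int_{\R^d} k_\eps(x-y)\,\chi_{\eta,L}(x-y)\,\xi_R(x,y)\,f(y)\,dy,
\]
with $\eta$ and $L$ fixed (independent of $\eps$) and $\eps\le\eta$, $L\le 1/\eps$. On the annulus $\eta\le|z|\le 2L$ one has $k_\eps\equiv k_0$, so in fact $\AA_\eps=\AA_0=:\AA$ for all $\eps$ small, and $\AA$ is a \emph{fixed} integral operator with a kernel that is smooth and compactly supported away from the diagonal. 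This $\AA$ gains two derivatives and arbitrary weight for free (Lemma~\ref{lem-DtoF:Abounded}), so the whole regularization burden you place on step (c) disappears: no fractional $I_\eps$-functional, no uniform Fourier lower bound $1-\widehat{k_\eps}(\xi)\gtrsim|\xi|^\alpha$ is ever needed. The ``main obstacle'' you identify is therefore not the real one; the real one is designing a splitting that is simultaneously uniform in $\eps$ and compatible with a non-integrable limit kernel, and the paper's answer is to localize the kernel to a \emph{fixed} annulus so that $\AA$ is literally $\eps$-independent. The remaining pieces $\BB_\eps^1,\dots,\BB_\eps^4$ carry the diverging mass but only in the combination ``$k_\eps*f-\|k_\eps\|_{L^1}f$'' restricted to small or large $|x-y|$, which is handled by convexity and a Taylor expansion (Lemma~\ref{lem-DtoF:BdissipLp}) exactly because those pieces keep the loss and gain terms together.
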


The method of the proof is similar to the one of Section \ref{sec:DFP-FP}. We introduce a suitable splitting $\Lambda_\eps = \AA_\eps + \BB_\eps$, establish some dissipativity and regularity properties on 
$\BB_\eps$ and $\AA_\eps S_{\BB_\eps}$ and apply the Krein-Rutman theory revisited in~\cite{MS,Mbook*}. However, let us emphasize that we introduce a new splitting for the fractional operator (a different one from Section \ref{sec:FFP-FP} and from \cite{FFP-IT}) and we also develop a new perturbative argument in the same line as \cite{MMcmp,Granular-IT*,Mbook*} but with some less restrictive assumptions on the operators $\AA_\eps$ and $\BB_\eps$, requiring that they are fulfilled only on the limit operator (i.e. for $\eps=0$).

\smallskip
\subsection{Splittings of $\Lambda_\eps$} For any $0<\beta<\beta'$, as previously, we introduce $\chi_\beta (x) := \chi(x/\beta)$, $\chi_\beta^c := 1- \chi_\beta$; we also define $\chi_{\beta, \beta'} := \chi_{\beta'}- \chi_{\beta}$ 
and introduce the function $\xi_\beta$ defined on $\R^d \times \R^d$ by $\xi_\beta(x,y) := \chi_\beta(x) + \chi_\beta(y) - \chi_\beta(x) \chi_\beta(y)$ and $\xi_\beta^c:=1 - \xi_\beta$. 
We denote $I_0(f) := -(-\Delta)^{\alpha/2}f$ and $I_\eps(f) := k_\eps * f- \|k_\eps\|_{L^1}  f$ for $\eps>0$. We split these operators into several parts: for any $\eps\ge0$,
\beqn \label{eq-DtoF:splittingIeps}
\begin{aligned}
I_\eps(f) (x) 
&= \int_{\R^d} k_\eps(x-y) \, \chi_\eta (x-y) \, (f(y) - f(x)- \chi(x-y) (y-x) \cdot \nabla f(x)) \, dy \\
&\quad +\int_{\R^d} k_\eps(x-y)\, \chi_L^c (x-y) \,  (f(y) - f(x)) \,dy \\
&\quad + \int_{\R^d} k_\eps(x-y)\, \chi_{\eta,L} (x-y) \,  (f(y) - f(x)) \,\xi_R^c(x,y) \, dy \\
&\quad - \int_{\R^d} k_\eps(x-y) \,  \chi_{\eta,L} (x-y) \,\xi_R(x,y) \, dy \, f(x) \\
&\quad + \int_{\R^d} k_\eps(x-y) \, \chi_{\eta,L} (x-y) \,\xi_R(x,y) f(y) \, dy \\
&=: \BB_\eps^1 f + \BB_\eps^2 f + \BB_\eps^3 f + \BB_\eps^4 f + \AA_\eps f.
\end{aligned}
\eeqn
where the constants $\eta \in [\eps,1]$, $R>0$ and $0<L \le 1/\eps$ will be chosen later. One can notice that given the facts that $\eta \ge \eps$ and $L \le 1/\eps$, we have for any $\eps>0$, $\AA_\eps = \AA_0 =: \AA$. 
Finally, we denote for any~$\eps \ge0$,
$$
\BB^5_\eps f = \hbox{div}(xf) \quad \text{and} \quad \BB_\eps f = \BB_\eps^1 f + \BB_\eps^2 f + \BB_\eps^3 f + \BB_\eps^4 f + \BB^5_\eps f.
$$ 
\smallskip
\subsection{Convergence $\BB_\eps \rightarrow \BB_0$.}
\begin{lem} \label{lem-DtoF:convergence}
Consider $p \in(1,\infty)$ and $q \in (0,\alpha/p)$. The following convergence holds:
$$
\|\BB_\eps -\BB_0\|_{\BBB(W^{s+2,p}(m),W^{s,p}(m))} \le \eta_1(\eps) \xrightarrow[\eps \rightarrow 0]{}0, \quad s=-2,0.
$$
\end{lem}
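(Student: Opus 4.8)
The plan is to compare $\BB_\eps$ and $\BB_0$ term by term through the splitting \eqref{eq-DtoF:splittingIeps}, since the only $\eps$-dependence is in the kernel $k_\eps$ (the confinement term $\BB^5_\eps=\hbox{div}(xf)$ is independent of $\eps$ and $\AA_\eps=\AA$ is too). Because $k_\eps(z)\nearrow k_0(z)=|z|^{-d-\alpha}$ pointwise by \eqref{prop-DtoF:keps2}, the natural route is to write $k_0-k_\eps = \mathds{1}_{|z|<\eps}(|z|^{-d-\alpha}-\eps^{-d-\alpha}) + \mathds{1}_{|z|>1/\eps}|z|^{-d-\alpha}$ and bound the contribution of each of the five pieces $\BB^i_0 f-\BB^i_\eps f$ by $\eta_1(\eps)\|f\|_{W^{s+2,p}(m)}$ with $\eta_1(\eps)\to 0$. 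It suffices to treat $s=-2$ (i.e.\ $W^{0,p}(m)=L^p(m)$ as target, $W^{2,p}(m)$ as domain); the case $s=0$ follows because $\partial_x$ commutes with the convolution structure of the $I_\eps$-part and with $\hbox{div}(x\,\cdot)$ only up to lower order terms that are handled the same way, exactly as in Step~2 of Lemma~\ref{lem-DtoC:convergence}.

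\textbf{The near-diagonal piece $\BB^1$.} This is where the singularity of $k_0$ lives, but it is cut by the factor $\chi_\eta(z)(f(y)-f(x)-\chi(z)z\cdot\nabla f(x))$ with $z=x-y$, which is $O(|z|^2\sup_{[x,y]}|D^2f|)$ for $|z|\le 1$. The difference $\BB^1_0 f-\BB^1_\eps f$ only sees the region $|z|<\eps$, where $k_0(z)-k_\eps(z)=|z|^{-d-\alpha}-\eps^{-d-\alpha}\le |z|^{-d-\alpha}$, so
\[
|\BB^1_0 f(x)-\BB^1_\eps f(x)| \le C\int_{|z|<\eps}|z|^{-d-\alpha}\,|z|^2\,\sup_{\theta\in[0,1]}|D^2f(x-\theta z)|\,dz,
\]
and after taking the $L^p(m)$ norm, using Minkowski's integral inequality and the fact that $m(x)\le C\,m(x-\theta z)m(z)$ with $m(z)\le C$ on $|z|<\eps\le 1$, one gets the bound $C\,\|f\|_{W^{2,p}(m)}\int_{|z|<\eps}|z|^{2-d-\alpha}\,dz = C\,\eps^{2-\alpha}\,\|f\|_{W^{2,p}(m)}$, which tends to $0$ since $\alpha<2$.

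\textbf{The remaining pieces $\BB^2,\BB^3,\BB^4$.} Here $k_0-k_\eps$ is bounded by $\mathds{1}_{|z|<\eps}|z|^{-d-\alpha}+\mathds{1}_{|z|>1/\eps}|z|^{-d-\alpha}$, and all three pieces carry a cutoff $\chi^c_L$ or $\chi_{\eta,L}$ that localizes $|z|$ away from $0$ (to $|z|\ge \eta\ge\eps$), so the near-zero part of $k_0-k_\eps$ contributes nothing for $\eps$ small, and only the far tail $|z|>1/\eps$ survives; there $(f(y)-f(x))$ or $f(y)$ or $f(x)\int k_\eps\,dz$ is controlled using that $f\in L^p(m)$ and $\int_{|z|>1/\eps}|z|^{-d-\alpha}\,dz = C\,\eps^{\alpha}\to 0$, plus weight estimates $m(x)\le C m(y)\langle z\rangle^q$ together with $q<\alpha/p$ to absorb the growth of $m$ against the integrable tail $|z|^{-d-\alpha}$ (this is precisely the point where the hypothesis $q<\alpha/p$ enters, exactly as in Lemma~\ref{lem-FtoC:Bdissip}). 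Collecting the four estimates gives $\eta_1(\eps):=C(\eps^{2-\alpha}+\eps^{\alpha})\to 0$.

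\textbf{Main obstacle.} The delicate point is the near-diagonal term $\BB^1$: one must be careful that the second-order Taylor remainder estimate is used with the $L^p$ norm pushed \emph{inside} the $z$-integral (Minkowski), and that the polynomial weight $m(x)$ is redistributed onto $m(x-\theta z)$ and a harmless factor on the bounded region $|z|<\eps$. The bookkeeping for the large-$|z|$ tails in $\BB^2$--$\BB^4$ is routine given the experience from Sections~\ref{sec:DFP-FP} and~\ref{sec:FFP-FP}, and relies only on $q<\alpha/p$ and $\alpha<2$; since $r<\alpha/2$ is assumed in Theorem~\ref{theo-DtoF:DtoF} the exponents needed later ($p=1$, $p=2$) are covered. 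The case $s=0$ adds nothing essential because differentiating the integral formula for $I_\eps$ simply moves $\partial_x$ onto $f$ inside, so the same estimates apply with $f$ replaced by $\partial_x f$, costing one extra derivative, whence the target $W^{s+2,p}(m)$.
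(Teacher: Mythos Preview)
Your analysis of the near-diagonal and far-tail contributions is sound, and the paper does essentially the same thing after first observing that $\BB_\eps-\BB_0=\Lambda_\eps-\Lambda_0$ (since $\AA_\eps=\AA_0$), which spares the term-by-term $\BB^i$ decomposition and reduces everything to a single Taylor remainder on $|z|\le1$ plus a tail on $|z|\ge1$. That simplification aside, your $s=0$ argument is fine.

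The genuine gap is the second case. You have the labels swapped: the estimate you actually prove (domain $W^{2,p}(m)$, target $L^p(m)$) is the case $s=0$, not $s=-2$. The case $s=-2$ means domain $L^p(m)$, target $W^{-2,p}(m)$, and it does \emph{not} follow by commuting with $\partial_x$; differentiation would take you from $s=0$ to $s=2$, the wrong direction. Negative Sobolev targets require a duality argument: one must show $\int f\,(\Lambda_\eps-\Lambda_0)(\phi\,m)\le \eta(\eps)\|f\|_{L^p(m)}\|\phi\|_{W^{2,p'}}$ for test functions $\phi$, i.e.\ bound $\|(\Lambda_\eps-\Lambda_0)(\phi\,m)/m\|_{L^{p'}}$ by $\eta(\eps)\|\phi\|_{W^{2,p'}}$. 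The weight $m$ inside the operator is the obstacle: you need a product-rule expansion
\[
(I_\eps-I_0)(\phi\,m)(x)=m(x)\,(I_\eps-I_0)(\phi)(x)+\int k_{0,\eps}(z)\,\phi(x+z)\,\TT_m(x,x+z)\,dz+\text{cross term},
\]
and then separate estimates on each piece, using $q<\alpha/p$ to control $m$ against the tail kernel. This is Step~2 of the paper's proof and is not captured by your ``$\partial_x$ commutes'' remark.
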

\begin{proof}
 Let us notice that $\BB_\eps - \BB_0=\Lambda_\eps-\Lambda_0$.

\noindent {\it Step 1.} We first consider the case $s=0$ and we introduce the notation $k_{0,\eps}:= k_0 - k_\eps$. We compute
$$
\begin{aligned}
&\quad \|\Lambda_\eps f - \Lambda_0 f\|^p_{L^p(m)} \\
&= \int_{\R^d} \left| \int_{\R^d} k_{0,\eps}(z)  \, (f(x+z) - f(x) - \chi(z) z \cdot \nabla f(x)) \, dz \right|^p \, m^p(x) \, dx \\
&\le C \, \int_{\R^d} \left| \int_{|z| \le 1} k_{0,\eps}(z)  \, (f(x+z) - f(x)- \chi(z) z \cdot \nabla f(x)) \, dz \right|^p \, m^p(x) \, dx \\
&\quad +  C \, \int_{\R^d} \left| \int_{|z| \ge 1} k_{0,\eps}(z)  \, (f(x+z) - f(x)- \chi(z) z \cdot \nabla f(x)) \, dz \right|^p \, m^p(x) \, dx\\
&=: T_1+T_2.
\end{aligned}
$$
To deal with $T_1$, we perform a Taylor expansion of $f$ of order $2$ and we use that 
$\chi(z)=1$ if $|z| \le 1$, in order to get
$$
T_1 \le C \, \int_{\R^d} \, \left( \int_{|z|Â \le 1}  k_{0,\eps}(z) \, |z|^2 \int_0^1 (1-s) \, |D^2f(x + s z)| \, ds \, dz \right)^p \, m^p(x) \, dx.
$$
From H\"older inequality applied with the measure $\mu_\eps(dz) := \mathds{1}_{|z| \le 1}\, k_{0,\eps}(z) \, |z|^2 \, dz$, we have 
$$
\begin{aligned}
T_1 
&\le C \, \left(\int_{\R^d} \mu_\eps(dz)\right)^{p/p'} \,  \int_{\R^d\times \R^d}  \left(\int_0^1   |D^2f(x + s z)| \, ds \right)^p  \mu_\eps (dz) \, m^p(x) \, dx
\end{aligned}
$$
where $p' = p/(p-1)$ is the H\"older conjugate exponent of $p$. 
Using now Jensen inequality, 
we get
$$
\begin{aligned}
T_1 &\le C \, \left(\int_{\R^d} \mu_\eps(dz)\right)^{p/p'} \,  \int_{\R^d\times \R^d}  \int_0^1  |D^2f(x + s z)|^p \, ds  \, \mu_\eps (dz) \, m^p(x) \, dx \\
& \le C \, \left(\int_{\R^d} \mu_\eps(dz)\right)^{p} \int_{\R^d} |D^2f(x)|^p \, m^p(x) \, dx , 
\end{aligned}
$$
with 
$$
\int_{\R^d} \mu_\eps(dz) = \int_{|z|Â \le 1} k_{0,\eps}(z) \, |z|^{2} \, dz
\xrightarrow[\eps \rightarrow 0]{} 0
$$
by Lebesgue dominated convergence theorem. 
To treat $T_2$, we first notice that the term involving $\nabla f(x)$ gives no contribution, because $k_{0,\eps} \chi  \equiv 0$ for $\eps \in (0,1/2)$, so that  performing  similar computations as for $T_1$, we have 
$$
\begin{aligned}
T_2 
&\le C \, \int_{\R^d} \left| \int_{|z| \ge 1} k_{0,\eps}(z)  \, (f(x+z) - f(x)) \, dz \right|^p \, m^p(x) \, dx \\
&\le C \,\left(\int_{|z| \ge 1} k_{0,\eps}(z) \, dz\right)^{p/p'}  \\
&\qquad \int_{\R^d}  \int_{|z|Â \ge 1} |k_{0,\eps}(z)| (|f|^p(x+z) +|f|^p(x)) \, dz \,m^p(x) \, dx\\
&\le C \,  \left(\int_{|z|Â \ge 1} k_{0,\eps}(z) \,m^p(z) \, dz \right)^p\, \int_{\R^d} |f|^p(x) \, m^p(x) \, dx,
\end{aligned}
$$
with 
$$
\int_{|z|Â \ge 1} k_{0,\eps}(z) \,m^p(z) \, dz \xrightarrow[\eps \rightarrow 0]{} 0
$$
by the Lebesgue dominated convergence theorem again.
As a consequence, we obtain
$$
\|(\Lambda_\eps - \Lambda_0)(f)\|_{L^p(m)} \le \eta(\eps) \|f\|_{W^{2,p}(m)}, \quad  \eta(\eps) \xrightarrow[\eps \rightarrow 0]{} 0.
$$

 \noindent {\it Step 2.} 
We now consider the case $s=-2$, and we recall that by definition
$$
\begin{aligned}
\| \Lambda_\eps f - \Lambda_0 f\|_{W^{-2,p}(m)} &=
\sup_{\|\phi\|_{W^{2,p'}} \le 1} \int_{\R^d} f \, (\Lambda_\eps - \Lambda_0)^*(\phi \, m) \\
&= \sup_{\|\phi\|_{W^{2,p'}} \le 1} \int_{\R^d} f \, (\Lambda_\eps - \Lambda_0)(\phi \, m)
\end{aligned}
$$
where $p'=p/(p-1)$ and because $(\Lambda_\eps - \Lambda_0)^* = \Lambda_\eps - \Lambda_0$ (where $\Lambda^*$ stands for the formal dual operator of~$\Lambda$). For sake of simplicity, we introduce the notation 
\beqn \label{eq:psinu}
\TT_{\nu}(x,y) := \nu(y) - \nu(x) - \nabla \nu (x) \cdot (y-x) \, \chi(x-y).
\eeqn
We then estimate the integral in the right hand side of the previous equality:
$$
\begin{aligned}
\int_{\R^d} f \, (\Lambda_\eps - \Lambda_0)(\phi \, m)
&= \int_{\R^d} \frac{(\Lambda_\eps - \Lambda_0)(\phi \, m)}{m} \, f \, m \\
&\le \| (\Lambda_\eps - \Lambda_0)(\phi \, m) /m \|_{L^{p'}} \, \|f \|_{L^p(m)}.
\end{aligned}
$$
Moreover, 
\beqn \label{eq-DtoF:I(phim)}
\begin{aligned}
&\quad (\Lambda_\eps - \Lambda_0)(\phi \, m)(x)  = (I_\eps - I_0)(\phi \, m)(x) \\
&= (I_\eps - I_0)(\phi)(x) \, m(x) + \int_{\R^d} k_{0,\eps}(z) \, \phi(x+z) \, \TT_m(x,x+z) \, dz \\
&\quad + \int_{\R^d} k_{0,\eps}(z) \, \chi(z) \, z \cdot \nabla m(x) \, (\phi(x+z) - \phi(x)) \, dz.
\end{aligned}
\eeqn
We deduce that 
$$
\begin{aligned}
& \|Â (\Lambda_\eps - \Lambda_0)(\phi \, m) /m \|^{p'}_{L^{p'}}
 \le C \, \Bigg( \| (I_\eps - I_0)(\phi) \|^{p'}_{L^{p'}} \\
& +  \int_{\R^d} \frac{1}{m^{p'}(x)} \left| \int_{\R^d} k_{0,\eps}(z)  \phi(x+z)\, \TT_m(x,x+z) \,  dz \right|^{p'}  dx \\
& + \int_{\R^d}  \frac{1}{m^{p'}(x)} \left| \int_{\R^d} k_{0,\eps}(z) \, \chi(z) \, z \cdot \nabla m(x) \, (\phi(x+z) - \phi(x)) \, dz \right|^{p'} \, dx \Bigg) \\
&=: C \, (J_1 + J_2 + J_3).
\end{aligned}
$$
To deal with $J_1$, we use the step 1 of the proof which gives us 
$$
 \| (I_\eps - I_0)(\phi) \|_{L^{p'}} \le \eta(\eps) \|\phi\|_{W^{2,p'}}, \quad \eta(\eps) \xrightarrow[\eps \rightarrow 0]{} 0.
 $$
 The term $J_2$ is split into two parts:
 $$
 \begin{aligned}
 J_2 &\le C \Bigg( \int_{\R^d} \frac{1}{m^{p'}(x)} \left| \int_{|z| \le 1} k_{0,\eps}(z)  \phi(x+z)  \, \TT_m(x,x+z) \,  dz \right|^{p'}  dx  \\
&\quad  +  \int_{\R^d} \frac{1}{m^{p'}(x)} \left| \int_{|z| \ge 1} k_{0,\eps}(z)  \phi(x+z) \, \TT_m(x,x+z) \,dz \right|^{p'} dx \Bigg) \\
&=: J_{21} + J_{22}.
 \end{aligned}
 $$
 We first notice that for $|z|\le 1$, 
 $$
 \TT_m(x,x+z) = \int_0^1 (1-\theta) D^2 m(x+\theta z)(z,z) \, d\theta, 
 $$
 which implies that 
 $$
 J_{21}
 \le C   \int_{\R^d} \frac{1}{m^{p'}(x)} \left( \int_0^1 \int_{|z| \le 1} k_{0,\eps}(z) |z|^2 |D^2 m(x+\theta z)|   |\phi|(x+z)   d\theta dz \right)^{p'} dx .
 $$
 Since $0<q<2$, $|D^2m| \le C$ and $1/m^{p'} \le C$ in $\R^d$, we thus deduce using H\"older inequality and a change of variable, 
 $$
 \begin{aligned}
 J_{21}
& \le C \, \left(\int_{|z| \le 1} k_{0,\eps}(z) \, |z|^2 \, dz \right)^{p'}\, \|\phi\|^{p'}_{L^{p'}} \quad \text{with} \quad \int_{|z| \le 1} k_{0,\eps}(z) \, |z|^2 \, dz \xrightarrow[\eps \rightarrow 0]{} 0.
\end{aligned}
 $$
 Concerning $J_{22}$, we use $|z \chi(z)| \le C$ for any $|z| \ge 1$ and $|\nabla m| \le C \, m$ in $\R^d$, and we obtain that $J_{22}$ is bounded from above by
 $$
 \begin{aligned}
 & C   \int_{\R^d} \frac{1}{m^{p'}(x)} \left( \int_{|z| \ge 1} k_{0,\eps}(z)  |\phi|(x+z)  \left(m(x+z) +m(x) + |\nabla m(x)| \right)  dz \right)^{p'}  dx \\
 &\le C   \int_{\R^d} \left( \int_{|z| \ge 1} k_{0,\eps}(z) \, |\phi|(x+z)  \, m(z) \, dz \right)^{p'}  dx , 
 \end{aligned}
 $$
 which implies, using H\"{o}lder inequality and a change of variable,
 $$
  \begin{aligned}
 &J_{22} 
 \le C \,  \left(\int_{|z|Â \ge 1} k_{0,\eps}(z) \, m^p(z) \, dz\right)^{p'} \,  \| \phi\|^{p'}_{L^{p'}}\\
 &\qquad \text{with} \quad \int_{|z| \ge 1} k_{0,\eps}(z) \,m^p(z) \, dz \xrightarrow[\eps \rightarrow 0]{} 0.
 \end{aligned}
 $$
 Finally, we handle $J_3$ performing a Taylor expansion of $\phi$: 
 $$
 \phi(x+z) - \phi(x) = \int_0^1 (1-s) \, \nabla \phi(x+ s z)   \cdot z\, ds
 $$
 which implies, using that $|\nabla m|^{p'}/m^{p'} \in L^\infty(\R^d)$, H\"{o}lder inequality and a change of variable,
 $$
 \begin{aligned}
 J_3 
 &\le \left(\int_{\R^d}  \frac{|\nabla m|^{p'}(x)}{m^{p'}(x)} \left( \int_{|z|\le 2} k_{0,\eps}(z)\,|z|^2 \int_0^1 |\nabla \phi|(x+s z) \, ds\, dz \right)^{p'} \, dx \right)^{1/p'} \\
&\le C \, \int_{|z| \le 2} k_{0,\eps}(z) \,|z|^2 \, dz \, \| \nabla \phi \|_{L^{p'}}
\quad \text{with} \quad \int_{|z| \le 2} k_{0,\eps}(z) \,|z|^2 \, dz \xrightarrow[\eps \rightarrow 0]{} 0.
\end{aligned}
 $$
 As a consequence, we obtain that 
 $$
  \|Â (\Lambda_\eps - \Lambda_0)(\phi \, m) /m \|_{L^{p'}} \le \eta(\eps) \|\phi\|_{W^{2,p'}}, \quad \eta(\eps) \xrightarrow[\eps \rightarrow 0]{} 0,
$$
which concludes the proof.
\end{proof}

\smallskip
\subsection{Regularization properties of $\AA_\eps$}
\begin{lem} \label{lem-DtoF:Abounded}
For any $p \in (1,\infty)$, $(s,t)=(-2,0)$ or $(0,2)$, the operator $\AA_\eps = \AA_0 =\AA$ defined in~\eqref{eq-DtoF:splittingIeps} by 
$$
\AA f = \int_{\R^d} k_0(x-y) \, \chi_{\eta,L} (x-y) \,\xi_R(x,y) f(y) \, dy 
$$
is bounded from $W^{s,p}$ to $W^{t,p}(\nu)$ for any weight function $\nu$.
\end{lem}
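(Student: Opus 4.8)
The plan is to notice that, with the parameters $\eta$, $L$ and $R$ fixed (recall that $\AA_\eps=\AA_0=\AA$ does not depend on $\eps$), the operator $\AA$ is an integral operator
\[
\AA f(x)=\int_{\R^d}K(x,y)\,f(y)\,dy,\qquad K(x,y):=k_0(x-y)\,\chi_{\eta,L}(x-y)\,\xi_R(x,y),
\]
whose kernel $K$ is \emph{smooth and compactly supported} on $\R^d\times\R^d$; consequently $\AA$ is regularizing of infinite order and produces compactly supported functions, so that any locally bounded weight $\nu$ is harmless in the image space. First I would record the elementary properties of $K$. The factor $\chi_{\eta,L}=\chi_L-\chi_\eta$ vanishes for $|x-y|\le\eta$ and for $|x-y|\ge 2L$, hence confines $x-y$ to the annulus $\eta\le|x-y|\le 2L$, on which $z\mapsto k_0(z)=|z|^{-d-\alpha}$ is $C^\infty$ with all derivatives bounded. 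Since moreover $\xi_R(x,y)=1-\chi_R^c(x)\chi_R^c(y)$ vanishes unless $|x|\le 2R$ or $|y|\le 2R$, the support constraint $|x-y|\le 2L$ forces $\Supp K\subset B(0,\rho)\times B(0,\rho)$ with $\rho:=2R+2L$. Each of the three factors being $C^\infty$ in $(x,y)$, the kernel $K$ (and hence also the kernel $K(y,x)$ of the formal adjoint $\AA^*$) is $C^\infty$, with all partial derivatives bounded and supported in that fixed compact set.

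For $(s,t)=(0,2)$, I would differentiate under the integral sign: for $f\in L^p$ and any multi-index $\beta$ with $|\beta|\le 2$ one has $\partial_x^\beta(\AA f)(x)=\int_{\R^d}(\partial_x^\beta K)(x,y)f(y)\,dy$, which by H\"older's inequality and the bound $\|(\partial_x^\beta K)(x,\cdot)\|_{L^{p'}}\le C$ (finite because of the compact support in $y$) is at most $C\|f\|_{L^p}$, and which is supported in $B(0,\rho)$. Since $\nu$ is bounded on $B(0,\rho)$, summing over $|\beta|\le 2$ gives $\|\AA f\|_{W^{2,p}(\nu)}\le C\|f\|_{L^p}$. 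For $(s,t)=(-2,0)$, I would dualize, using that $W^{-2,p}$ is the dual of $W^{2,p'}$ with $p'=p/(p-1)$: after the routine density justification, for $\psi\in L^{p'}$,
\[
\int_{\R^d}(\AA f)\,\nu\,\psi
=\big\langle f,\ \AA^*(\nu\psi)\big\rangle,\qquad
\AA^*g(y)=\int_{\R^d}K(x,y)\,g(x)\,dx .
\]
The very same differentiation-under-the-integral estimate, now in the variable $y$ and using that $K(\cdot,y)$ is bounded and supported in $B(0,\rho)$ together with the boundedness of $\nu$ there, yields $\|\AA^*(\nu\psi)\|_{W^{2,p'}}\le C\|\psi\|_{L^{p'}}$. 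Hence $\big|\int(\AA f)\,\nu\,\psi\big|\le\|f\|_{W^{-2,p}}\,\|\AA^*(\nu\psi)\|_{W^{2,p'}}\le C\|f\|_{W^{-2,p}}\|\psi\|_{L^{p'}}$, and taking the supremum over $\|\psi\|_{L^{p'}}\le1$ gives $\AA\in\BBB(W^{-2,p},L^p(\nu))$.

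The only point requiring care is the support bookkeeping that makes $K$ compactly supported in \emph{both} variables at once — it uses crucially that $\eta>0$, $L<\infty$ and $R<\infty$ are fixed — after which the statement is just the standard fact that an integral operator with a smooth, compactly supported kernel gains arbitrarily many derivatives (and, here, absorbs the weight thanks to the compactness of the output support). I do not expect any further obstacle; the argument moreover shows, with no extra work, that $\AA$ maps $W^{-k,p}$ into $W^{m,p}(\nu)$ for all $k,m\ge 0$.
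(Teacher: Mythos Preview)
Your proof is correct and follows essentially the same approach as the paper: both rest on the observation that the support conditions on $\chi_{\eta,L}$ and $\xi_R$ force the kernel $K(x,y)$ to be smooth and compactly supported in $B(0,2(R+L))\times B(0,2(R+L))$, then treat the case $(0,2)$ by differentiating the kernel and the case $(-2,0)$ by duality. The only cosmetic differences are that the paper uses Young's inequality (bounding by the $L^1$ norm of $k_0\,\mathds{1}_{\eta\le|\cdot|\le 2L}$) rather than H\"older, and exploits the symmetry $K(x,y)=K(y,x)$ to write $\AA^*=\AA$ in the duality step, which slightly shortens the argument.
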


\begin{proof}
First, one can notice that 
\beqn \label{eq-DtoF:ineqcaract}
\begin{aligned}
\xi_R(x,y) \, \chi_{\eta,L}(x-y)
&\le  \left(\chi_R(x) + \chi_R(y)\right)\, \chi_{\eta,L}(x-y) \\
&\le  \left(\mathds{1}_{|x| \le 2R} + \mathds{1}_{|y| \le 2R}\right) \, \mathds{1}_{\eta \le |x-y|Â \le 2L} \\
&\le 2 \, \mathds{1}_{\eta \le |x-y|Â \le 2L} \, \mathds{1}_{|x|Â \le 2(R+L)} \, \mathds{1}_{|y|Â \le 2(R+L)},
\end{aligned}
\eeqn
the proof is hence immediate in the case $s=t=0$ using Young inequality:
$$
 \|\AA f\|_{L^p(\nu)}
\le C \, \|\AA f \|_{L^p} \le \| k_0 \, \mathds{1}_{\eta \le |\cdot| \le 2L} \|_{L^1} \, \|f\|_{L^p}.
$$

We now deal with the case $(s,t)=(0,2)$. First, we have for $\ell=1,2$
$$
\partial^\ell_x (\AA f)(x) = \sum_{i+j+k=\ell} \int_{\R^d} \partial^i_x (k_0(x-y)) \, \partial^j_x (\chi_{\eta,L}(x-y)) \, \partial^k_x (\xi_R(x,y)) \, f(y) \, dy,
$$
and for any $(i,j,k)$ such that $i+j+k=\ell$, 
$$
\begin{aligned}
&\quad | \partial^i_x (k_0(x-y)) \, \partial^j_x (\chi_{\eta,L}(x-y)) \, \partial^k_x (\xi_R(x,y)) | \\
&\le C \, | \partial^i_x (k_0(x-y)) | \,\mathds{1}_{\eta \le |x-y| \le 2L} \, \mathds{1}_{|x| \le 2(R+L)}.
\end{aligned}
$$
As a consequence, for $\ell = 0,1,2$,
$$
\| \partial_x^\ell (\AA f)\|_{L^p(\nu)} \le \sum_{i=0}^2 \| \partial_x^i k_0 \, \mathds{1}_{\eta \le |\cdot|\le2L} \|_{L^1} \, \|f\|_{L^p},
$$
which concludes the proof in the case $(s,t)=(0,2)$.

Finally,  arguing by duality, we have
$$
\begin{aligned}
\|\AA f\|_{L^p(\nu)} &\le   C \, \sup_{\|\phi\|_{L^{p'}} \le 1} \, \int_{\R^d} (\AA f) \, \phi 
= C \, \sup_{\|\phi\|_{L^{p'}} \le 1} \, \int_{\R^d} (\AA \phi) \, f \\
&\le C \, \sup_{\|\phi\|_{L^{p'}} \le 1} \|f\|_{W^{-2,p}} \, \|\AA \phi \|_{W^{2,p'}}
\le 
C \,  \|f\|_{W^{-2,p}},
\end{aligned}
$$
which proves the estimate in the case $(s,t)=(-2,0)$. 
\end{proof}

\smallskip
\subsection{Dissipativity properties of $\BB_\eps$ and $\BB_0$}
\begin{lem} \label{lem-DtoF:BdissipLp}
Consider $p \in [1,2]$ and $q \in (0,\alpha/p)$. For any $a>d(1-1/p)-q$, there exist $\eps_1>0$, $\eta>0$, $L > 0$ and $R >0$ such that for any $\eps \in [0, \eps_1]$, $\BB_\eps - a$ is dissipative in $L^p(m)$.
 \end{lem}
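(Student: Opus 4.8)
The plan is to mimic the $L^p(m)$-dissipativity computation of Lemma~\ref{lem-DtoC:BdissipLp}, decomposing $\BB_\eps = \BB^1_\eps + \cdots + \BB^5_\eps$ according to \eqref{eq-DtoF:splittingIeps} and estimating each contribution
$$
T_i := \int_{\R^d} (\BB^i_\eps f)\, (\mathrm{sign}\, f)\, |f|^{p-1}\, m^p \, dx
$$
separately, then showing the total is bounded by $\int_{\R^d} |f|^p m^p\, \psi^\eps$ for a multiplier $\psi^\eps$ whose limsup as $|x|\to\infty$ (uniformly in $\eps$) is at most $d(1-1/p)-q$. As in Lemma~\ref{lem-DtoC:BdissipLp}, the drift term $\BB^5_\eps f = \mathrm{div}(xf)$ contributes exactly $d(1-1/p) - q|x|^2/\langle x\rangle^2$ after integration by parts, and the operator $-M\chi_R$ (which is hidden inside $\AA$ here, not $\BB_\eps$, so actually it will instead be the large-$|x|$ decay of $q|x|^2/\langle x\rangle^2$ together with smallness of the jump terms that does the job). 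First I would use the convexity inequality \eqref{eq-DtoF:calculconvexBIS}, i.e. $(f(y)-f(x))\,\mathrm{sign}(f(x))|f(x)|^{p-1} \le \frac1p(|f(y)|^p - |f(x)|^p)$, on each of the terms $\BB^1_\eps$, $\BB^2_\eps$, $\BB^3_\eps$ (which all have the structure $\int k_\eps(x-y)(\text{cutoff})(f(y)-f(x))$ up to the correction $-\chi(x-y)(y-x)\cdot\nabla f$ in $\BB^1_\eps$), convert to a kernel acting on $m^p$ by the change of variables $x\leftrightarrow y$, and then Taylor-expand $m^p(y)-m^p(x)$.

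The term $\BB^1_\eps$ is the delicate local part: one Taylor-expands $m^p$ to second order, the first-order term being killed either by the correction $\chi(x-y)(y-x)\cdot\nabla f(x)$ integrated against the (integrable near $0$) kernel $k_\eps\chi_\eta$, and the second-order remainder is controlled by $\int_{|z|\le\eta} k_\eps(z)|z|^2\, dz \cdot \langle x\rangle^{pq-2}$, which is finite precisely because $k_\eps(z)\le k_0(z) = |z|^{-d-\alpha}$ with $\alpha<2$, giving a contribution $\le C_\eta \langle x\rangle^{pq-2}$ with $C_\eta$ small when $\eta$ is small (uniformly in $\eps\le\eps_1$, using $\eta\ge\eps$). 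For $\BB^2_\eps$ (the far-field part with $\chi_L^c(x-y)$), the kernel $k_\eps(x-y)\chi_L^c(x-y)$ has $L^1$ mass $\le \int_{|z|\ge L} |z|^{-d-\alpha}dz = C L^{-\alpha}$, and after the $m^p(y)-m^p(x)$ trick one bounds this by $C(L)\int |f|^p m^p$ with $C(L)\to 0$ as $L\to\infty$ — this is where $q<\alpha/p$ enters to ensure $\int_{|z|\ge L}|z|^{-d-\alpha}m^p(z)\,dz<\infty$. The term $\BB^3_\eps$ has the bounded annular kernel $k_\eps\chi_{\eta,L}$ cut by $\xi_R^c$, so it is supported in $|x|\ge$ roughly $R$ and contributes something like $C_{\eta,L}\,\mathds{1}_{|x|\ge R}\langle x\rangle^{pq}$-ish — more carefully, again via $m^p(y)-m^p(x)$, a term whose multiplier can be absorbed; the key point is that $\BB^4_\eps f = -(\int k_\eps\chi_{\eta,L}\xi_R\,dy) f$ is a nonpositive multiplication operator, so $T_4\le 0$ and can be discarded (or used as a good term).

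Assembling, I would obtain
$$
\int_{\R^d}(\BB_\eps f)\,(\mathrm{sign}\,f)|f|^{p-1}m^p \le \int_{\R^d}|f|^p m^p\,\psi^\eps_{\eta,L,R}(x),
\qquad
\psi^\eps_{\eta,L,R}(x) = d\Big(1-\tfrac1p\Big) - \frac{q|x|^2}{\langle x\rangle^2} + C_\eta \langle x\rangle^{-2} + C(L) + \text{(annular/far terms)},
$$
and since $\psi^\eps_{\eta,L,R}(x)\to d(1-1/p)-q$ as $|x|\to\infty$ with the error terms made small by first choosing $L$ large (killing the far-field constant), then $\eta$ small (killing the local remainder), then $R$ large, all uniformly in $\eps\in[0,\eps_1]$ for $\eps_1$ small, one concludes that for any $a>d(1-1/p)-q$ there is a choice of $\eta,L,R,\eps_1$ making $\psi^\eps_{\eta,L,R}\le a$ on all of $\R^d$, hence $\BB_\eps - a$ is dissipative in $L^p(m)$. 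The main obstacle I expect is bookkeeping the annular term $\BB^3_\eps$ and making sure all the cutoff geometry ($\xi_R$ vs $\chi_{\eta,L}$, the relation $\eta\ge\eps$, $L\le1/\eps$) is compatible so that every constant is genuinely uniform in $\eps$; the case $\eps=0$ (the fractional limit) should follow from the same computation with $k_\eps$ replaced by $k_0$ and is essentially already in \cite{FFP-IT}.
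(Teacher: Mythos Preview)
Your overall strategy is right and matches the paper's: decompose $\BB_\eps$ via \eqref{eq-DtoF:splittingIeps}, apply the convexity inequality \eqref{eq-DtoF:calculconvexBIS} to each jump piece, swap to a kernel acting on $m^p$, and reduce to a pointwise multiplier $\psi^\eps_{\eta,L,R}$. Your treatments of $T_1$, $T_2$, $T_5$ are essentially the paper's (for $T_2$ the paper uses the $pq/2$-H\"older estimate $|m^p(x)-m^p(y)|\le C\langle x-y\rangle^{pq}\min(\langle x\rangle^{pq/2},\langle y\rangle^{pq/2})$ to separate the $x$ and $y$ dependence cleanly, which is a detail you would need to supply).

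There is, however, a genuine gap. You write that $T_4\le 0$ ``can be discarded (or used as a good term)'' and then, in the assembly, you drop it: your multiplier $\psi^\eps_{\eta,L,R}$ contains no negative piece on compact sets. But at $x=0$ the drift contribution is $d(1-1/p)\ge 0$ and $-q|x|^2/\langle x\rangle^2=0$, so $\psi^\eps_{\eta,L,R}(0)\ge d(1-1/p)$, and for any $a$ strictly below $d(1-1/p)$ (which is the whole interesting range $d(1-1/p)-q<a<d(1-1/p)$) you cannot get $\psi^\eps\le a$ everywhere. Your parenthetical remark that the $-M\chi_R$ mechanism ``is hidden inside $\AA$ here, not $\BB_\eps$'' is a misreading: the analogue of $-M\chi_R$ \emph{is} inside $\BB_\eps$, it is precisely $\BB^4_\eps$. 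The paper uses $\xi_R(x,y)\ge\chi_R(x)$ to get
$$
T_4 \le -\Bigl(\int_{2\eta\le|z|\le L} k_\eps(z)\,dz\Bigr)\int_{|x|\le R}|f|^p m^p,
$$
and the crucial point is that the coefficient $A^\eps_{\eta,L}:=\int_{2\eta\le|z|\le L}k_\eps(z)\,dz\to+\infty$ as $\eta\to 0$ (since $k_0$ is not integrable at the origin). This is what forces $\psi^\eps_{\eta,L,R}\le a$ on the compact region $\{|x|\le R\}$, and it dictates the order in which the constants are chosen: first $\eta,L,R$ so that the far-field part of $\psi^\eps$ is $\le a$, then decrease $\eta$ (and correspondingly $\eps_1$, keeping $\eta\ge\eps_1$) so that $-A^\eps_{\eta,L}$ overwhelms the bounded positive contributions on $\{|x|\le R\}$. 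Without this step your argument does not close.
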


\begin{proof}
We consider $a>d(1-1/p)-q$ and we estimate for $i=1, \dots,5$ the integral $\int_{\R^d}\left( \BB_\eps^i f\right) \,(\hbox{sign} f) \, |f|^{p-1}\, m^p$. 

We first deal with $\BB_\eps^1$ in both cases $\eps>0$ and $\eps=0$ simultaneously noticing that for any $\eps \ge 0$,
$$
\BB_\eps^1 f (x)= \int_{\R^d} (k_\eps \, \chi_\eta) (x-y) \, (f(y)-f(x) - (y-x) \cdot \nabla f(x) ) \, dy. 
$$
Then, using \eqref{eq-DtoF:calculconvexBIS}, we have 
$$
\begin{aligned}
&\quad \int_{\R^d} \left(\BB_\eps^1 f\right)  (\hbox{sign} f) \, |f|^{p-1} \, m^p \\
&\leq {1 \over p} \int_{\R^d \times \R^d} \left( |f|^p(y)-|f|^p(x) - (y-x) \cdot \nabla |f|^p(x)\right)   (k_\eps \, \chi_\eta) (x-y)  dy \, m^p(x) \, dx \\
&= {1 \over p} \int_{\R^d \times \R^d} (m^p(y) - m^p(x) - (y-x) \cdot \nabla m^p(x)) \,  (k_\eps \, \chi_\eta) (x-y)\, dy \, |f|^p(x) \, dx.
\end{aligned}
$$
Using a  Taylor expansion of order $2$ and that  $pq<\alpha<2$, we get
$$
\begin{aligned}
& \int_{\R^d} (m^p(y) - m^p(x)- (y-x) \cdot \nabla m^p(x)) \,  (k_\eps \, \chi_\eta) (x-y) \, dy \\
&= \int_{\R^d} \int_0^1 (1-\theta) \, D^2 m^p(x + \theta z))(z,z) \,  (k_\eps \, \chi_\eta) (z) \, d\theta \, dz \\
&\leq C \, \int_{|z|\leq 2\eta}  |z|^2 \, k_0(z) \, dz,
\end{aligned}
$$
and thus 
$$
\begin{aligned}
&\int_{\R^d} \left(\BB_\eps^1 f\right) (\hbox{sign}  f) \, |f|^{p-1} \, m^p \leq \kappa_{\eta} \int_{\R} |f|^p \, m^p \\
&\qquad \text{with} \quad \kappa_{\eta} \approx \int_{|z|\leq2\eta} k_0(z) \, |z|^2 \, dz\xrightarrow[\eta \rightarrow 0]{} 0.
\end{aligned}
$$

Concerning $\BB^2_\eps$, we also treat the case $\eps>0$ and $\eps=0$ in a same time using~\eqref{eq-DtoF:calculconvexBIS}:
$$
\begin{aligned}
&\quad \int_{\R^d} \left(\BB_\eps^2 f \right) (\hbox{sign}f) \, |f|^{p-1} \, m^p \\
&\leq  {1 \over p} \int_{\R^d \times \R^d} k_\eps(x-y) \, \left( |f|^p(y) - |f|^p(x)\right) \, \chi_L^c(x-y) \, m^p(x) \, dy \, dx \\
&=  {1 \over p}  \int_{\R^d \times \R^d} k_\eps(x-y) \, \left( m^p(y) - m^p(x)\right) \,  \chi_L^c(x-y) \, |f|^p(x) \, dy \, dx.
\end{aligned}
$$
We now use the fact that the function $s \mapsto s^{pq/2}$ is $pq/2$-H\"{o}lder continuous since $pq/2 < \alpha/2 \leq 1$ to obtain 
\beqn \label{eq-DtoF:calculm}
\begin{aligned}
&\quad |m^p(x) - m^p(y)| 
\leq C \, \left|ÂÃÂÂÂ |x| - |y|ÂÃÂÂÂ \right|^{pq/2} \, \left(|x|+|y|\right)^{pq/2} \\
&\leq C \, |x-y|^{pq/2} \, \min\left(\left(|x| + |x-y| + |x| \right)^{pq/2}, \left(|y| + |x-y| + |y| \right)^{pq/2}\right)\\
&\leq C \, \left(\min\left(|x-y|^{pq/2} |x|^{pq/2},|x-y|^{pq/2} |y|^{pq/2}\right)+ |x-y|^{pq} \right)  \\
&\leq C \, \langle x-y \rangle^{pq} \, \min \left(\langle x \rangle^{pq/2}, \langle y \rangle^{pq/2} \right).
\end{aligned}
\eeqn
We deduce that
$$
\begin{aligned}
&\quad \int_{\R^d} \left(\BB_\eps^2 f\right)  (\hbox{sign} f) \, |f|^{p-1} \, m^p
\le C \, \int_{|z| \ge L} k_0(z) \, m^p(z) \, dz \, \int_{\R^d} |f|^p(x) \, \langle x \rangle^{pq/2} \, dx\\
&\le \kappa_L \, \int_{\R^d} |f|^p \, m^p, \quad \text{with} \quad \kappa_L \approx \int_{|z| \ge L} k_0(z) \, m^p(z) \, dz \xrightarrow[L \rightarrow +\infty]{} 0.
\end{aligned}
$$

We now handle the third term $\BB_\eps^3$ first using inequality \eqref{eq-DtoF:calculconvexBIS} again:
$$
\begin{aligned}
&\quad \int_{\R^d} \left(\BB_\eps^3 f\right)  (\hbox{sign} f) \, |f|^{p-1} \, m^p \\
&\le {1 \over p} \, \int_{\R^d \times \R^d} k_\eps(x-y) \, \chi_{\eta,L}(x-y) \,\xi_R^c(x,y) \, (|f|^p(y) - |f|^p(x)) \, m^p(x) \, dy \, dx \\
&= {1 \over p} \,  \int_{\R^d \times \R^d} k_\eps(z) \, \chi_{\eta,L}(z) \, \xi_R^c(y+z,y) \, |f|^p(y) \, (m^p(y+z) - m^p(y)) \, dy \, dz. 
\end{aligned}
$$
We then use the Taylor-Lagrange formula which gives us the existence of $\theta \in (0,1)$ such that 
$$
m^p(y+z) = m^p(y) + z \cdot \nabla m^p(y+ \theta z).
$$
Notice that there exists $C_L>0$ depending on $L$ such that $|\nabla m^p(y+ \theta z)| \le C_L \, \langle y \rangle^{pq-1}$ for any $y \in \R^d$, $|z| \le 2L$. We hence obtain 
$$
\begin{aligned}
&\quad \int_{\R^d} \left(\BB_\eps^3 f\right)  (\hbox{sign} f) \, |f|^{p-1} \, m^p  \\
&\le C_L \,  \int_{\R^d \times \R^d} k_\eps(z)\,|z| \, \chi_{\eta,L}(z) \, \xi_R^c(y+z,y) \, |f|^p(y) \, \langle y \rangle^{pq-1} \, dy \, dz \\
&\le C_L \,  \int_{\R^d \times \R^d} k_\eps(z)\,|z| \,  \chi_{\eta,L}(z) \,\chi_R^c(y) \, |f|^p(y) \, {{m^p(y)} \over \langle y \rangle} \, dy \, dz \\
&\le C_L \, \int_{\eta \le |z| \le 2L} k_0(z) \, |z| \, dz \, \int_{|y| \ge 2R} |f|^p(y) \, {{m^p(y)} \over \langle y \rangle} \, dy, 
\end{aligned}
$$
which leads to 
$$
\begin{aligned}
\int_{\R^d} \left(\BB_\eps^3 f\right)  (\hbox{sign} f) \, |f|^{p-1} \, m^p 
\le C_{\eta,L} \, \int_{\R^d} |f|^p(y) \, {{m^p(y)} \over R} \, dy.
\end{aligned}
$$
As a consequence, we obtain
$$
\int_{\R^d} \left(\BB_\eps^3 f\right)(\hbox{sign} f) \, |f|^{p-1} \, m^p  \leq \kappa_R \, C_{\eta,L} \, \int_{\R^d} |f| \, m \,\,\, \text{with} \,\,\, \kappa_R \approx {1 \over R} \xrightarrow[R \rightarrow +\infty]{} 0.
$$
We estimate the term involving $\BB^4_\eps$ using that $\xi_R(x,y)  \geq \chi_R(x)$,
and we get
$$
\int _{\R^d} \left(\BB^4_\eps f\right)  (\hbox{sign} f) \, |f|^{p-1} \, m^p \leq - \int_{2\eta \leq |z| \leq L} k_\eps(z)  \, dz \, \int_{|x| \leq R}  |f|^p\, m^p.
$$

Finally, using integration by parts, we have
$$
\begin{aligned}
&\quad \int_{\R^d} \left(\BB_\eps^5 f\right) (\hbox{sign} f) \, |f|^{p-1} \, m^p \\
&= \int_{\R^d} |f|^p(x) \, m^p(x) \left( d\left(1-{1\over p} \right) - \frac{x \cdot \nabla m^p(x)}{p\, m^p(x)} \right) \, dx. 
\end{aligned}
$$
Gathering all the previous estimates and denoting
$$
\begin{aligned}
\psi_{\eta,L,R}^\eps(x) &:= \kappa_\eta + \kappa_L + \kappa_R \, C_{\eta,L} - \int_{2\eta \leq |z| \leq L} k_\eps(z) \, dz \, \mathds{1}_{|x| \leq R} \\ &\quad- \left( d\left(1-{1\over p} \right) - \frac{x \cdot \nabla m^p(x)}{p\, m^p(x)} \right),
\end{aligned}
$$
we obtain
$$
\int_{\R^d} \left(\BB_\eps f\right) (\hbox{sign} f) \, |f|^{p-1} \, m^p \leq
\int_{\R^d} \psi_{\eta,L,R}^\eps(x) \,|f|^p(x) \, m^p(x) \, dx .
$$

First, since $\varphi_m : x \mapsto d(1-1/p) - {x \cdot \nabla m^p(x)}/{p\, m^p(x)}$ is a continuous function, we can bound it by above by a constant $C_R$ depending on $R$ on $\{ |x| \le R \}$ for any $R>0$. We denote $\ell := d(1-1/p) - q$ which is the limit of $\varphi_m$ as $|x| \rightarrow \infty$. One can also notice that $A^\eps_{\eta,L} :=  \int_{2\eta \leq |z| \leq L} k_\eps(z) \, dz \rightarrow \infty$ as $\eps \rightarrow 0$ and $\eta \rightarrow 0$. We first choose $\eps_1>0$, $\eta \ge \eps_1$, $L\le 1/\eps_1$ and $R>0$, so that we have
$$
|x| \ge R \Rightarrow \varphi_m(x) \le {{a+\ell} \over 2} \quad \text{and} \quad \kappa_\eta + \kappa_L + \kappa_R \, C_{\eta,L} \le {{a-\ell}\over 2}.
$$
Up to make decrease the value of $\eta$, we can then choose $\eps_0<\eps_1$ such that for any $\eps \in [0,\eps_0]$,
$$
\kappa_\eta + \kappa_L + \kappa_R \, C_{\eta,L} + C_R - A^\eps_{\eta,L} \le a.
$$
As a conclusion, for this choice of constants, for any $x \in \R^d$ and $\eps \in [0,\eps_0]$, we have $ \psi_{\eta,L,R}^\eps(x) \le a$, which yields the result. 
\end{proof}

\begin{lem} \label{lem-DtoF:BdissipHs}
Consider $q \in (0,\alpha/2)$. There exists $b \in \R$ such that for any $s \in \N$, $\BB_0 - b$ is hypodissipative in $H^s(m)$.
\end{lem}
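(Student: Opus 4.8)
The plan is to mimic the proof of Lemma~\ref{lem-DtoC:BdissipHs}: settle the case $s=0$ in $L^2(m)$, propagate the estimate to the derivatives by controlling the commutators of $\nabla$ with $\BB_0$, and conclude with a suitably weighted $H^s(m)$ norm. For $s=0$ there is nothing new: since $q\in(0,\alpha/2)$, Lemma~\ref{lem-DtoF:BdissipLp} in the case $p=2$, $\eps=0$ gives, for $b_0:=d/2-q+1$, the dissipativity of $\BB_0-b_0$ in $L^2(m)$, i.e. $\langle\BB_0 f,f\rangle_{L^2(m)}\le b_0\,\|f\|^2_{L^2(m)}$ for every $f\in\DD(\R^d)$. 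We fix once and for all the splitting constants $\eta$ (small, say $\eta\le 1/2$), $L$ and $R$ (finite).

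The heart of the matter is the commutator structure. The crucial point is that $\BB^1_0$ and $\BB^2_0$ are \emph{translation invariant}: writing $y=x-z$ and using that $\chi\equiv1$ on the support of $\chi_\eta$,
\[
\BB^1_0 f(x)=\int_{\R^d}(k_0\chi_\eta)(z)\,\bigl(f(x-z)-f(x)+z\cdot\nabla f(x)\bigr)\,dz,\qquad \BB^2_0 f(x)=\int_{\R^d}(k_0\chi^c_L)(z)\,\bigl(f(x-z)-f(x)\bigr)\,dz,
\]
so both commute with any $\partial^\beta$ and contribute no commutator term. Next, $\BB^4_0$ is multiplication by $V(x):=-\int_{\R^d}k_0(z)\chi_{\eta,L}(z)\,\xi_R(x,x-z)\,dz$, a bounded function, smooth with bounded derivatives of all orders (since $k_0\chi_{\eta,L}\in L^1$ and $\chi_R$ is smooth), hence $[\partial^\beta,\BB^4_0]\in\BBB(H^{|\beta|-1}(m),L^2(m))$. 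For $\BB^3_0$ one uses $\xi^c_R(x,y)=\chi^c_R(x)\chi^c_R(y)$ and $z$-variables: the only $x$-dependence sits in $f(x-z)$, $f(x)$, $\chi^c_R(x)$, $\chi^c_R(x-z)$, and since $\chi_{\eta,L}$ confines $z$ to the \emph{fixed bounded} annulus $\{\eta\le|z|\le 2L\}$, the ratio $m(x)/m(x-z)$ stays bounded there; together with $k_0\chi_{\eta,L}\in L^1$ and $\nabla\chi^c_R\in L^\infty$, the Leibniz rule yields $[\partial^\beta,\BB^3_0]\in\BBB(H^{|\beta|-1}(m),L^2(m))$. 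Finally $\BB^5_0=d+x\cdot\nabla$ gives $[\partial^\beta,\BB^5_0]=|\beta|\,\partial^\beta$. Altogether, for $|\beta|=s$,
\[
\partial^\beta(\BB_0 f)=\BB_0(\partial^\beta f)+s\,\partial^\beta f+\RR_{s,\beta}f,\qquad \RR_{s,\beta}\in\BBB(H^{s-1}(m),L^2(m)).
\]

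Then the energy estimates follow the usual pattern. For $f_t:=S_{\BB_0}(t)f$ and $|\beta|=s$, applying the $s=0$ bound to $\partial^\beta f_t$ and using the identity above together with Young's inequality,
\[
\tfrac12\tfrac{d}{dt}\|\partial^\beta f_t\|^2_{L^2(m)}=\langle\BB_0\partial^\beta f_t,\partial^\beta f_t\rangle_{L^2(m)}+s\|\partial^\beta f_t\|^2_{L^2(m)}+\langle\RR_{s,\beta}f_t,\partial^\beta f_t\rangle_{L^2(m)}\le (b_0+s+1)\|\partial^\beta f_t\|^2_{L^2(m)}+C\,\|f_t\|^2_{H^{s-1}(m)}.
\]
Working with the modified norm $\Nt f\Nt^2_{H^s(m)}:=\sum_{j=0}^s\eta_0^{\,j}\,\|\nabla^j f\|^2_{L^2(m)}$ and choosing $\eta_0>0$ small enough that the lower-order remainders $C\,\|f_t\|^2_{H^{j-1}(m)}$ are absorbed by the diagonal terms, an induction on $s$ gives $\tfrac{d}{dt}\Nt f_t\Nt^2_{H^s(m)}\le 2b\,\Nt f_t\Nt^2_{H^s(m)}$, hence $\BB_0-b$ is hypodissipative in $H^s(m)$ for $b$ large enough (one may take $b\ge d/2-q+s+1$, the linear growth in $s$ coming from the transport commutator $[\partial^\beta,\BB^5_0]$).

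I expect the commutator bookkeeping of the second step to be the only real difficulty — specifically the boundedness $[\partial^\beta,\BB^3_0]\colon H^{s-1}(m)\to L^2(m)$, which hinges on the jump part of the fractional Laplacian having been localized, through $\chi_{\eta,L}$, to a fixed bounded range of $z$, so that the polynomial weight $m$ varies by at most a bounded factor across a jump. Everything else is routine, and the genuinely singular small-scale part $\BB^1_0$ of the fractional Laplacian causes no trouble at all thanks to its translation invariance.
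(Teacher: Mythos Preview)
Your argument is correct, but it takes a genuinely different route from the paper's and lands on a slightly weaker conclusion. The paper works with the coarser decomposition $\BB_0=\Lambda_0-\AA_0$: since the full fractional Laplacian $I_0=-(-\Delta)^{\alpha/2}$ commutes with $\partial_x$, the only commutator comes from the transport term ($[\partial_x,\hbox{div}(x\cdot)]=\partial_x$) and from $\AA_0$. More importantly, the paper does \emph{not} discard the dissipative part of $I_0$: in the $L^2(m)$ estimate it retains a negative term $-\tfrac{c_0}{4}\|fm\|^2_{\dot H^{\alpha/2}}$, and likewise $-\tfrac{c_0}{8}\|fm\|^2_{\dot H^{1+\alpha/2}}$ at the level of $\partial_x f$. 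The transport commutator then produces a $\|fm\|^2_{\dot H^1}$ term, which is absorbed via the interpolation $\|h\|^2_{\dot H^1}\le K(\zeta)\|h\|^2_{\dot H^{\alpha/2}}+\zeta\|h\|^2_{\dot H^{1+\alpha/2}}$ into those negative terms. The upshot is that the final bound reads $\tfrac12\tfrac{d}{dt}\Nt f_t\Nt^2_{H^s(m)}\le (b_0+\eta\cdot C_s)\|f_t\|^2_{L^2(m)}$, and since the norm parameter $\eta$ can be taken as small as we like, \emph{any} fixed $b>b_0$ works for every $s$---matching the order of quantifiers in the statement (``there exists $b$ such that for all $s$'').

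Your approach instead breaks $\BB_0$ into the five pieces $\BB_0^1,\dots,\BB_0^5$ and checks commutators termwise; this is a bit more bookkeeping (though your observation that $\BB_0^1$ and $\BB_0^2$ are convolution operators is clean), and because you only invoke Lemma~\ref{lem-DtoF:BdissipLp} for the $s=0$ input you throw away the negative $\dot H^{\alpha/2}$ term. The transport commutator $s\,\partial^\beta f$ then has nowhere to go, and you end up with $b\ge b_0+s+1$ growing linearly in $s$, as you note yourself. This still proves ``for every $s$ there exists $b_s$'', which is all that is actually used downstream (Lemma~\ref{lem-DtoF:BdissipXi} only needs one fixed $s_0$), but it does not literally establish the lemma as stated. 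If you want to match the stated quantifier order, you would need to keep the fractional dissipation and run the interpolation argument as the paper does.
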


\begin{proof}
\noindent {\it Step 1.} We first treat the case $s=0$. We write $\BB_0 = \Lambda_0 - \AA_0$ and we compute 
$$
\begin{aligned}
\int_{\R^d}  (\BB_0 f) \, f \, m^2 &= \int_{\R^d} (\Lambda_0 f) \, f \, m^2 - \int_{\R^d} (\AA_0 f) \, f \, m^2 \\
&= \int_{\R^d} I_0(f) \, f \, m^2 + \int_{\R^d} \hbox{div}(xf) \, f \, m^2 - \int_{\R^d} (\AA_0 f) \, f \, m^2 \\
&=: T_1 + T_2+ T_3.
\end{aligned}
$$
Concerning $T_1$, we have 
$$
\begin{aligned}
&T_1=\\& \int_{\R^d \times \R^d} k_0(x-y) \, (f(y) - f(x) - \chi(x-y) \, (y-x) \cdot \nabla f(x)) f(x) \, m^2(x) \, dy \, dx \\
&= -{1 \over 2}  \int_{\R^d \times \R^d} k_0(x-y) \, (f(y) - f(x))^2 \, dy \, m^2(x) \, dx + {1 \over 2} \int_{\R^d} f^2 \, I_0(m^2).
\end{aligned}
$$
Since one can prove that $I_0(m^2)/m^2$ goes to $0$ at infinity (cf Lemma 5.1 from \cite{FFP-IT}) and is thus bounded in $\R^d$, we can deduce that there exists $C \in \R_+$ such that 
$$
T_1 \le -{1 \over 2}  \int_{\R^d \times \R^d} k_0(x-y) \, (f(y) - f(x))^2 \, dy \, m^2(x) \, dx + C \int_{\R^d} f^2 \, m^2.
$$
We observe that 
$$
\begin{aligned}
&  -{1 \over 2}  \int_{\R^d \times \R^d} k_0(x-y) \, (f(y) - f(x))^2 \, dy \, m^2(x) \, dx \\
&\quad\le -{1 \over 4}  \int_{\R^d \times \R^d} k_0(x-y) \, ((fm)(y) - (fm)(x))^2 \, dy  \, dx \\
&\qquad + {1 \over 2} \int_{\R^d \times \R^d} k_0(x-y) \, (m(y) - m(x))^2 \, dx \, f^2(y) \, dy.
\end{aligned}
$$
We split the last term into two pieces, that we estimate in the following way: 
$$
\begin{aligned}
&\quad \int_{|x-y| \le 1} k_0(x-y) \, (m(y) - m(x))^2 \, dx \, f^2(y) \, dy \\
&\le  \int_0^1 \int_{|x-y| \le 1} k_0(x-y) \, |x-y|^2 \, |\nabla m(x+ \theta (y-x))|^2 \, dx \, f^2(y) \, dy d\theta \\
&\le C \, \int_{\R^d} f^2 \, m^2
\end{aligned}
$$
and 
$$
\begin{aligned}
& \int_{|x-y| \ge 1} k_0(x-y) \, (m(y) - m(x))^2 \, dx \, f^2(y) \, dy \\
&\quad\le C \, \int_{|x-y| \ge 1} k_0(x-y) \, (m^2(y) + m^2(y) \, m^2(x-y)) \, dx \, f^2(y) \, dy \\
&\quad\le C \, \int_{|z| \ge 1} k_0(z) \, m^p(z) \, dz \int_{\R^d} f^2 \, m^2  \le  C \, \int_{\R^d} f^2 \, m^2.
\end{aligned}
$$
We recall that the homogeneous Sobolev space $\dot{H}^s$ for $s\in\R$ is the set of tempered distributions $u$ such that $\widehat{u}$ belongs to $L^1_{loc}$ and 
$$
\|u\|^2_{\dot{H}^s} := \int_{\R^d} |\xi|^{2s} \, |\widehat{u}(\xi)|^2 \, d\xi < \infty, 
$$
and that for $s \in (0,1)$, there exists a constant $c_0>0$ such that 
$$
\|u\|^2_{\dot{H}^s} = c_0^{-1} \, \int_{\R^d \times \R^d} {{(u(x)-u(y))^2} \over {|x-y|^{d+2s}}} \, dx \, dy
$$
from which we deduce the following   identity:
\beqn \label{eq-DtoF:fractionalSobolev}
c_0 \, \|u\|^2_{\dot{H}^{\alpha/2}} = \int_{\R^d \times \R^d} (u(x)-u(y))^2 \, k_0(x-y) \, dx \, dy \quad \forall \, \alpha \in (0,2).
\eeqn 
As a consequence, up to change the value of $C$, we have proved
$$
T_1 \le -{c_0 \over 4}Â \| f \, m\|_{\dot{H}^{\alpha/2}}^2 + C\int_{\R^d} f^2 \, m^2.
$$
Next, we compute
$$
T_2 = \int_{\R^d} f^2 \, m^2 \, \left({d \over 2} - \frac{x \cdot \nabla m^2}{2 \, m^2} \right) \le {d \over 2} \int_{\R^d} f^2 \, m^2.
$$
Concerning $T_3$, we use Lemma \ref{lem-DtoF:Abounded} and Cauchy-Schwarz inequality:
$$
T_3 \le \|\AA_0 f\|_{L^2(m)} \, \|f\|_{L^2(m)} \le C \, \|f\|^2_{L^2(m)}.
$$
As a consequence, gathering the three previous inequalities, we have 
$$
\int_{\R^d}  (\BB_0 f) \, f \, m^2 \le -{c_0 \over 4}Â \| f \, m\|_{\dot{H}^{\alpha/2}}^2 + b_0 \int_{\R^d} f^2 \, m^2, \quad b_0 \in \R.
$$

\noindent {\it Step 2.} We now consider $b>b_0$ and we prove that for any $s \in \N$, $\BB_0 - b$ is hypodissipative in $H^s(m)$. For $s \in \N^*$, we introduce the  norm 
\beqn\label{eq-DtoC:Nt}
\Nt f \Nt^2_{H^s(m)} = \sum_{j=0}^s \eta^j \, \|Â \partial_x^j f \|^2_{L^2(m)}, \quad \eta>0,
\eeqn
which is equivalent to the classical $H^s(m)$ norm. 
We use again the fact that $\BB_0 = \Lambda_0 - \AA_0$ and we only deal with the case $s=1$, the higher order derivatives being treated in the same way. First, we have
$$
\partial_x(\BB_0 f) = \Lambda_0(\partial_x f) + \partial_x f - \partial_x (\AA_0 f).
$$
Then, we can notice that 
$$
\AA_0 f (x) = \int_{\R^d} k_0(z) \, \chi_{\eta,L}(z) \, \xi_R(x,x+z) \, f(x+z) \, dz
$$
so that 
$$
\partial_x (\AA_0 f)(x) = \AA_0 (\partial_x f) (x) + \widetilde{\AA_0} f(x), \quad \text{with} \quad \|\widetilde{\AA_0} f\|_{L^2(m)} \le C \, \|f\|_{L^2},
$$
where the last inequality is obtained thanks to inequality \eqref{eq-DtoF:ineqcaract} as in the proof of Lemma \ref{lem-DtoF:Abounded}. We deduce that 
$$
\partial_x(\BB_0 f) = \BB_0 (\partial_x f) + \partial_x f - \widetilde{\AA_0} f.
$$
Then, doing the same computations as in the case $s=0$, we obtain
$$
\begin{aligned}
&\quad \int_{\R^d} \partial_x(\BB_0 f) \, (\partial_x f) \, m^2 \\
&= \int_{\R^d} \BB_0 (\partial_x f)  \, (\partial_x f) \, m^2 + \int_{\R^d} (\partial_x f)^2 \, m^2 - \int_{\R^d} \widetilde{\AA_0} f \, (\partial_x f) \, m^2 \\
&=:J_1+J_2+J_3.
\end{aligned}
$$
with
$$
\begin{aligned}
J_1
&\le -{c_0 \over 4} \| (\partial_x f) m\|_{\dot{H}^{\alpha/2}}^2 + b_0 \int_{\R^d} (\partial_x f)^2 \, m^2 \\
&\le -{c_0 \over 8} \| f \, m\|^2_{\dot{H}^{1+\alpha/2}} + {c_0 \over 4} \|f \, \partial_x m\|^2_{\dot{H}^{\alpha/2}}  + b_0 \int_{\R^d} (\partial_x f)^2 \, m^2 \\
&\le -{c_0 \over 8} \| f \, m\|^2_{\dot{H}^{1+\alpha/2}} + C \left( \|f\|^2_{L^2(m)} + \|f \, m\|^2_{\dot{H}^1} \right), 
\end{aligned}
$$
and also
$$
J_2 \le {1 \over 2} \left(\|f\|^2_{L^2(m)} + \|f \, m\|^2_{\dot{H}^1} \right).
$$ 
Finally, using Cauchy-Schwarz inequality, we have
$$
J_3 \le \| \widetilde{\AA_0} f \|_{L^2(m)} \, \|\partial_x f \|_{L^2(m)} \le C \left(\|f\|^2_{L^2(m)} + \|f \, m\|^2_{\dot{H}^1} \right).
$$
As a consequence, we have
$$
\begin{aligned}
&\quad \int_{\R^d} \partial_x(\BB_0 f) \, (\partial_x f) \, m^2 \\
&\le -{c_0 \over 8} \| f \, m\|^2_{\dot{H}^{1+\alpha/2}} + b_1 \left(\|f\|^2_{L^2(m)} + \|f \, m\|^2_{\dot{H}^1} \right), \quad b_1 \in \R.
\end{aligned}
$$
We now introduce $f_t$ the solution to the evolution equation
$$
\partial_t f_t = \BB_0 f_t, \quad f_0=f, 
$$
and we compute
$$
\begin{aligned}
{1 \over 2} {d \over {dt}} \Nt f_t \Nt^2_{H^1(m)}
&= \int_{\R^d} (\BB_0 f_t) \, f_t \, m^2 + \eta \, \int_{\R^d}  \partial_x(\BB_0 f_t) \, (\partial_x f_t) \, m^2 \\
&\le -{c_0 \over 4}Â \| f_t \, m\|_{\dot{H}^{\alpha/2}}^2 -\eta {c_0 \over 8} \| f _t\, m\|^2_{\dot{H}^{1+\alpha/2}} \\
&\quad + \|f_t \|^2_{L^2(m)} (b_0 + \eta \,b_1) + \eta \, b_1\,\|f_t\, m\|^2_{\dot{H}^1}.
\end{aligned}
$$
We now use the following interpolation inequality
$$
\|h\|_{\dot{H}^1} \le \|h\|^{\alpha/2}_{\dot{H}^{\alpha/2}} \, \|h\|^{1-\alpha/2}_{\dot{H}^{1+\alpha/2}},
$$
which implies
\beqn \label{eq-DtoF:interp}
\|h\|^2_{\dot{H}^1} \le K(\zeta) \, \|h\|^2_{\dot{H}^{\alpha/2}} + \zeta \, \|h\|^2_{\dot{H}^{1+\alpha/2}}, \quad \zeta>0.
\eeqn
We obtain  
$$
\begin{aligned}
&\quad {1 \over 2} {d \over {dt}} \Nt f_t \Nt^2_{H^1(m)} \\
&\le \left(-{c_0 \over 4}Â + \eta \, b_1 \, K(\zeta)\right)\, \| f_t \, m\|_{\dot{H}^{\alpha/2}}^2 + \eta \, \left(-{c_0 \over 8} + \zeta  \, b_1\right) \, \| f _t\, m\|^2_{\dot{H}^{1+\alpha/2}} \\
&\quad + \|f_t \|^2_{L^2(m)} (b_0 + \eta \,b_1).
\end{aligned}
$$
Choosing $\zeta$ small enough so that $-c_0/8 + \zeta \, b_1 <0$ and then $\eta$ small enough so that $-c_0/4 + \eta \, b_1 \, K(\zeta)<0$ and $b_0 + \eta \,b_1<b$, we get
$$
{1 \over 2} {d \over {dt}} \Nt f_t \Nt^2_{H^1(m)} \le b \, \Nt f_t \Nt^2_{H^1(m)}
$$
which concludes the proof in the case $s=1$. 
\end{proof}

\smallskip
We now introduce the operator $\BB_{0,m}$ defined by 
\beqn \label{eq-DtoF:B0m}
\BB_{0,m}(h) = m \, \BB_0(m^{-1}h).
\eeqn
\begin{cor} \label{cor-DtoF:BdissipHs}
Consider $q$ such that $2q<\alpha$. There exists $b \in \R$ such that for any $s \in \N$, $\BB_{0,m} - b$ is hypodissipative in $H^s$.
\end{cor}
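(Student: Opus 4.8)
The plan is to deduce the corollary directly from Lemma~\ref{lem-DtoF:BdissipHs} by conjugating $\BB_0$ with the multiplication by $m$. First I would record the elementary intertwining: if $f_t$ solves $\partial_t f_t = \BB_0 f_t$, then, by the very definition \eqref{eq-DtoF:B0m} of $\BB_{0,m}$, the function $g_t := m \, f_t$ solves $\partial_t g_t = \BB_{0,m} g_t$. Hence $S_{\BB_{0,m}}(t) = m \, S_{\BB_0}(t) \, m^{-1}$ as operators (where $m$ and $m^{-1}$ denote the associated multiplication operators), and in particular $\BB_{0,m}$ generates a semigroup on $H^s$ whenever $\BB_0$ does on $H^s(m)$.

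The second step is to check that the multiplication by $m$ is a bounded isomorphism from $H^s(m)$ onto $H^s$, with bounded inverse the multiplication by $m^{-1}$. This rests on the pointwise bounds $|\partial_x^j m| \le C_j \, \langle x \rangle^{q-j} \le C_j \, m$ and $|\partial_x^j m^{-1}| \le C_j \, \langle x \rangle^{-q-j} \le C_j \, m^{-1}$, valid for every $j \in \N$, together with the Leibniz rule: expanding $\partial_x^j(m f)$ (resp.\ $\partial_x^j(m^{-1} g)$) and using these bounds yields $\| m f \|_{H^s} \approx \| f \|_{H^s(m)}$. Consequently the modified norm $\Nt \cdot \Nt_{H^s(m)}$ of \eqref{eq-DtoC:Nt} used in Lemma~\ref{lem-DtoF:BdissipHs} transports to a norm on $H^s$ which is equivalent to the standard one; explicitly, I would set $\Nt g \Nt^2_{H^s} := \Nt m^{-1} g \Nt^2_{H^s(m)}$ for $g \in H^s$.

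The final step is then immediate: fixing $b \in \R$ as provided by Lemma~\ref{lem-DtoF:BdissipHs} (the same $b$ works for all $s \in \N$), for $g \in H^s$ we write $g_t := S_{\BB_{0,m}}(t) g = m \, f_t$ with $f_t := S_{\BB_0}(t)(m^{-1} g)$ solving $\partial_t f_t = \BB_0 f_t$, and compute
$$
{1 \over 2} {d \over dt} \Nt g_t \Nt^2_{H^s} = {1 \over 2} {d \over dt} \Nt f_t \Nt^2_{H^s(m)} \le b \, \Nt f_t \Nt^2_{H^s(m)} = b \, \Nt g_t \Nt^2_{H^s},
$$
which is exactly the hypodissipativity of $\BB_{0,m} - b$ in $H^s$. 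There is essentially no obstacle: the only point deserving attention is the norm equivalence of Step~2, which is a routine consequence of the growth of the derivatives of $m^{\pm 1}$; everything else is bookkeeping around the conjugation identity $S_{\BB_{0,m}}(t) = m \, S_{\BB_0}(t) \, m^{-1}$.
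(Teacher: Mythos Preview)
Your proposal is correct and follows essentially the same approach as the paper: the paper's proof simply observes that the norms $\|f\|_1^2 = \sum_{j=0}^s \|\partial_x^j f\|_{L^2(m)}^2$ and $\|f\|_2^2 := \|f\,m\|_{H^s}^2$ on $H^s(m)$ are equivalent, which is exactly your Step~2, and then the conjugation $\BB_{0,m} = m\,\BB_0\,m^{-1}$ transfers the hypodissipativity of Lemma~\ref{lem-DtoF:BdissipHs} from $H^s(m)$ to $H^s$, which is your Steps~1 and~3. Your write-up is in fact more detailed than the paper's (which merely states that the result is immediate from the norm equivalence).
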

\begin{proof}
The proof comes from Lemma \ref{lem-DtoF:BdissipHs} and is immediate noticing that the norms defined on $H^s(m)$ by
$$
\|f\|_1^2 = \sum_{j=0}^s \|\partial^j_x f\|^2_{L^2(m)} \quad \text{and} \quad 
\| f \|_2^2 := \|f \, m\|_{H^s}^2
$$
are equivalent.
\end{proof}

\smallskip

\begin{lem} \label{lem-DtoF:BdissipH-s}
Consider $q$ such that $2q<\alpha$. There exists $b \in \R$ such that for any $s \in \N$, $\BB_{0,m} - b$ is hypodissipative in $H^{-s}$, (or equivalently, $\BB_0 - b$ is hypodissipative in $H^{-s}(m)$).
\end{lem}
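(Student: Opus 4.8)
The strategy is to argue by duality, reducing the estimate in the negative-order Sobolev spaces to the hypodissipativity already proved in positive-order spaces (Lemma~\ref{lem-DtoF:BdissipHs} and Corollary~\ref{cor-DtoF:BdissipHs}) applied to the formal adjoint operator.

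First I would identify that adjoint. Writing $\BB_0 = I_0 + \diver(x\,\cdot) - \AA_0$, with $I_0 = -(-\Delta)^{\alpha/2}$ a real Fourier multiplier and $\AA_0$ the integral operator in \eqref{eq-DtoF:splittingIeps} whose kernel $k_0(x-y)\,\chi_{\eta,L}(x-y)\,\xi_R(x,y)$ is symmetric in $(x,y)$ (because $k_0$ and $\chi_{\eta,L}$ are even and $\xi_R(x,y)=\xi_R(y,x)$), both $I_0$ and $\AA_0$ are $L^2$-self-adjoint, so that the formal $L^2$-adjoint of $\BB_0$ is
\[
\BB_0^* \;=\; I_0 - x\cdot\nabla - \AA_0 \;=\; \BB_0 - d\,\mathrm{Id} - 2\,x\cdot\nabla .
\]
Thus $\BB_0^*$ has the very same structure as $\BB_0$: the self-adjoint nonlocal diffusion $I_0$, a first-order transport term, and the compactly supported bounded integral operator $\AA_0$ of Lemma~\ref{lem-DtoF:Abounded}; only the sign of the transport term has changed, which affects the energy estimates by a bounded term only. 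Correspondingly $\BB_{0,m}^* h = m^{-1}\BB_0^*(m h)$, i.e. the conjugate of $\BB_0^*$ by $m$, which again has the structure [conjugated nonlocal diffusion] $+$ [transport $-x\cdot\nabla$] $+$ [bounded zeroth-order term] $+$ [bounded integral operator].

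Next I would use the duality identification $(H^{-s}(m))' = H^s(m^{-1})$ under the pairing $\langle f,\phi\rangle = \langle f m,\phi m^{-1}\rangle_{L^2}$, which turns the desired semigroup bound for $\BB_0 - b$ on $H^{-s}(m)$ into the same bound for $\BB_0^* - b$ on $H^s(m^{-1})$ (equivalently: $\BB_{0,m}-b$ hypodissipative on $H^{-s}$ iff $\BB_{0,m}^*-b$ hypodissipative on $H^s$), in the spirit of \cite{GMM,FFP-IT}. It then suffices to show that there is $b\in\R$ such that $\BB_0^* - b$ is hypodissipative in $H^s(m^{-1})$ for every $s\in\N$, and this is obtained by repeating the computation of Lemma~\ref{lem-DtoF:BdissipHs} with the decaying weight $m^{-1}=\langle x\rangle^{-q}$ replacing $m$. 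For $s=0$: the usual symmetrization of $\int_{\R^d} I_0(h)\,h\,m^{-2}$ gives $-\tfrac{c_0}{4}\,\|h\,m^{-1}\|_{\dot{H}^{\alpha/2}}^2 + C\,\|h\|_{L^2(m^{-1})}^2$ provided $\langle x\rangle^{2q}\,I_0(\langle x\rangle^{-2q})$ is bounded; the transport term $-x\cdot\nabla$ yields, after an integration by parts, $\tfrac12\int_{\R^d} h^2\bigl(d\,m^{-2} + x\cdot\nabla m^{-2}\bigr)\le \tfrac{d}{2}\|h\|_{L^2(m^{-1})}^2$ since $x\cdot\nabla m^{-2}\le 0$; and the $\AA_0$-term is absorbed by Cauchy--Schwarz and Lemma~\ref{lem-DtoF:Abounded}. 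For $s\ge1$ one differentiates and argues with the equivalent norm \eqref{eq-DtoC:Nt} and the interpolation inequality \eqref{eq-DtoF:interp} exactly as in Lemma~\ref{lem-DtoF:BdissipHs}, using that $I_0$ commutes with $\partial_x^j$, that commuting $\partial_x^j$ with $x\cdot\nabla$ produces only terms of the same order (hence bounded in the energy), and that commuting $\partial_x^j$ with $\AA_0$ produces again a bounded integral operator by \eqref{eq-DtoF:ineqcaract}.

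The only genuinely new point, and the main obstacle, is the boundedness of $\langle x\rangle^{2q}\,I_0(\langle x\rangle^{-2q})$, i.e. the fact that conjugating the fractional diffusion by the \emph{decaying} weight $m^{-1}$ still produces the coercive $\dot{H}^{\alpha/2}$-term. This is where the hypothesis $2q<\alpha$ enters: splitting the integral defining $I_0(\langle x\rangle^{-2q})(x)$ at $|x-y|=1$, the inner part is $\OO(\langle x\rangle^{-2q-2})$ by a second-order Taylor expansion (recall $\alpha<2$), while the outer part is controlled by $\langle x\rangle^{-2q}\int_{|z|\ge1}|z|^{-d-\alpha}\langle z\rangle^{2q}\,dz$, which is finite precisely when $2q<\alpha$; this is the analogue for the weight $m^{-1}$ of the estimate on $I_0(m^2)/m^2$ used in Lemma~5.1 of \cite{FFP-IT} and in Lemma~\ref{lem-DtoF:BdissipHs}. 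Once this is in hand, the remaining steps are a transcription of arguments already carried out, and the conclusion for $H^{-s}$ (equivalently $H^{-s}(m)$) follows by the duality of paragraph three.
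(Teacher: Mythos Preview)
Your approach is correct and is essentially the one taken in the paper: reduce to proving hypodissipativity of the formal adjoint $\BB_{0,m}^*$ in $H^s$, establish the $L^2$ case by extracting the coercive term $-\tfrac{c_0}{4}\|\phi\|_{\dot H^{\alpha/2}}^2$ from the nonlocal diffusion while controlling the cross terms produced by the weight (this is where $2q<\alpha$ enters), and then propagate to $s\ge 1$ via the modified $H^s$ norm \eqref{eq-DtoC:Nt} together with the interpolation \eqref{eq-DtoF:interp}.

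The only notable difference is organisational. You work with $\BB_0^*$ in the weighted spaces $H^s(m^{-1})$ and mimic Lemma~\ref{lem-DtoF:BdissipHs} verbatim, so your cross terms come from $I_0(m^{-2})/m^{-2}$ and from $(m^{-1}(y)-m^{-1}(x))^2$; the paper instead works with the conjugated operator $\BB_{0,m}^*$ in unweighted $H^s$ and expands $I_0(m\phi)$ via a product-type identity (cf.\ \eqref{eq-DtoF:I(phim)}), so the cross terms appear as integrals involving $m(y)-m(x)$ and $\TT_m(x,y)$, treated separately for $\alpha<1$ and $\alpha\in[1,2)$. After the change of variable $h=m\phi$ the two computations coincide term by term; in particular your key bound on $\langle x\rangle^{2q}I_0(\langle x\rangle^{-2q})$ (via Peetre's inequality for the outer part, Taylor for the inner part) plays exactly the role of the paper's estimates on $T_{12},T_{13},T_{14}$. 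Your route has the advantage of reusing Lemma~\ref{lem-DtoF:BdissipHs} almost verbatim, whereas the paper's decomposition makes the individual error terms (e.g.\ the extra $T_{14}$ needed when $\alpha\ge 1$) more explicit; both are valid.
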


\begin{proof}
We introduce the dual operator of $\BB_{0,m}$ defined by:
$$
\BB_{0,m}^* \phi = \omega \, I_0(m \, \phi) - x \cdot \nabla \phi - \frac{x \cdot \nabla m}{m} \, \phi - \omega \, \AA_0( m \, \phi)
$$
where $\omega := m^{-1}$.
We now want to prove that $\BB_{0,m}^*$ is hypodissipative in $H^s$. 

\noindent {\it Step 1.} We consider first the case $s=0$ and we compute
$$
\begin{aligned}
&\quad \int_{\R^d} (\BB_{0,m}^* \phi) \, \phi \\
&= \int_{\R^d} I_0(m \, \phi) \, \omega \, \phi  - \int_{\R^d} x \cdot (\nabla \phi) \, \phi - \int_{\R^d} \frac{x \cdot \nabla m}{m} \, \phi^2 - \int_{\R^d} \omega \, \AA_0(m \, \phi) \, \phi \\
&=: T_1 + \dots + T_4.
\end{aligned}
$$
We have 
$$
T_2 = {d \over 2} \int_{\R^d} \phi^2 \quad \text{and} \quad T_3 \le0.
$$
Next, using \eqref{eq-DtoF:ineqcaract}, we have $\|\AA_0 (m \, \phi) \|_{L^2} \le C \, \|\AA_0 (|\phi|)\|_{L^2}$ and thus
$$
T_4 \le C  \left(\|Â \AA_0 (|\phi|)\|^2 + \| \phi\|^2_{L^2} \right) \le C \, \|Â \phi\|^2_{L^2}
$$
from Lemma \ref{lem-DtoF:Abounded}. Let us now estimate $T_1$. 

\noindent {\it Case $\alpha<1$.} We write 
$$
\begin{aligned}
T_1 &= \int_{\R^d \times \R^d} k_0(x-y) \, ((m\phi)(y) - (m \phi)(x)) \, \omega(x) \, \phi(x) \, dy \, dx \\
&= \int_{\R^d \times \R^d} k_0(x-y) \, (\phi(y) - \phi(x)) \, \phi(x) \, dy \, dx \\
&\quad +\int_{|x-y|Â \le 1} k_0(x-y) \, (m(y) - m(x)) \, \omega(x) \, \phi(y) \, \phi(x) \, dy \, dx \\
&\quad +\int_{|x-y|Â \ge 1} k_0(x-y) \, (m(y) - m(x)) \, \omega(x) \, \phi(y) \, \phi(x) \, dy \, dx \\
&=: T_{11} + T_{12} + T_{13}.
\end{aligned}
$$
Let us point out here that from \eqref{eq-DtoF:fractionalSobolev}, we have
$$
\begin{aligned}
T_{11} &=  \int_{\R^d} I_0(\phi) \, \phi \\
&= -{1 \over 2}  \int_{\R^d \times \R^d} k_0(x-y) \, (\phi(y) - \phi(x))^2 \, dy \, dx + {1 \over 2} \int_{\R^d} I_0(\phi^2) \\
&= -{c_0 \over 2} \| \phi\|^2_{\dot{H}^{\alpha /2}}.
\end{aligned}
$$
Next, using a Taylor expansion, there exists $\theta \in (0,1)$ such that 
\beqn \label{eq-DtoF:T12}
\begin{aligned}
&T_{12} = \int_{|x-y|\le 1} k_0(x-y) \, (m(y) - m(x)) \, \omega(x) \, \phi(y) \, \phi(x) \, dy \, dx \\
&\le C \, \int_{|x-y|\le 1} k_0(x-y) \, |x-y| \, |\nabla m(x + \theta(y-x))| \, \omega(x) \, (\phi^2(y) + \phi^2(x)) \, dy \, dx.
\end{aligned}
\eeqn
Using that  $|\nabla m(x + \theta(y-x))| \, \omega(x) \le C$ for any $x$, $y \in \R^d$, $|x-y|\le1$, we deduce 
$$
T_{12} \le C \, \int_{\R^d} \phi^2.
$$
Concerning $T_{13}$, we have from \eqref{eq-DtoF:calculm}
$$
|m(y)-m(x)| \le  C \, \langle x-y \rangle^q \,\min\left(\langle x \rangle^{q/2}, \langle y \rangle^{q/2}\right),
$$
from which we deduce   
$$
T_{13}\le C \, \int_{\R^d} \phi^2.
$$
All together, we have thus proved 
$$
T_1 \le -{c_0 \over 2} \| \phi\|^2_{\dot{H}^{\alpha /2}} + C \, \int_{\R^d} \phi^2.
$$
\noindent {\it Case $\alpha \in [1,2)$.} We write
$$
\begin{aligned}
 &T_1 = 
 \int_{\R^d \times \R^d} k_0(x-y) \, \TT_{m\phi}(x,y) \, \omega(x) \, \phi(x) \, dy \, dx \\
&= \int_{\R^d} I_0(\phi) \, \phi + 
\int_{|x-y|\le 1} k_0(x-y) \, \TT_m(x,y) \, \omega(x) \, \phi(y) \, \phi(x) \, dy \, dx \\
&\quad + \int_{|x-y|\ge 1} k_0(x-y) \,  \TT_m(x,y) \,  \omega(x) \, \phi(y) \, \phi(x) \, dy \, dx \\
&\quad + \int_{\R^d \times \R^d} k_0(x-y) \, (\phi(y) - \phi(x)) \, \phi(x) \, \omega(x) \, \nabla m(x) \cdot (y-x) \, \chi(y-x) \, dy \, dx \\
&=:T_{11} + T_{12} + T_{13} + T_{14}
\end{aligned}
$$
where we recall that $\TT_\nu$ is defined in~\eqref{eq:psinu}. 
We have again
$$
T_{11} = -{c_0 \over 2} \|Â \phi\|^2_{\dot{H}^{\alpha/2}}.
$$
Arguing similarly as for $T_{12}$ in \eqref{eq-DtoF:T12}, but using a Taylor expansion at order~$2$ instead of order $1$, we obtain
$$
T_{12} \le C \, \int_{\R^d} \phi^2.
$$
Next, we split $T_{13}$ into two parts:
$$
\begin{aligned}
T_{13} &\le C \,\int_{|x-y|Â \ge 1} k_0(x-y) \, |m(y) - m(x)| \, \omega(x) (\phi^2(x) + \phi^2(y)) \, dx \, dy \\
&\quad +  C \,\int_{1 \le |x-y|Â \le 2} k_0(x-y) \, |x-y| \, |\nabla m(x)| \, \omega(x) \, (\phi^2(x) + \phi^2(y)) \, dx \, dy \\
&\le C \, \int_{|x-y|Â \ge 1} k_0(x-y) \, \langle x-y \rangle^q \, \langle x \rangle^{-q/2} \, (\phi^2(x) + \phi^2(y)) \, dx \, dy \\
&\quad +  C \int_{1 \le |x-y|Â \le 2} k_0(x-y) \,  (\phi^2(x) + \phi^2(y)) \, dx \, dy,
\end{aligned}
$$
where we have used \eqref{eq-DtoF:calculm}, we thus obtain:
$$
T_{13} \le C \, \int_{\R^d} \phi^2.
$$
Concerning $T_{14}$, we use Young inequality which implies that for any $\zeta>0$,
$$
\begin{aligned}
T_{14}
&\le \zeta \, \int_{\R^d \times \R^d} k_0(x-y) \, (\phi(y) - \phi(x))^2 \, dy \, dx \\
&\quad + K(\zeta) \, \int_{\R^d \times \R^d} k_0(x-y) \, \phi^2(x) \, {|\nabla m(x)|^2 \over m^2(x)} \, |y-x|^2 \, \chi^2(x-y) \, dy \, dx \\
&\le \zeta \, c_0 \, \|\phi\|^2_{\dot{H}^{\alpha/2}} + K(\zeta) \, \int_{|z| \le 2} k(z) \, |z|^2 \, dz \, \int_{\R^d} \phi^2.
\end{aligned}
$$
Consequently, taking $\zeta>0$ small enough, we have 
$$
T_1 \le -{c_0 \over 4} \|\phi\|^2_{\dot{H}^{\alpha/2}} + C \, \int_{\R^d} \phi^2.
$$
We hence conclude that 
$$
\int_{\R^d} (\BB_{0,m}^* \phi) \, \phi \le -{c_0 \over 4} \|\phi\|^2_{\dot{H}^{\alpha/2}} + b_0 \, \int_{\R^d} \phi^2, \quad b_0 \in \R.
$$

\noindent {\it Step 2.} We now consider $b>b_0$ and we prove that for any $s \in \N$, $\BB_{0,m}^* - b$ is hypodissipative in~$H^s$. As in \eqref{eq-DtoC:Nt}, for $s \in \N^*$, we introduce the norm 
$$
\Nt \phi \Nt^2_{H^s} := \sum_{j=0}^s \eta^j \, \|Â \partial^j_x \phi \|^2_{L^2}, \quad \eta>0, 
$$
which is equivalent to the classical $H^s$ norm. 
We only deal with the case $s=1$, the higher order derivatives are treated in the same way. First, using the identity \eqref{eq-DtoF:I(phim)} (with $k_0$ instead of $k_{0,\eps}$), we notice that 
$$
\BB_{0,m}^* \phi = I_0(\phi) + \omega \, \CC^1_m(\phi) +\omega \, \CC^2_m(\phi) - x \cdot \nabla \phi - \frac{x \cdot \nabla m}{m} \, \phi - \omega \, \AA_0( m \, \phi)
$$
where
$$
\begin{aligned}
\CC^1_m(\phi)(x) &= \int_{\R^d} k_0(x-y) \, \phi(y) \, (m(y)-m(x) - (y-x) \cdot \nabla m(x) \, \chi(x-y)) \, dy \\
&= \int_{\R^d} k_0(z) \, \phi(x+z) \, (m(x+z)-m(x) - z \cdot \nabla m(x) \, \chi(z)) \, dz 
\end{aligned}
$$
and
$$
\begin{aligned}
\CC^2_m(\phi)(x) &= \int_{\R^d} k_0(x-y) \, (\phi(y) - \phi(x)) \, \nabla m(x) \cdot (y-x) \, \chi(x-y) \, dy \\
&= \int_{\R^d} k_0(z) \, (\phi(x+z) - \phi(x)) \, \nabla m(x) \cdot z \, \chi(z) \, dz.
\end{aligned}
$$
Before going into the computation of $\partial_x (\BB_{0,m}^* \phi)$, we also notice that 
$$
\partial_x \left(\omega \, \AA_0( m \, \phi)\right) = \omega \, \AA_0(m \, \partial_x \phi) + \widehat{\AA_{0,m}}(\phi)
$$
where $\widehat{\AA_{0,m}}$ satisfies 
$$
\| \widehat{\AA_{0,m}}(\phi) \|_{L^2} \le C \, \|\phi\|_{L^2}
$$
thanks to \eqref{eq-DtoF:ineqcaract}.
Consequently, we have 
$$
\begin{aligned}
\partial_x (\BB_{0,m}^* \phi) &= \BB_{0,m}^* (\partial_x \phi) + \omega \, \CC^1_{\partial_x m} (\phi) + \omega \, \CC^2_{\partial_x m} (\phi) + \partial_x \omega \,\CC^1_m(\phi)  +\partial_x \omega \, \CC^2_m(\phi) \\
&\quad - \partial_x \phi - \partial_x \left( {{x \cdot \nabla m} \over m} \right) \phi - \widehat{\AA_{0,m}} (\phi)
\end{aligned}
$$
and
$$
\begin{aligned}
&\quad \int_{\R^d} \partial_x (\BB_{0,m}^* \phi) \, \partial_x \phi \\
&= \int_{\R^d} \BB_{0,m}^* (\partial_x \phi) \, (\partial_x \phi)
+ \int_{\R^d} \omega \, \CC^1_{\partial_x m} (\phi) \, (\partial_x \phi)
+ \int_{\R^d} \omega \, \CC^2_{\partial_x m} (\phi) \, (\partial_x \phi) \\
&\quad + \int_{\R^d}  \partial_x \omega \,\CC^1_m(\phi) \, (\partial_x \phi) 
+  \int_{\R^d}  \partial_x \omega \,\CC^2_m(\phi) \, (\partial_x \phi)  - \int_{\R^d} (\partial_x \phi)^2 \\
&\quad -  \int_{\R^d} \partial_x \left( {{x \cdot \nabla m} \over m} \right) \, \phi \, (\partial_x \phi)
-  \int_{\R^d} \widehat{\AA_{0,m}} (\phi) \, (\partial_x \phi) \\
&=: J_1 + \dots + J_8.
\end{aligned}
$$
We have from the step 1 of the proof
$$
J_1 \le -{c_0 \over 4} \| \phi\|^2_{\dot{H}^{1 + \alpha/2}} + b_0 \int_{\R^d} (\partial_x \phi)^2.
$$
Moreover, we easily obtain that 
$$
J_6 + J_7+ J_8 \le C \, \left(\int_{\R^d} \phi^2 + \int_{\R^d} (\partial_x \phi)^2 \right).
$$
The term $J_2$ is first separated into two parts:
$$
\begin{aligned}
J_2 &=
 \int_{|z| \le 1} k_0(z) \, \phi(y) \, \TT_{\partial_xm}(x,x+z) \, \omega(x) \, \partial_x \phi(x) \,  dz \, dx  \\
& \quad + \int_{|z| \ge 1} k_0(z)  \phi(y)  \TT_{\partial_xm}(x,x+z)  \, \omega(x)  \partial_x \phi(x) \,  dz \, dx \\
&=: J_{21} + J_{22}
\end{aligned}
$$
where we recall that $\TT_{\partial_x m}$ is defined in~\eqref{eq:psinu}. 
The term $J_{21}$ is treated as $T_{12}$ is the step $1$ of the proof. Concerning $J_{22}$, as for $T_{13}$, we split it into two parts: 
$$
\begin{aligned}
&\quad J_{22} \le \\
& \int_{|z|Â \ge 1} k_0(z) \, |(\partial_x m)(x+z) - (\partial_x m)(x)| \, \omega(x) (\phi^2(x+z) + (\partial_x\phi)^2(x)) \, dx \, dz \\
&\quad +  \int_{1 \le |z|Â \le 2} k_0(z) \, |z| \, |\nabla (\partial_x m)(x)| \, \omega(x) \, (\phi^2(x+z) + (\partial_x\phi)^2(x+z)) \, dx \, dz \\
&\le C \,  \int_{|z|Â \ge 1} k_0(z) \,(\phi^2(x+z) + (\partial_x\phi)^2(x)) \, dx \, dz \\
&\quad+C \,  \int_{1 \le |z|Â \le 2} k_0(z) \, (\phi^2(x+z) + (\partial_x\phi)^2(x+z)) \, dx \, dz,
\end{aligned}
$$
where the second inequality comes from the fact that 
$$
|(\partial_x m)(y) - (\partial_x m)(x)| \, \omega(x) \le C \quad \text{and} \quad  |\nabla (\partial_x m)(x)| \, \omega(x) \le C \quad \forall \, x, \, y \in \R^d
$$
because $q<\alpha/2<1$. 
We hence deduce that 
$$
J_2 \le C \, \left( \int_{\R^d} \phi^2 + \int_{\R^d} (\partial_x \phi)^2 \right).
$$
Concerning $J_3$, we perform a Taylor expansion of $\phi$ and use the fact that $|\nabla (\partial_x m)| \, \omega \in L^\infty(\R^d)$:
\beqn \label{eq-DtoF:J3}
\begin{aligned}
J_3 &= \int_{\R^d \times \R^d} k_0(x-y) \,\int_0^1 (1-t)\,  \nabla \phi(y+t(x-y)) \cdot (y-x) \, dt \, \\
&\qquad \qquad \qquad \qquad \nabla (\partial_x m)(x) \cdot (y-x) \, \chi(x-y) \, \omega(x) \, \partial_x\phi(x) \, dy \, dx \\
&\le C \, \int_{|z| \le 2} k_0(z) \, |z|^2 \, \int_0^1|\nabla \phi(x+tz)|^2 \, dt \, dz \, dx \\
&\quad + \int_{|z| \le 2} k_0(z) \, |z|^2 \, |\partial_x \phi (x)|^2 \, dz \, dx,
\end{aligned}
\eeqn
where we have used Jensen inequality 
and Young inequality. We use a change of variable for the first term of the RHS of \eqref{eq-DtoF:J3}, which implies that 
$$
J_3 \le C \, \|\phi\|^2_{\dot{H}^1}.
$$
We deal with $J_4$ splitting it into two parts ($|x-y|\le 1$ and $|x-y| \ge1$) and using the same method as for $T_{12}$ and $T_{13}$ in the step 1 of the proof, we obtain
$$
J_4 \le C \, \left(\int_{\R^d} \phi^2 + \int_{\R^d} (\partial_x \phi)^2 \right).
$$
To deal with $J_5$, we proceed exactly as for $J_3$ and we obtain 
$$
J_5 \le C \, \|\phi\|^2_{\dot{H}^1}.
$$

Summarizing the previous inequalities and using \eqref{eq-DtoF:interp}, we obtain that for any $\zeta >0$,
$$
\begin{aligned}
&\quad \int_{\R^d} \partial_x (\BB_{0,m}^* \phi) \, \partial_x \phi \le -{c_0 \over 4} \| \phi\|^2_{\dot{H}^{1 + \alpha/2}} + b_1 \left(\|\phi\|^2_{L^2} + \|Â \phi\|^2_{\dot{H}^1} \right)\\
&\le -{c_0 \over 4} \| \phi\|^2_{\dot{H}^{1 + \alpha/2}} + b_1  \left(\|\phi\|^2_{L^2} + K(\zeta) \| \phi\|^2_{\dot{H}^{\alpha/2}} + \zeta \|\phi\|^2_{\dot{H}^{1 + \alpha/2}} \right), \quad b_1 \in \R. 
\end{aligned}
$$
This implies that if $\phi_t$ is the solution of 
$$
\partial_t \phi_t = \BB^*_{0,m} \phi_t, \quad \phi_0=\phi
$$
then
$$
\begin{aligned}
 {1 \over 2} {d \over dt} \Nt \phi_t \Nt_{H^1}^2 
\le &\left(-{c_0 \over 4} + \eta \, b_1 \, K(\zeta) \right) \|\phi_t\|^2_{\dot{H}^{\alpha/2}} \\
&\quad + \eta \left(-{c_0 \over 4} + \zeta \, b_1 \right) \|\phi_t\|^2_{\dot{H}^{1+\alpha/2}} + (b_0 + \eta \, b_1) \|Â \phi_t\|^2_{L^2}.
\end{aligned}
$$
Taking $\zeta$ and $\eta$ small enough, we deduce that 
$$
{1 \over 2} {d \over dt} \Nt \phi_t \Nt^2_{H^1} \le b \, \Nt \phi_t \Nt^2_{H^1},
$$
this concludes the proof in the case $s=1$. 
\end{proof}

We now fix $0<r<\alpha/2$ as in the assumptions of Theorem~\ref{theo-DtoF:DtoF}. We also introduce $r_0 \in (r,\alpha/2)$ and $m_0(x) := \langle x \rangle^{r_0}$.
From Lemma \ref{lem-DtoF:BdissipLp} applied with $p=1$, there exists $a<0$ such that $\BB_\eps - a$ is dissipative in $L^1(m_0)$ for any $\eps \in [0,\eps_1]$ (or equivalently, $\BB_{\eps,m_0} -a$ is dissipative in $L^1$ where $\BB_{\eps,m_0}$ is defined as $\BB_{0,m}$ in \eqref{eq-DtoF:B0m}). From Lemma~\ref{lem-DtoF:BdissipLp} applied with $p=2$, Corollary \ref{cor-DtoF:BdissipHs} and Lemma~\ref{lem-DtoF:BdissipH-s}, there exists $b \in \R$ such that $\BB_\eps -b$ is dissipative in $L^2(m_0)$ for any $\eps \in [0,\eps_1]$ (or equivalently, $\BB_{\eps,m_0} -b$ is dissipative in $L^2$)
and $\BB_{0,m_0} -b$ is hypodissipative in $H^s$ and $H^{-s}$ for any $s \in \N^*$.

We introduce $p_\theta :=  2/(1+\theta)$ and its H\"{o}lder conjugate $p'_\theta := 2/(1-\theta)$ for $\theta \in (0,1)$. We then choose $\theta \in (0,1)$ such that $a_\theta := a \theta + b(1-\theta) <0$, $p'_\theta \in \N$ and $p'_\theta(r_0-r)>d$. We denote 
$$
\begin{aligned}
X_1:=W^{2,p_\theta}(m_0) \subset X_0:=L^{p_\theta}(m_0)\subset X_{-1} := W^{-2,p_\theta}(m_0).
\end{aligned}
$$

\begin{lem} \label{lem-DtoF:BdissipXi}
The operator $\BB_{0} - a_\theta$ is hypodissipative in $X_i$, $i=-1,0,1$ and the operator $\BB_\eps - a_\theta$ is dissipative in $X_0$ for any $\eps \in (0,\eps_1]$.
\end{lem}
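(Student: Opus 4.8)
The plan is to deduce the three claims by complex interpolation, using as endpoints the dissipativity and hypodissipativity facts established just before the statement. First I would conjugate by $m_0$: since every derivative of $m_0=\langle x\rangle^{r_0}$ is bounded by $C\,m_0$, the map $f\mapsto f\,m_0$ is an isomorphism $W^{s,p}(m_0)\to W^{s,p}$ with equivalent norms for $s\in\{-2,0,2\}$, and it intertwines $S_{\BB_\eps}$ with $S_{\BB_{\eps,m_0}}$, where $\BB_{\eps,m_0}(h):=m_0\,\BB_\eps(m_0^{-1}h)$ as in \eqref{eq-DtoF:B0m}. Hence it suffices to show that $\BB_{\eps,m_0}-a_\theta$ is dissipative in $L^{p_\theta}$ for every $\eps\in[0,\eps_1]$, and that $\BB_{0,m_0}-a_\theta$ is hypodissipative in $W^{2,p_\theta}$ and in $W^{-2,p_\theta}$.

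The endpoint estimates I would feed into the interpolation all come from the paragraph above — Lemma~\ref{lem-DtoF:BdissipLp} with $p=1$ and with $p=2$, and Corollary~\ref{cor-DtoF:BdissipHs} and Lemma~\ref{lem-DtoF:BdissipH-s} applied at the integer order $s=p'_\theta$ — and read, uniformly in $\eps\in[0,\eps_1]$ and $t\ge0$,
\[
\|S_{\BB_{\eps,m_0}}(t)\|_{L^1\to L^1}\le e^{at},\qquad \|S_{\BB_{\eps,m_0}}(t)\|_{L^2\to L^2}\le e^{bt},
\]
\[
\|S_{\BB_{0,m_0}}(t)\|_{H^{p'_\theta}\to H^{p'_\theta}}\le C\,e^{bt},\qquad \|S_{\BB_{0,m_0}}(t)\|_{H^{-p'_\theta}\to H^{-p'_\theta}}\le C\,e^{bt}.
\]
Setting $\sigma:=1-\theta\in(0,1)$ one has $1/p_\theta=(1+\theta)/2=(1-\sigma)+\sigma/2$ and $\sigma\,p'_\theta=2$, so the standard complex interpolation identities for Lebesgue and Bessel-potential spaces give $L^{p_\theta}=[L^1,L^2]_\sigma$, $W^{2,p_\theta}=[L^1,H^{p'_\theta}]_\sigma$ and $W^{-2,p_\theta}=[L^1,H^{-p'_\theta}]_\sigma$.

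Since the four semigroups above are the restrictions of one and the same semigroup to the members of each couple, hence consistent on the intersections, the interpolation inequality $\|T\|_{[A_0,A_1]_\sigma}\le\|T\|_{A_0}^{1-\sigma}\|T\|_{A_1}^{\sigma}$ applied with $T=S(t)$ gives
\[
\|S_{\BB_{\eps,m_0}}(t)\|_{L^{p_\theta}\to L^{p_\theta}}\le(e^{at})^{1-\sigma}(e^{bt})^{\sigma}=e^{(a\theta+b(1-\theta))t}=e^{a_\theta t}\quad(\eps\in[0,\eps_1]),
\]
\[
\|S_{\BB_{0,m_0}}(t)\|_{W^{\pm2,p_\theta}\to W^{\pm2,p_\theta}}\le(e^{at})^{1-\sigma}(C\,e^{bt})^{\sigma}\le C\,e^{a_\theta t}.
\]
Undoing the conjugation, the first line says that $\BB_\eps-a_\theta$ is dissipative in $X_0=L^{p_\theta}(m_0)$ for every $\eps\in[0,\eps_1]$ — in particular for $\eps\in(0,\eps_1]$, and for $\eps=0$ it is then a fortiori hypodissipative in $X_0$ — while the second line says that $\BB_0-a_\theta$ is hypodissipative in $X_1=W^{2,p_\theta}(m_0)$ and in $X_{-1}=W^{-2,p_\theta}(m_0)$. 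This is exactly the assertion of the lemma.

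The step I expect to need the most care — the main obstacle — is the interpolation bookkeeping: checking that the weighted Sobolev norms of order $\pm2$ used in the paper are genuinely equivalent to the conjugated unweighted ones, and that complex interpolation with the non-reflexive endpoint $L^1$ does reproduce exactly $W^{2,p_\theta}$ and $W^{-2,p_\theta}$. This is precisely where the two arithmetic constraints imposed on $\theta$ enter: $\sigma\,p'_\theta=2$ fixes the smoothness index of the interpolated space to be $\pm2$, and $p'_\theta\in\N$ is what makes Corollary~\ref{cor-DtoF:BdissipHs} and Lemma~\ref{lem-DtoF:BdissipH-s} available at that order. The rest is a direct transcription of the endpoint bounds through the interpolation inequality.
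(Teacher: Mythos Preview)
Your proposal is correct and follows exactly the same route as the paper: conjugate by $m_0$ and interpolate, using Lemma~\ref{lem-DtoF:BdissipLp} at $p=1$ and $p=2$ for $X_0$, and Lemma~\ref{lem-DtoF:BdissipLp} at $p=1$ together with Corollary~\ref{cor-DtoF:BdissipHs} and Lemma~\ref{lem-DtoF:BdissipH-s} at order $s_0=p'_\theta$ for $X_{\pm 1}$. Your observation that the constraints $p'_\theta\in\N$ and $(1-\theta)p'_\theta=2$ are precisely what make the interpolation land on $W^{\pm 2,p_\theta}$ is the paper's choice of $s_0$ written out explicitly, and your care about the $L^1$ endpoint is a legitimate technical point that the paper itself passes over in silence.
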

\begin{proof}
We prove that $\BB_{0,m_0} - a_\theta$ is hypodissipative in $W^{-2,p_\theta}$, $L^{p_\theta}$ and $W^{2,p_\theta}$ by interpolation. To conclude for $X_0$, we just have to interpolate the results coming from Lemma \ref{lem-DtoF:BdissipLp} with $p=1$ and Lemma \ref{lem-DtoF:BdissipHs} with $s=0$ and use the fact that $\left[L^1,L^2\right]_\theta = L^{p_\theta}$ with 
$1/p_\theta = \theta + (1-\theta)/2$ i.e. $p_\theta = 2/(1+\theta)$. Then, for $X_1$ and $X_{-1}$, we first choose $s_0$ large enough so that $s_0(1-\theta)=2$. We then have $\left[L^1,H^{s_0}\right]_\theta = W^{2,p_\theta}$, $\left[L^1,H^{-s_0}\right]_\theta = W^{-2,p_\theta}$ and we conclude thanks to Lemma \ref{lem-DtoF:BdissipLp} with $p=1$ and Lemma \ref{lem-DtoF:BdissipHs} with $s=s_0$. 

We prove that $\BB_\eps - a_\theta$ is dissipative in $X_0$ exactly in the same way as we proved that $\BB_0 - a_\theta$ is dissipative in $X_0$.
\end{proof}

\smallskip
\subsection{Spectral analysis}
We here divide the proof of Theorem \ref{theo-DtoF:DtoF} into two parts, using Krein Rutman theory for the first part and using both perturbative and enlargement arguments for the second part. 

\smallskip
\noindent {\it Proof of part (1) of Theorem \ref{theo-DtoF:DtoF}.} First, we notice that as in Section \ref{sec:DFP-FP} (Lemmas \ref{lem-DtoC:Kato} and \ref{lem-DtoC:strongPM}), we can prove that the operator $\Lambda_\eps$ satisfies Kato's inequalities, $S_{\Lambda_\eps}$ is a positive semigroup and $(-\Lambda_\eps)$ satisfies a strong maximum principle. Using Krein-Rutman theory, this gives the first part of Theorem~\ref{theo-DtoF:DtoF} i.e. that there exists a unique $G_\eps>0$ such that $\|G_\eps\|_{L^1}=1$, $\Lambda_\eps G_\eps =0$. Moreover, it also implies that $\Pi_\eps f = \langle f \rangle G_\eps$.

\smallskip 
\noindent {\it Proof of part (2) of Theorem \ref{theo-DtoF:DtoF}.} We first develop a perturbative argument which is detailed in what follows, improving a bit similar results presented in~\cite{MM*,Granular-IT*}. We then ends the proof using an enlargement argument. 
\begin{lem} For any $z \in \Omega :=  D_{a_\theta} \backslash\{0\}$ we define the family of operators 
$$
K_{\eps} (z):=  - (\Lambda_{\eps}-\Lambda_0) \, \RR_{\Lambda_0}(z) \, (\AA  \RR_{\BB_\eps} (z)). 
$$
There exists a function $\eta_{2}(\eps) \xrightarrow[\eps \rightarrow 0]{}0$ such that   
\beqn \label{eq-DtoF:Keps}
\| K_{\eps} (z) \|_{\BBB(X_{0})} \le \eta_{2}(\eps) \quad
\forall \, z \in  \Omega_\eps := \Delta_a \backslash \bar B_\eps, \quad  B_{\eps} := B(0,\eta_2(\eps)).
\eeqn
Moreover, there exists $\eps_2 \in(0,\eps_1)$ such that for any $\eps \in (0,\eps_2)$ the operators $I+K_{\eps} (z)$ and  $\Lambda_\eps-z$ 
are invertible for any $z \in \Omega_\eps$ and
$$
\forall \, z \in \Omega_\eps, \quad \RR_{\Lambda_{\eps}}(z) = \UU_\eps (z) \,  (I + K_{\eps}(z))^{-1}
$$
with
$$
\UU_\eps   := \RR_{\BB_\eps }  - \RR_{\Lambda_0} (\AA \, \RR_{\BB_\eps}).
$$
As an immediate consequence, there holds 
$$
\Sigma(\Lambda_{\eps}) \cap  D_{a_\theta} \subset \bar B_\eps .
$$
\end{lem}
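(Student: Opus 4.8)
The plan is to run the perturbative scheme of \cite{MMcmp,Granular-IT*}, comparing $\Lambda_\eps$ to the limit operator $\Lambda_0$ through the factorized resolvent. The computation rests on an algebraic identity: writing $\Lambda_\eps = \BB_\eps + \AA$ and $\Lambda_0 = \BB_0 + \AA$, one has $(\Lambda_\eps - z)\RR_{\BB_\eps}(z) = I + \AA\,\RR_{\BB_\eps}(z)$ and $(\Lambda_\eps - z)\RR_{\Lambda_0}(z) = I + (\Lambda_\eps - \Lambda_0)\,\RR_{\Lambda_0}(z)$; inserting these into $(\Lambda_\eps - z)\UU_\eps(z)$, the two copies of $\AA\,\RR_{\BB_\eps}(z)$ cancel and one is left with $(\Lambda_\eps - z)\,\UU_\eps(z) = I + K_\eps(z)$, with $K_\eps(z)$ exactly the operator in the statement. (All compositions are understood first on the core of smooth compactly supported functions and then extended, using that $\AA\,\RR_{\BB_\eps}(z)$ has range in $X_1$.) Thus the lemma reduces to showing that $K_\eps(z)$ is a small bounded operator on $X_0$, uniformly over $z\in\Omega_\eps$.

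To estimate $K_\eps(z) = -(\Lambda_\eps - \Lambda_0)\,\RR_{\Lambda_0}(z)\,\AA\,\RR_{\BB_\eps}(z)$, I would track the regularity along the composition: $\RR_{\BB_\eps}(z)$ maps $X_0\to X_0$, then $\AA$ maps $X_0\to X_1$, then $\RR_{\Lambda_0}(z)$ maps $X_1\to X_1$, then $\Lambda_\eps-\Lambda_0$ maps $X_1\to X_0$. The first factor is bounded uniformly in $\eps$ and in $z$ over $\overline{D_{a_\theta}}$ by the dissipativity of $\BB_\eps$ in $X_0$ (Lemma~\ref{lem-DtoF:BdissipXi}); the second because $\AA$ has a kernel supported in a fixed compact set (cf.\ \eqref{eq-DtoF:ineqcaract}) and gains two derivatives (Lemma~\ref{lem-DtoF:Abounded}, case $(s,t)=(0,2)$), so $\AA\in\BBB(X_0,X_1)$ despite the weight; the third because $\Lambda_0$ has a spectral gap in $X_1$, $\Sigma(\Lambda_0)\cap D_{a_\theta}=\{0\}$ with $0$ a simple pole (known for the fractional Fokker--Planck operator, cf.\ \cite{FFP-IT}, and compatible with the choice of $a_\theta$), so that $\|\RR_{\Lambda_0}(z)\|_{\BBB(X_1)}\le C/|z|\le C/\eta_2(\eps)$ on $\Omega_\eps=D_{a_\theta}\setminus\bar B_\eps$; and the last by $\eta_1(\eps)\xrightarrow[\eps\to0]{}0$ from Lemma~\ref{lem-DtoF:convergence} (case $s=0$, $p=p_\theta$, $q=r_0<\alpha/2\le\alpha/p_\theta$), recalling $\Lambda_\eps-\Lambda_0=\BB_\eps-\BB_0$. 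Multiplying, $\|K_\eps(z)\|_{\BBB(X_0)}\le C\,\eta_1(\eps)/\eta_2(\eps)$ on all of $\Omega_\eps$; then choosing $\eta_2(\eps):=\sqrt{C\,\eta_1(\eps)}$, which still tends to $0$, yields $\|K_\eps(z)\|_{\BBB(X_0)}\le\eta_2(\eps)$, which is \eqref{eq-DtoF:Keps}.

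For $\eps_2\in(0,\eps_1)$ chosen so that $\eta_2(\eps)<1$ for $\eps<\eps_2$, the operator $I+K_\eps(z)$ is invertible on $X_0$ by a Neumann series, with $\|(I+K_\eps(z))^{-1}\|_{\BBB(X_0)}\le 2$ uniformly on $\Omega_\eps$, and the factorization identity exhibits $\UU_\eps(z)\,(I+K_\eps(z))^{-1}$ as a bounded right inverse of $\Lambda_\eps-z$. To promote this to genuine invertibility and to the resolvent formula I would invoke the splitting structure together with the revisited Weyl/Krein--Rutman theory of \cite{MS,Mbook*} (applicable by Lemmas~\ref{lem-DtoF:Abounded}, \ref{lem-DtoF:BdissipLp} and the $\BB_\eps$-power regularity of $\AA$): on $D_{a_\theta}$ the operator $\Lambda_\eps-z$ is Fredholm of index zero and $\Sigma(\Lambda_\eps)\cap D_{a_\theta}$ is a discrete set of eigenvalues of finite multiplicity. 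Hence right-invertibility forces invertibility, $\RR_{\Lambda_\eps}(z)=\UU_\eps(z)(I+K_\eps(z))^{-1}$ by uniqueness of inverses, and no $z\in\Omega_\eps$ lies in $\Sigma(\Lambda_\eps)$, i.e.\ $\Sigma(\Lambda_\eps)\cap D_{a_\theta}\subset\bar B_\eps$.

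The hard part will be the regularity bookkeeping of the second step: ensuring that the two derivatives lost by $\Lambda_\eps-\Lambda_0$ are exactly compensated by the two derivatives gained through $\AA$, so that the outer resolvent $\RR_{\Lambda_0}(z)$ only has to act boundedly on $X_1$ (it is not asked to regularize) --- this is precisely what dictates the scale $X_{-1}\subset X_0\subset X_1$ and the exponents $p_\theta,p'_\theta$ fixed before the lemma. A secondary difficulty is the calibration $\eta_2(\eps)=\sqrt{C\,\eta_1(\eps)}$, which must absorb the blow-up $1/\eta_2(\eps)$ of $\RR_{\Lambda_0}(z)$ near the eigenvalue $0$ into the smallness $\eta_1(\eps)$ of $\Lambda_\eps-\Lambda_0$, together with the uniformity of every bound in $z$ over the unbounded region $\Omega_\eps$, which relies on strict dissipativity of $\BB_\eps$ slightly to the left of $\Re e\,z=a_\theta$ and on decay of $\RR_{\Lambda_0}(z)$ as $|z|\to\infty$ in $\overline{D_{a_\theta}}$.
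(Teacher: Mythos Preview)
Your argument is correct and essentially reproduces the paper's proof: the algebraic identity $(\Lambda_\eps-z)\,\UU_\eps = I + K_\eps$, the chain of bounds $X_0\to X_0\to X_1\to X_1\to X_0$ with $\|\RR_{\Lambda_0}(z)\|_{\BBB(X_1)}\le C/|z|$, and the calibration $\eta_2(\eps):=\sqrt{C\,C_{a_\theta}\,\eta_1(\eps)}$ are exactly the paper's steps.

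The only place you diverge is the final upgrade from right-invertibility to genuine invertibility. You appeal to Fredholm index zero via the Weyl/Krein--Rutman machinery of \cite{MS,Mbook*}; the paper instead uses a continuation argument: $\Lambda_\eps - z$ is invertible for $\Re e\,z$ large enough (Hille--Yosida, from the dissipativity of $\BB_\eps$ plus boundedness of $\AA$), the candidate inverse $\JJ_\eps(z)=\UU_\eps(z)(I+K_\eps(z))^{-1}$ is locally uniformly bounded on $\Omega_\eps$, and hence invertibility propagates across the connected set $\Omega_\eps$. Both routes are valid; the paper's is more self-contained here because Section~\ref{sec:DFFP-FFP} deliberately avoids establishing the $\BB_\eps$-power regularity of $\AA$ for $\eps>0$ (that is the point of the new perturbative argument), so the full Fredholm structure for $\Lambda_\eps$ has not been set up at this stage and your invocation of it, while morally correct, borrows from machinery the section is trying to bypass.
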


\begin{proof} We know that the operators  $\AA \RR_{\BB_\eps}(z) : X_0 \to X_1$ (from Lemmas \ref{lem-DtoF:Abounded} and \ref{lem-DtoF:BdissipXi}) and $\RR_{\Lambda_0}(z) : X_1 \to X_1$ (previous works from \cite{GMM,MM*}) are bounded for any $z \in \Omega$ and that the operators  $\Lambda_{\eps}-\Lambda_0 : X_{1} \to X_{0}$ are small as $\eps \to 0$ uniformly in $z \in \Omega$ (Lemma \ref{lem-DtoF:convergence}). Because $0$ is a simple eigenvalue, we have 
$$
\| \RR_{\Lambda_0}(z)  \|_{\BBB(X_1)} \le C \, |z|^{-1} \quad \forall \, z \in \Omega.
$$
for some $C > 0$. We introduce the constant $C_{a_\theta}>0$ (coming from Lemmas~\ref{lem-DtoF:Abounded} and \ref{lem-DtoF:BdissipXi}) such that 
$$
\|\AA S_{\BB_\eps}(t)\|_{\BBB(X_0,X_1)} \le C_{a_\theta} \, e^{a_\theta t}.
$$
Defining $\eta_{2}(\eps) := (C \, C_{a_\theta} \, \eta_{1}(\eps))^{1/2} $, we deduce that for any $z \in \Omega_{\eps}$,
\beqn\label{eq-DtoF:Kepseta}
\| K_{\eps} (z) \|_{\BBB(X_{0})} \le \eta_{1}(\eps) \, { C \over \eta_{2}(\eps) } \, C_{a_\theta} = \eta_{2}(\eps). 
\eeqn
We choose $\eps_2>0$ such that $\eta_2(\eps) < 1$ for any $\eps \in (0,\eps_2)$, we thus obtain that $\| K_{\eps}(z) \| < 1$ for any $\eps\in(0,\eps_2)$ and $z \in \Omega_\eps$, which implies that  $I + K_{\eps}(z)$ is invertible.

\smallskip
We compute
\bean
 (\Lambda_{\eps} - z)  \, \UU_\eps 
&=&  (\BB_\eps - z + \AA) \RR_{\BB_\eps }    
-  (\Lambda_{\eps} - \Lambda_0 + \Lambda_0 - z) \RR_{\Lambda_0} \,  \AA \, \RR_{\BB_\eps}   
\\
&=& Id   +  K_{\eps}.
 \eean
For $z \in \Omega_\eps$, $\eps \in (0,\eps_2)$, we denote $ \JJ_{\eps}(z):= \UU_{\eps}(z) \, (I + K_{\eps}(z))^{-1}$, so that  
 $$
 (\Lambda_{\eps} - z) \,  \JJ_{\eps}(z) = Id,
 $$
 which implies that $\Lambda_{\eps} - z$ has a right-inverse  $\JJ_{\eps}(z)$.
 
 \smallskip
 Since $\Lambda_{\eps} - z$ is invertible for $\Re e \, z $ large enough and $\JJ_{\eps}(z)$ is uniformly locally bounded in $\Omega_{\eps}$, we deduce that $\Lambda_{\eps} - z$ is invertible in $\Omega_{\eps}$, and its inverse is its right-inverse $\JJ_{\eps}(z)$. 
\end{proof}

\begin{lem} \label{lem-DtoF:proj}
 Let us denote 
 $$
 \Pi_\eps := {i \over 2\pi} \int_{\Gamma_{\eps}} \RR_{\Lambda_\eps}(z) \, dz, \quad \Gamma_\eps := \{z \in \C: |z| = \eta_2(\eps) \}
 $$
 the spectral projector onto eigenspaces associated to eigenvalues contained in~$\bar B_\eps$. There exists $\eta_3(\eps)$ such that 
 $$
 \|Â \Pi_\eps - \Pi_0 \|_{\BBB(X_0)} \le \eta_3(\eps) \xrightarrow[\eps \rightarrow 0]{}0.
 $$
\end{lem}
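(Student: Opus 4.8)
The plan is to realise both projectors as contour integrals over a single \emph{fixed} circle $\Gamma := \{z \in \C : |z| = r\}$ with $0 < r < |a_\theta|$, and to prove that $\RR_{\Lambda_\eps}(z) \to \RR_{\Lambda_0}(z)$ in $\BBB(X_0)$ uniformly for $z \in \Gamma$. First I would note that $\eta_2(\eps) \to 0$, so for $\eps$ small enough one has $\eps < \eps_2$ and $\eta_2(\eps) < r$; then the foregoing lemma gives $\Sigma(\Lambda_\eps) \cap D_{a_\theta} \subset \bar B(0,\eta_2(\eps))$ and invertibility of $\Lambda_\eps - z$ on $\{\eta_2(\eps) < |z| \le r\} \subset \Omega_\eps$, so by the Cauchy formula $\Gamma$ encircles exactly $\Sigma(\Lambda_\eps) \cap D_{a_\theta}$ and $\Pi_\eps = \frac{i}{2\pi} \int_\Gamma \RR_{\Lambda_\eps}(z)\, dz$; since $\eta_2(0) = \eta_1(0) = 0$ we also get $\Sigma(\Lambda_0) \cap D_{a_\theta} = \{0\}$, hence $\Gamma \subset \rho(\Lambda_0)$ and $\Pi_0 = \frac{i}{2\pi} \int_\Gamma \RR_{\Lambda_0}(z)\, dz$. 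Therefore $\|\Pi_\eps - \Pi_0\|_{\BBB(X_0)} \le \frac{r}{2\pi} \sup_{z \in \Gamma} \|\RR_{\Lambda_\eps}(z) - \RR_{\Lambda_0}(z)\|_{\BBB(X_0)}$.

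Next I would insert the factorisation $\RR_{\Lambda_\eps} = \UU_\eps (I+K_\eps)^{-1}$ of the foregoing lemma (which for $\eps = 0$ reads $\RR_{\Lambda_0} = \UU_0$, since $K_0 \equiv 0$), and use $(I+K_\eps)^{-1} = I - K_\eps(I+K_\eps)^{-1}$ together with $\UU_\eps - \UU_0 = (\RR_{\BB_\eps} - \RR_{\BB_0}) - \RR_{\Lambda_0}\AA(\RR_{\BB_\eps} - \RR_{\BB_0})$ to obtain
\begin{equation*}
\RR_{\Lambda_\eps}(z) - \RR_{\Lambda_0}(z) = \big(\RR_{\BB_\eps}(z) - \RR_{\BB_0}(z)\big) - \RR_{\Lambda_0}(z)\,\AA\,\big(\RR_{\BB_\eps}(z) - \RR_{\BB_0}(z)\big) - \UU_\eps(z)\,K_\eps(z)\,(I+K_\eps(z))^{-1}.
\end{equation*}
The last term is bounded in $\BBB(X_0)$ by $\|\UU_\eps(z)\|\,\|K_\eps(z)\|\,\|(I+K_\eps(z))^{-1}\| \le C\,\eta_2(\eps)$ uniformly on $\Gamma$, using $\|\RR_{\BB_\eps}(z)\|_{\BBB(X_0)} \le (|a_\theta|-r)^{-1}$ (Lemma \ref{lem-DtoF:BdissipXi}) and $\AA \in \BBB(X_0,X_1)$ (Lemma \ref{lem-DtoF:Abounded}) to control $\UU_\eps$. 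For the middle term I would write $\RR_{\BB_\eps} - \RR_{\BB_0} = \RR_{\BB_0}(\BB_\eps - \BB_0)\RR_{\BB_\eps}$ and combine $\RR_{\BB_\eps}(z) \in \BBB(X_0)$, $\|\BB_\eps - \BB_0\|_{\BBB(X_0,X_{-1})} \le \eta_1(\eps)$ (Lemma \ref{lem-DtoF:convergence}, $s=-2$) and $\RR_{\BB_0}(z) \in \BBB(X_{-1})$ (Lemma \ref{lem-DtoF:BdissipXi}) to get $\|\RR_{\BB_\eps}(z) - \RR_{\BB_0}(z)\|_{\BBB(X_0,X_{-1})} \le C\,\eta_1(\eps)$; then $\AA \in \BBB(X_{-1},X_0)$ (Lemma \ref{lem-DtoF:Abounded}) and $\RR_{\Lambda_0}(z) \in \BBB(X_0)$ (holomorphic on the compact $\Gamma$) give a bound $C\,\eta_1(\eps)$ on $\Gamma$.

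The one ingredient that is \emph{not} a pointwise estimate is the first term $\RR_{\BB_\eps}(z) - \RR_{\BB_0}(z)$, which is merely bounded — not small — in $\BBB(X_0)$; the remedy is to observe that $\BB_\eps - a_\theta$ and $\BB_0 - a_\theta$ are (hypo)dissipative in $X_0$, so $\Sigma(\BB_\eps), \Sigma(\BB_0) \subset \{\Re e\, z \le a_\theta\}$ and both $z \mapsto \RR_{\BB_\eps}(z)$ and $z \mapsto \RR_{\BB_0}(z)$ are holomorphic on $\bar B(0,r) \subset D_{a_\theta}$; hence $\int_\Gamma \RR_{\BB_\eps}(z)\,dz = \int_\Gamma \RR_{\BB_0}(z)\,dz = 0$ and this term drops out after integration over $\Gamma$. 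Putting everything together, $\|\Pi_\eps - \Pi_0\|_{\BBB(X_0)} \le C\,r\,(\eta_1(\eps) + \eta_2(\eps)) =: \eta_3(\eps) \xrightarrow[\eps \to 0]{} 0$. I expect the main (though still routine) work to be the bookkeeping of the mapping properties of $\RR_{\BB_\eps}$, $\RR_{\BB_0}$, $\RR_{\Lambda_0}$ and $\AA$ across the triple $X_{-1} \subset X_0 \subset X_1$ and checking that every bound is uniform in $z \in \Gamma$ and in $\eps$; the rest is the Cauchy integral formula.
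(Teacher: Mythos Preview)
Your proof is correct and follows essentially the same route as the paper: the same decomposition $\RR_{\Lambda_\eps}-\RR_{\Lambda_0}$ into a $\RR_{\BB_\eps}-\RR_{\BB_0}$ piece (killed by Cauchy's theorem since $\BB_\eps$ has no spectrum inside the contour), a $\RR_{\Lambda_0}\AA(\RR_{\BB_\eps}-\RR_{\BB_0})$ piece (handled by the resolvent identity and Lemma~\ref{lem-DtoF:convergence}), and a $\UU_\eps K_\eps(I+K_\eps)^{-1}$ piece (small by the preceding lemma). The only difference is cosmetic: the paper integrates over the shrinking circle $\Gamma_\eps$ and balances the contour length $O(\eta_2(\eps))$ against the blow-up $\|\RR_{\Lambda_0}(z)\|=O(\eta_2(\eps)^{-1})$, whereas your choice of a fixed contour $\Gamma$ avoids this bookkeeping.
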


\begin{proof}
First, we have 
$$
\begin{aligned}
\Pi_\eps &={i \over {2\pi}} \int_{\Gamma_\eps} \left\{\RR_{\BB_\eps}(z) - \RR_{\Lambda_0}(z) (\AA \RR_{\BB_\eps}(z)) \right\} (I+K_\eps(z))^{-1} \, dz \\
&= {i \over {2\pi}} \int_{\Gamma_\eps} \RR_{\BB_\eps}(z) \left\{I - K_\eps(z) (I+K_\eps(z))^{-1}\right\} \, dz\\
&\quad - {i \over {2\pi}} \int_{\Gamma_\eps} \RR_{\Lambda_0}(z) (\AA \RR_{\BB_\eps}(z)) \left\{I - K_\eps(z) (I+K_\eps(z))^{-1}\right\} \, dz \\
&= {1 \over {2i\pi}} \int_{\Gamma_\eps}  \RR_{\BB_\eps}(z) K_\eps(z) (I+K_\eps(z))^{-1}\, dz \\
&\quad - {i \over {2\pi}} \int_{\Gamma_\eps} \RR_{\Lambda_0}(z) (\AA \RR_{\BB_\eps}(z)) \left\{I - K_\eps(z) (I+K_\eps(z))^{-1}\right\} \, dz \\
\end{aligned}
$$
and similarly,
$$
\begin{aligned}
\Pi_0 &= {i \over {2\pi}} \int_{\Gamma_\eps} \RR_{\Lambda_0}(z) \, dz \\
&= {i \over {2\pi}} \int_{\Gamma_\eps} \left\{\RR_{\BB_0}(z) - \RR_{\Lambda_0}(z) \, (\AA \RR_{\BB_0}(z)) \right\}\, dz \\
&= {1 \over {2i\pi}} \int_{\Gamma_\eps} \RR_{\Lambda_0}(z) \, (\AA \RR_{\BB_0}(z)) \, dz.
\end{aligned}
$$
Consequently, 
$$
\begin{aligned}
\Pi_0 - \Pi_\eps &= {1 \over {2i\pi}} \int_{\Gamma_\eps} \RR_{\Lambda_0}(z) \, \left\{ \AA \RR_{\BB_0}(z) - \AA \RR_{\BB_\eps}(z) \right\} \, dz \\
&\quad - {1 \over {2i\pi}} \int_{\Gamma_\eps} 
\left\{ \RR_{\BB_\eps}(z) - \RR_{\Lambda_0}(z) \AA \RR_{\BB_\eps}(z) \right\} K_\eps(z) (I+K_\eps(z))^{-1} \, dz \\
&=: T_1 + T_2.
\end{aligned}
$$
Concerning $T_1$, we use the identity 
$$
\AA \RR_{\BB_0}(z) - \AA \RR_{\BB_\eps}(z) = \AA \RR_{\BB_0}(z) (\BB_\eps - \BB_0) \RR_{\BB_\eps}(z)
$$
with Lemmas \ref{lem-DtoF:convergence}, \ref{lem-DtoF:Abounded} and \ref{lem-DtoF:BdissipXi} which imply that 
$$
\RR_{\BB_\eps}(z) \in \BBB(X_0), \, \, \,  \|\BB_\eps - \BB_0\|_{X_0 \to X_{-1}} \le \eta_1(\eps) \xrightarrow[\eps \rightarrow 0]{}0, \, \, \,  \AA \RR_{\BB_0}(z) \in \BBB(X_{-1},X_0).
$$
To treat $T_2$, we use estimate \eqref{eq-DtoF:Keps} on $K_\eps(z)$, the facts that $\RR_{\BB_\eps}(z) \in \BBB(X_0)$ and that we also have $\RR_{\Lambda_0}(z) \AA \RR_{\BB_\eps}(z) \in \BBB(X_0)$. That concludes the proof. 
\end{proof}

 \begin{prop} \label{prop-DtoF:spectrLeps}There exists $\eps_{0} \in (0,\eps_2)$ such that for any $\eps \in (0,\eps_0)$, the following properties hold in $X_0$:
 \begin{enumerate}
 \item $
 \Sigma(\Lambda_\eps) \cap  D_{a_\theta} = \{0\};
 $
 \item for any $f \in X_0$ and any $a>a_\theta$, 
$$
\| S_{\Lambda_\eps}(t) f - G_\eps  \langle f \rangle  \|_{X_0} \leq C_a \, e^{at} \, \| f- G_\eps \langle f \rangle\|_{X_0}, \quad \forall \, t \ge0
$$
for some explicit constant $C_a\ge1$. 
\end{enumerate}
 \end{prop}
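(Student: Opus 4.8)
The plan is to assemble the two lemmas just established — the resolvent factorization $\RR_{\Lambda_\eps}(z)=\UU_\eps(z)(I+K_\eps(z))^{-1}$ with the inclusion $\Sigma(\Lambda_\eps)\cap D_{a_\theta}\subset\bar B_\eps$, and the projector convergence $\|\Pi_\eps-\Pi_0\|_{\BBB(X_0)}\le\eta_3(\eps)\to0$ of Lemma~\ref{lem-DtoF:proj} — together with the Krein--Rutman information of part~(1) and the known spectral picture of $\Lambda_0$ in $X_0$. \emph{Step 1 (location and simplicity of the spectrum).} Recall from \cite{FFP-IT,GMM,MM*} that $0$ is a simple eigenvalue of $\Lambda_0$ in $X_0$ with $\Sigma(\Lambda_0)\cap D_{a_\theta}=\{0\}$ and $\Pi_0 f=\langle f\rangle G_0$. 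Since two bounded projections at distance $<1$ in operator norm have the same rank, Lemma~\ref{lem-DtoF:proj} produces $\eps_0\in(0,\eps_2)$ such that $\dim\mathrm{Range}\,\Pi_\eps=\dim\mathrm{Range}\,\Pi_0=1$ for $\eps\in(0,\eps_0)$. But $\Pi_\eps$ is precisely the spectral projector of $\Lambda_\eps$ onto the eigenvalues lying in $\bar B_\eps$, so those eigenvalues have total algebraic multiplicity $1$; as $0\in\Sigma(\Lambda_\eps)$ with $\Lambda_\eps G_\eps=0$ by part~(1), the value $0$ exhausts this multiplicity, hence it is simple and is the only spectral point of $\Lambda_\eps$ in $\bar B_\eps$. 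Combined with $\Sigma(\Lambda_\eps)\cap D_{a_\theta}\subset\bar B_\eps$ this gives assertion~(1) and the identification $\Pi_\eps f=\langle f\rangle G_\eps$.

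\emph{Step 2 (uniform resolvent bound off the origin).} Next I would fix $a\in(a_\theta,0)$, pick a small $r_1\in(0,|a|)$, and shrink $\eps_0$ so that $\eta_2(\eps)<r_1$ for $\eps\in(0,\eps_0)$. On the region $\Omega_{a,r_1}:=\{z\in\C:\ \Re e\,z\ge a,\ |z|\ge r_1\}$, which is contained in the set $D_{a_\theta}\setminus\bar B_\eps$ where the factorization $\RR_{\Lambda_\eps}(z)=\UU_\eps(z)(I+K_\eps(z))^{-1}$ holds, one has $\|K_\eps(z)\|_{\BBB(X_0)}\le\eta_2(\eps)<1$, while $\UU_\eps=\RR_{\BB_\eps}-\RR_{\Lambda_0}(\AA\,\RR_{\BB_\eps})$ is uniformly bounded in $\BBB(X_0)$: indeed $\RR_{\BB_\eps}(z)\in\BBB(X_0)$ is uniformly bounded on $\{\Re e\,z\ge a\}$ by Lemma~\ref{lem-DtoF:BdissipXi}, $\AA\in\BBB(X_0,X_1)$ by Lemma~\ref{lem-DtoF:Abounded}, and $\RR_{\Lambda_0}(z)\in\BBB(X_1)$ is bounded on $\Omega_{a,r_1}$ by \cite{GMM,MM*}. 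Hence $\sup_{\eps\in(0,\eps_0)}\sup_{z\in\Omega_{a,r_1}}\|\RR_{\Lambda_\eps}(z)\|_{\BBB(X_0)}<\infty$, so on the invariant subspace $(I-\Pi_\eps)X_0$ the resolvent of $\Lambda_\eps$ is holomorphic and uniformly bounded up to the vertical line $\Re e\,z=a$.

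\emph{Step 3 (uniform exponential decay).} Since $\Lambda_\eps\Pi_\eps=0$ and $\Pi_\eps$ commutes with the semigroup, I would split $S_{\Lambda_\eps}(t)=\Pi_\eps+S_{\Lambda_\eps}(t)(I-\Pi_\eps)$ and estimate the second term via the iterated Duhamel/semigroup decomposition for split generators (as in \cite{Mcmp,GMM,Granular-IT*,Mbook*}). The first finitely many iterates are of the form $S_{\BB_\eps}*(\AA S_{\BB_\eps})^{(*j)}$ and decay like $e^{a_\theta t}$ thanks to Lemmas~\ref{lem-DtoF:Abounded} and~\ref{lem-DtoF:BdissipXi} (here $\AA S_{\BB_\eps}(t):X_0\to X_1$ with norm $\le C e^{a_\theta t}$, the regularization being carried by $\AA$ itself); the remaining iterate, whose Laplace transform involves one extra application of $\RR_{\Lambda_\eps}(z)(I-\Pi_\eps)$, is controlled on the line $\Re e\,z=a$ by the uniform bound of Step~2. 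This yields $\|S_{\Lambda_\eps}(t)(I-\Pi_\eps)\|_{\BBB(X_0)}\le C_a\,e^{at}$ uniformly in $\eps\in(0,\eps_0)$ for every $a>a_\theta$. Finally, from $\Pi_\eps G_\eps=G_\eps$ and $\langle G_\eps\rangle=1$ one gets $(I-\Pi_\eps)(f-\langle f\rangle G_\eps)=f-\langle f\rangle G_\eps$ and $S_{\Lambda_\eps}(t)f-\langle f\rangle G_\eps=S_{\Lambda_\eps}(t)(I-\Pi_\eps)\big(f-\langle f\rangle G_\eps\big)$, which is assertion~(2).

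\emph{Main obstacle.} The genuinely delicate point is uniformity in $\eps$: the spectral ``hole'' $\bar B_\eps$ around $0$ collapses as $\eps\to0$ since $\eta_2(\eps)\to0$, so one must be sure the contour in the inverse Laplace representation can be pushed all the way to $\Re e\,z=a$ with constants that do not blow up with $\eps$. This is exactly what the factorization of Step~2 is designed to achieve — it trades the singular factor $1/|z|$ of $\RR_{\Lambda_0}$ near the origin against the smallness $\eta_1(\eps)$ of $\Lambda_\eps-\Lambda_0$ from Lemma~\ref{lem-DtoF:convergence} — but verifying that all the resulting bounds, including the decay in $|\Im m\,z|$ needed for the Laplace inversion, are genuinely uniform in $\eps$ (and that only finitely many Duhamel iterates, independent of $\eps$, are required) is where the real work of the perturbative argument lies.
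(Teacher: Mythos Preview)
Your Step~1 is exactly the paper's argument for assertion~(1): projector convergence from Lemma~\ref{lem-DtoF:proj} forces $\dim\mathrm{R}(\Pi_\eps)=\dim\mathrm{R}(\Pi_0)=1$, and since $0$ is already known to be an eigenvalue from part~(1) of Theorem~\ref{theo-DtoF:DtoF}, it must be the unique (simple) spectral point in $\bar B_\eps$.

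For assertion~(2) the paper takes a shorter path than your Steps~2--3. Rather than establishing uniform resolvent bounds on $\Omega_{a,r_1}$ and running the iterated Duhamel/Laplace-inversion machinery by hand, it invokes the spectral mapping theorem of \cite[Theorem~2.1]{MS} as a black box, simply checking its hypotheses: that $\BB_\eps-a$ is hypodissipative in $X_0$ (hence also in $D(\Lambda_{\eps|X_0})=D(\BB_{\eps|X_0})$), and that $\AA\in\BBB(X_0,W^{2,p_\theta}_1(m))\subset\BBB(X_0,D(\Lambda_{\eps|X_0}))$. Since these inputs hold uniformly in $\eps$ --- Lemma~\ref{lem-DtoF:BdissipXi} for the first, and the fact that $\AA_\eps=\AA_0=\AA$ is $\eps$-independent for the second --- the constant $C_a$ produced by that theorem is uniform as well. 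Your Steps~2--3 are essentially an unpacking of what \cite[Theorem~2.1]{MS} does internally (resolvent factorization, finite Duhamel iteration, contour integral for the remainder), so the difference is one of packaging rather than substance; your ``Main obstacle'' paragraph correctly identifies the uniformity-in-$\eps$ issue, which in the paper's presentation is absorbed into the quantitative nature of the cited abstract theorem.
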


\begin{proof} We know that if $P$ and $Q$ are two projectors s.t. $\|P-Q\|_{\BBB(X_0)} <1$, then their ranges are isomorphic. Lemma \ref{lem-DtoF:proj} thus implies that there exists $\eps_0 \in (0,\eps_1)$ such that for any $\eps \in (0,\eps_0)$,
 $$
 \hbox{\rm dim}\, \mbox{\rm R} (\Pi_\eps)  =  \hbox{\rm dim}\, \mbox{\rm R} (\Pi_0) =1.
 $$
We also know that $0$ is an eigenvalue for $\Lambda_\eps$ (cf. part (1) of Theorem \ref{theo-DtoF:DtoF}). This concludes the proof of the first part of the proposition. 

To get the estimate on the semigroup, we use a spectral mapping  theorem coming from \cite[Theorem~2.1]{MS}. The hypothesis of the theorem are satisfied because $\BB_\eps - a$ is hypodissipative in $X_0$ (and thus in $D(\Lambda_{\eps_{|X_0}}) = D(\BB_{\eps_{|X_0}})$) and $\AA \in \BBB(X_0, W^{2,p_\theta}_1(m))$ (and thus $\AA \in \BBB(X_0,D(\Lambda_{\eps_{|X_0}}))$.
\end{proof}

To conclude the proof of part (2) of Theorem \ref{theo-DtoF:DtoF}, we use the previous Proposition~\ref{prop-DtoF:spectrLeps} combined with an enlargement argument (see \cite{GMM} or \cite[Theorem 1.1]{MM*}): our ``small space'' is $E=L^{p_\theta}_{r_0}$ and our ``large'' space is $\EE=L^1_r$. We then use Lemmas~\ref{lem-DtoF:Abounded} and \ref{lem-DtoF:BdissipLp}-\ref{lem-DtoF:BdissipXi}, and the fact that we clearly have $\AA \in \BBB(\EE,E)$.

\bigskip

\bibliographystyle{acm}

\end{document}